\tikzset{node distance=3cm, auto}
\newtheorem{theorem}{Theorem}[section]
\newtheorem{lemma}[theorem]{Lemma}
\newtheorem{proposition}[theorem]{Proposition}
\theoremstyle{definition}
\newtheorem{definition}[theorem]{Definition}
\newtheorem{example}[theorem]{Example}
\newtheorem{question}[theorem]{Question}
\newtheorem{remark}[theorem]{Remark}
\newtheorem{remarks}[theorem]{Remarks}
\numberwithin{equation}{section}
\newcommand{\pten}{\ensuremath{\widehat{\otimes}_\pi}}
\def\N{\mathbb{N}}
\def\R{\mathbb{R}}
\def\F{\mathcal{F}}
\def\L{\mathcal{L}}
\def\P{\mathcal{P}}
\def\Id{\mathrm{Id}}
\def\Lipo{\mathrm{Lip}_0}
\def\lipo{\mathrm{lip}_0}
\def\Lip{\mathrm{Lip}}
\def\lip{\mathrm{lip}}
\def\sign{\mathrm{sign}}
\begin{document}

\title[Lipschitz interpolating sequences]{Lipschitz interpolating sequences}

\author[A. Jim{\'e}nez-Vargas]{A. Jim{\'e}nez-Vargas}
\address[A. Jim{\'e}nez-Vargas]{Departamento de Matem\'aticas, Universidad de Almer\'ia, Ctra. de Sacramento s/n, 04120 La Ca\~{n}ada de San Urbano, Almer\'ia, Spain.}
\email{ajimenez@ual.es}
\urladdr{\url{https://brujula.ual.es/authors/485.html}}

\author[A. Rueda Zoca]{Abraham Rueda Zoca}
\address[A. Rueda Zoca]{Universidad de Granada, Facultad de Ciencias. Departamento de An\'{a}lisis Matem\'{a}tico, 18071-Granada(Spain)} 
\email{abrahamrueda@ugr.es}
\urladdr{\url{https://arzenglish.wordpress.com}}

\date{\today}

\subjclass[2020]{47A57, 47B37, 26A16, 46B85}


\keywords{Lipschitz function, Lipschitz-free space, interpolating sequence.}

\begin{abstract} 
Let $X$ be a metric space with a base point $0$, and let $\Lipo(X)$ be the Banach space of all Lipschitz functions $f\colon X\to\R$ such that $f(0)=0$. Given a set of points $\left((x_i,y_i)\right)_{i\in I}$ in $X^2$ with $x_i\neq y_i$ for all $i\in I$, we study the following interpolation problem: when for each bounded set $\left(\alpha_i\right)_{i\in I}$ in $\mathbb{R}$ the algorithm  
$$
\frac{f(x_i)-f(y_i)}{d(x_i,y_i)}=\alpha_i\qquad (i\in I)
$$
can be implemented by a function $f\in\Lipo(X)$? Our approach involves the concept of a Beurling set of functions in $\Lipo(X)$ for $\left((x_i,y_i)\right)_{i\in I}$ which has shown to be useful in the so-called transportation problem.
\end{abstract}
\maketitle


\section{Introduction}\label{section 0}

Given a nonempty set $X$, let $A(X)$ be a space of scalar-valued functions defined on $X$. A sequence $\left(x_n\right)_{n\in\N}$ of elements in $X$ is said to be an interpolating sequence for $A(X)$ if for each bounded scalar-valued sequence $\left(\alpha_n\right)_{n\in\N}$, there exists a function $f\in A(X)$ such that $f(x_n)=\alpha_n$ for all $n\in\N$.

The theory of interpolating sequences for spaces of analytic functions plays an important role in the theory of Banach spaces, not only because of the intrinsic beauty of the concept but also because of its applications in different topics such as operator theory, linear systems theory, control theory, function theory, Banach algebras theory and others (see the monographs of Garnett \cite{Gar-81} and Seip \cite{Sei-04}).

Since Carleson \cite{Car-58} characterized the interpolating sequences for the space of bounded holomorphic functions on the complex open unit disk, several authors have studied interpolating
sequences for different function spaces as, for example, spaces of Bloch functions \cite{Ate-92,MadMat-95,MirMal-22}, (weighted) spaces of analytic functions \cite{BlaGalLinMir-19,Muj-91} and uniform algebras \cite{GalGamLin-04,GalLinMir-09}, among others. 

Our aim in this paper is to raise and address a natural problem of interpolation for spaces of Lipschitz functions. Although the aforementioned results are naturally considered for complex-valued functions, we will focus on studying Lipschitz interpolating sets -- rather than sequences -- in the environment of real-valued Lipschitz functions. The main reason for considering only the real case is that our study requires to make use of the rich theory on Lipschitz-free spaces \cite{GodKal-03,Wea-18}, which has been mainly developed in the real setting since McShane extension theorem of Lipschitz functions is norm-preserving in the real-valued case but it is not in the complex-valued one (see \cite[Theorems 1.33 and Corollary 1.34]{Wea-18} for details).

Let $(X,d)$ be a pointed metric space with a base point denoted by $0$, let $E$ be a real Banach space and set $\widetilde{X}=\left\{(x,y)\in X\times X\colon x\neq y\right\}$. The pointed Lipschitz space $\Lipo(X,E)$ is the Banach space of all Lipschitz mappings $f\colon X\to E$ such that $f(0)=0$, endowed with the norm: 
$$
\Vert f\Vert:=\sup\left\{\frac{\left\|f(x)-f(y)\right\|}{d(x,y)}\colon (x,y)\in\widetilde{X}\right\}<\infty.
$$
The pointed little Lipschitz space $\lipo(X,E)$ is the closed subspace of $\Lipo(X,E)$ consisting of all those mappings $f\colon X\to E$ which satisfy the following property:
$$
\forall\varepsilon>0,\quad \exists\delta>0\colon x,y\in X,\, 0<d(x,y)<\delta\quad \Rightarrow\quad \frac{\left\|f(x)-f(y)\right\|}{d(x,y)}<\varepsilon.
$$
In the scalar-valued case, it is usual to write $\Lipo(X)$ and $\lipo(X)$ instead of $\Lipo(X,\R)$ and $\lipo(X,\R)$, respectively. 

For a nonempty set $I$, we denote by $\ell_1(I,E)$ and $\ell_\infty(I,E)$ the Banach spaces of absolutely summable families and bounded families of vectors in $E$, $\alpha=(\alpha_i)_{i\in I}$, under the respective norms: 
\begin{align*}
\left\|\alpha\right\|_1 &:=\sup\left\{\sum_{i\in F}\|\alpha_i\|\colon F\subseteq I, F \text{ finite}\right\},\\
\left\|\alpha\right\|_{\infty} &:=\sup\left\{\|\alpha_i\|\colon i\in I\right\}.
\end{align*}
In the case $E=\mathbb{R}$, we just write $\ell_1(I)$ and $\ell_\infty(I)$, and even $\ell_1$ and $\ell_\infty$ ($\ell^n_1$ and $\ell^n_\infty$) if in addition $I=\N$ (resp. $I=\{1,\ldots,n\})$. Given any $i\in I$, by $e_i$ we mean the element in $\ell_1(I)$ defined as $e_i(j)=\delta_{ij}$ for all $j\in I$, where $\delta_{ij}$ denotes the Kronecker delta of $i,j\in I$.

For real Banach spaces $E$ and $F$, we denote by $\L(E,F)$ the space of all bounded linear operators from $E$ into $F$, equipped with the operator canonical norm. In particular, we write $E^*$ instead of $\L(E,\mathbb{R})$. As usual, $B_E$ and $S_E$ stand for the closed unit ball and the unit sphere of $E$, respectively.

\begin{definition}\label{def-LIC-0}
Let $(X,d)$ be a pointed metric space and let $\left((x_i,y_i)\right)_{i\in I}$ be a set of points in $\widetilde{X}$. Clearly, the so-called Lipschitz interpolating operator associated to $\left((x_i,y_i)\right)_{i\in I}$, $T\colon\Lipo(X)\to\ell_\infty(I)$, given by 
$$
T(f):=\left(\frac{f(x_i)-f(y_i)}{d(x_i,y_i)}\right)_{i\in I}\qquad (f\in\Lipo(X)),
$$
is well-defined, linear and continuous with $0\neq ||T||\leq 1$. 

We will say that $\left((x_i,y_i)\right)_{i\in I}$ is a Lipschitz interpolating set in $\widetilde{X}$ for $\Lip_0(X)$ if for each element $\alpha=(\alpha_i)_{i\in I}\in\ell_\infty(I)$, the interpolation problem
$$
\frac{f(x_i)-f(y_i)}{d(x_i,y_i)}=\alpha_i\qquad (i\in I)
$$
can be solved with a function $f\in\Lip_0(X)$. This means that the operator $T\colon\Lipo(X)\to\ell_\infty$ is surjective and, equivalently, there exists a map $R\colon\ell_\infty\to\Lipo(X)$ such that $T\circ R=\Id_{\ell_\infty}$. In the case that $R\colon\ell_\infty\to\Lipo(X)$ is a bounded linear operator, we will say that $\left((x_i,y_i)\right)_{i\in I}$ is a linear Lipschitz interpolating set for $\Lip_0(X)$.
\end{definition}

If $\left((x_i,y_i)\right)_{i\in I}$ is a Lipschitz interpolating set in $\widetilde{X}$ for $\Lipo(X)$, then the operator $T\colon\Lipo(X)\to\ell_\infty(I)$ is surjective. An application of the open mapping theorem guarantees that there exists a constant $K\geq 1/\|T\|$ such that for each $\alpha\in B_{\ell_\infty(I)}$, we have $T(f)=\alpha$ for some $f\in\Lipo(X)$ with $\|f\|\leq K$. This justifies the introduction of the following constant. 

\begin{definition}\label{def-LIC}
Let $X$ be a pointed metric space and let $\left((x_i,y_i)\right)_{i\in I}$ be a Lipschitz interpolating set in $\widetilde{X}$ for $\Lipo(X)$. We define the Lipschitz interpolation constant for $\left((x_i,y_i)\right)_{i\in I}$ by
$$
M:=\inf\left\{K\geq 1 \;|\; \forall \alpha\in B_{\ell_\infty(I)}, \, \exists f\in K B_{\Lipo(X)} \colon T(f)=\alpha\right\}<\infty.
$$
\end{definition}

The Lipschitz interpolation constant for $\left((x_i,y_i)\right)_{i\in I}$ may admit different descriptions. For example, notice that $M=\|(\tilde{T})^{-1}\|$, where $\tilde{T}\colon \Lipo(X)/\ker(T)\to\ell_\infty(I)$ is the topological isomorphism given by $\tilde{T}(f+\ker(T))=T(f)$. On the other hand, if 
$$
M_\alpha:=\inf\left\{K\geq 1\,|\, \exists f\in KB_{\Lipo(X)} \colon T(f)=\alpha\right\} \qquad (\alpha\in B_{\ell_\infty(I)}),
$$
it is not hard to show that $M=\sup\left\{M_\alpha\colon \alpha\in B_{\ell_\infty(I)}\right\}$. Compare this last supremum to the interpolation constant introduced for bounded holomorphic functions of a sequence in the upper half plane (see \cite[p, 276]{Gar-81}), for elements of a uniform algebra of a sequence in its spectrum \cite{GalGamLin-04,GalLinMir-09}, for weighted analytic functions of a sequence in the unit ball of a Hilbert space \cite{BlaGalLinMir-19}, for (weighted) Bloch functions of a sequence in the complex open unit disk \cite{Ate-92,MirMal-22}, and for Lipschitz functions \cite{Kro-76}.

We now describe the content of this paper. After Section~\ref{section:prel}, where we will introduce some preliminary results, we will analyse in Section~\ref{section:Lipschitzseq} the problem of when a set $\left((x_i,y_i)\right)_{i\in I}$ in $\widetilde{X}$ is Lipschitz interpolating for $\Lipo(X)$. It turns out that the above is equivalent to the fact that the operator $S:\ell_1(I)\to\F(X)$ given by $S(e_i):=m_{x_i,y_i}$ is an into isomorphism (see Theorem~\ref{theo:caraLipschitintergeneral} for details and the role that the Lipschitz interpolation constant plays). In the search of examples of Lipschitz interpolating sets, we introduce in Definition~\ref{def:Berurlingeq} the notion of Beurling set of functions in $\Lipo(X)$ for $\left((x_i,y_i)\right)_{i\in I}$. Such sets of functions have shown its usefulness in the so-called transportation problem (see \cite{OstOst-20,OstOst-24}). Moreover, the existence of such a Beurling set of functions characterizes that the Lipschitz interpolation provides by the set $\left((x_i,y_i)\right)_{i\in I}$ becomes to be linear (see Theorems \ref{rem:levanvarapouloset} and \ref{cor:caravarapoulos0}). 

Among other characterizations (see (Theorem~\ref{cor:caravarapoulos})), we prove that a Lipschitz interpolating set $\left((x_i,y_i)\right)_{i\in I}$ in $\widetilde{X}$ for $\Lipo(X)$ admits a Beurling set of functions in $\Lipo(X)$ if, and only if, there exists an operator $P:\mathcal F(X)\to \ell_1(I)$ such that $P\circ S=\mathrm{Id}_{\ell_1(I)}$ and the norm of $P$ is exactly the same as the Lipschitz interpolation constant of $\left((x_i,y_i)\right)_{i\in I}$. The section also contains a number of examples of Lispchitz interpolating sets admitting a Beurling set of Lipschitz functions. Let us point out, among all the results, Theorem~\ref{theo:ovstroskii}, where we prove that if $\left((x_i,y_i)\right)_{i\in I}$ is a Lipschitz interpolating set whose associate constant is equal to $1$, then it admits a Beurling set of Lipschitz functions. This theorem, which is based on recent results by Ostrovskaa and Ostrovskii \cite{OstOst-24}, can be seen as a slight generalisation of \cite[Theorem 1.3]{OstOst-24}. At the end of the section, we study the separation and the stability of Lipschitz interpolating sets in $\widetilde{X}$ for $\Lipo(X)$ with respect to the so-called Lipschitz-molecular metric.

In Section~\ref{sect:compact}, we study Lipschitz interpolating sequences in the case of compact pointed metric spaces $X$. We prove in Proposition~\ref{prop:dist0interseqcompact} that if $((x_n,y_n))_{n=1}^\infty$ is a Lipschitz interpolating sequence in in $\widetilde{X}$ for $\Lipo(X)$, then $(d(x_n,y_n))_{n=1}^\infty\rightarrow 0$ as $n\to\infty$. Even though the converse is far from being true (Example~\ref{exam:compactnointer}), we prove the following kind of converse -- now without compactness on $X$ --: if $(d(x_n,y_n))_{n=1}^\infty\rightarrow 0$ as $n\to\infty$, then, for every $\varepsilon>0$, there exists a subsequence $((x_{n_k},y_{n_k}))_{k=1}^\infty$ which is Lipschitz interpolating for $\Lipo(X)$ and its Lipschitz interpolating constant is smaller than $1/(1-\varepsilon)$ (Theorem~\ref{theo:suceLorto}). These results are applied in Theorem~\ref{teo-1} to prove that if $\lipo(X)$ separates the points of $X$ uniformly and $((x_n,y_n))_{n=1}^\infty$ is a sequence in $\widetilde{X}$, then the associate Lipschitz interpolating operator $T:\Lipo(X)\to\ell_\infty$ is onto if, and only if, the operator $T_{|\lipo(X)}:\lipo(X)\to c_0$ is so, and both the Lipschitz interpolating constants of $T$ and $T_{|\lipo(X)}$ agree.

In our last Section~\ref{sect:vectorval}, we study Lipschitz interpolating operators from $\Lipo(X,E)$ to $\ell_\infty(I,E)$ for any Banach space $E$ (see Definition~\ref{def:Lipschitvect}). We prove in Theorem~\ref{theo:mainvectorval} that $\left((x_i,y_i)\right)_{i\in I}$ is a Lipschitz interpolating set in $\widetilde{X}$ for $\Lipo(X,E)$ for any Banach space $E$ with a common Lipschitz interpolating constant $M=M_E$ if, and only if, there exists a Beurling set of functions in $\Lipo(X)$ associated to $\left((x_i,y_i)\right)_{i\in I}$. In order to prove it we make use of tensor product theory.

\section{Preliminary results}\label{section:prel}

From now on, unless otherwise stated, $X$ will denote a pointed metric space with a base point represented by $0$.

For each $x\in X$, the evaluation functional $\delta_x\colon\Lipo(X)\to\R$, defined by $\delta_x (f):=f(x)$ for all $f\in\Lipo(X)$, is linear and continuous with $\left\|\delta_x\right\|=d(x,0)$. 

The Lipschitz-free Banach space over $X$, denoted by $\F(X)$, is the closed subspace of $\Lipo(X)^*$ generated by those evaluation functionals, that is,
$$
\F(X):=\overline{\mathrm{span}}\left\{\delta_x\colon x\in X\right\}\subseteq\Lipo(X)^*.
$$
In the theory of Lipschitz functions, $\F(X)$ is also known under the name of Arens--Ells space and is denoted $\AE(X)$ (see \cite[Chapter 3]{Wea-18}). 

According to \cite[Theorem 3.3]{Wea-18}, $\F(X)$ is a predual Banach space of $\Lipo(X)$. Namely, the evaluation map $Q_X\colon\Lipo(X)\to\F(X)^*$ defined by
$$
Q_X(f)(\gamma):=\gamma(f) \qquad \left(f\in\Lipo(X), \; \gamma\in\F(X)\right),
$$
is the natural isometric isomorphism. 

A normalized elementary molecule is an element of $\F(X)$ in the form
$$
m_{x,y}:=\frac{\delta_x-\delta_y}{d(x,y)}\qquad ((x,y)\in\widetilde{X}).
$$
Note that $||m_{x,y}||=1$ by \cite[Lemma 3.5]{Wea-18}, and it is known (see, for example, \cite[Lemma 2.1]{AliPer-20}) that $\F(X)$ consists of all functionals $\gamma\in\Lipo(X)^*$ of the form
$$
\gamma=\sum_{n=1}^\infty\lambda_n m_{x_n,y_n},
$$
with $\left(\lambda_n\right)_{n=1}^\infty\in\ell_1$ and $\left((x_n,y_n)\right)_{n=1}^\infty\subseteq \widetilde{X}$. Moreover,
$$
\left\|\gamma\right\|=\inf\left\{\sum_{n=1}^\infty\left|\lambda_n\right|\colon \gamma=\sum_{n=1}^\infty\lambda_n m_{x_n,y_n},\, \left(\lambda_n\right)_{n=1}^\infty\in\ell_1,\, \left((x_n,y_n)\right)_{n=1}^\infty\subseteq \widetilde{X}\right\}.
$$

By \cite[Theorem 3.6]{Wea-18}, the map $\delta\colon x\mapsto\delta_x$ is an isometric embedding from $X$ into $\Lipo(X)^*$, and the space $\F(X)$ is characterized up to isometric isomorphism by the following universal extension property: for each Banach space $E$ and each map $f\in\Lipo(X,E)$, there exists a unique operator $T_f\in\L(\F(X),E)$ such that $T_f\circ\delta=f$. Furthermore, $||T_f||=\Lip(f)$.


It is said that $\lipo(X)$ separates points uniformly if there exists a constant $a>1$ such that for every $x,y\in X$, some $f\in\lipo(X)$ satisfies $\Lip(f)\leq a$ and $\left|f(x)-f(y)\right|=d(x,y)$.

Assuming that $X$ is a compact pointed metric space and $\lipo(X)$ separates points uniformly, the restriction map $R_X\colon\F(X)\to\lipo(X)^{*}$ defined by
$$
R_X(\gamma)(f):=\gamma(f) \qquad \left(f\in\lipo(X), \; \gamma\in\F(X)\right), 
$$
is an isometric isomorphism by \cite[Theorem 4.38]{Wea-18}. Examples of compact metric spaces $X$ for which $\lipo(X)$ separates the points uniformly are compact spaces of the form $(X,d\circ \omega)$, where $\omega$ is a local distorsion \cite[Proposition 4.14]{Wea-18} or countable compact metric spaces \cite[Theorem 2.1]{dalet15}.

Next, we will establish quantitative versions of a couple of classical results from Functional Analysis, which connect the injectivity and the surjectivity of an operator in terms of its adjoint operator. In the former result, we will use the ideas of \cite[Hint of Exercise 2.39 (i)]{checos}.

\begin{proposition}\label{prop:caralipsconstant}
Let $E,F$ be Banach spaces and let $T\in\L(E,F)$ be such that $T^*$ is onto (equivalently, $T$ is an into isomorphism). Define
\begin{align*}
M&:=\inf\left\{K>0 \,|\, \forall x^*\in B_{E^*}\; \exists y^*\in KB_{F^*} \colon T^*(y^*)=x^*\right\},\\
N&:=\max\left\{J>0 \colon J\Vert x\Vert\leq \Vert T(x)\Vert,\; \forall x\in E\right\}.
\end{align*}
Then $N=\dfrac{1}{M}$.
\end{proposition}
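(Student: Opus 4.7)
The plan is to treat $M$ as the surjection constant of $T^\ast$ and $N$ as the injection modulus of $T$, and to establish the two inequalities $N \geq 1/M$ and $M \leq 1/N$ via straightforward duality arguments (Hahn--Banach in both directions). Note first that $N$ is just $\inf\{\|T(x)\|/\|x\| : x\in E\setminus\{0\}\}$, and this infimum is attained (as a supremum over valid $J$'s) because the inequality $J\|x\|\leq \|T(x)\|$ is closed in $J$; so the use of $\max$ in the statement is justified once we know the set is nonempty, which follows from the hypothesis that $T^\ast$ is surjective (equivalently, $T$ is bounded below by the classical theorem).

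For the inequality $N \geq 1/M$, I would fix $x\in S_E$ and, using Hahn--Banach, choose $x^\ast\in B_{E^\ast}$ with $x^\ast(x)=1$. Given $\varepsilon>0$, the definition of $M$ provides $y^\ast\in F^\ast$ with $T^\ast(y^\ast)=x^\ast$ and $\|y^\ast\|\leq M+\varepsilon$. Then
$$
1 = x^\ast(x) = T^\ast(y^\ast)(x) = y^\ast(T(x)) \leq \|y^\ast\|\,\|T(x)\| \leq (M+\varepsilon)\|T(x)\|,
$$
so $\|T(x)\|\geq 1/(M+\varepsilon)$. Letting $\varepsilon\to 0^+$ and taking the infimum over $x\in S_E$ yields $N\geq 1/M$.

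For the reverse inequality $M\leq 1/N$, I would argue more constructively: given $x^\ast\in B_{E^\ast}$, I use that $T$ is injective (since $N>0$) to define a linear functional $\varphi$ on the subspace $T(E)\subseteq F$ by $\varphi(T(x)):=x^\ast(x)$. The lower bound $N\|x\|\leq \|T(x)\|$ gives
$$
|\varphi(T(x))| = |x^\ast(x)| \leq \|x\| \leq \tfrac{1}{N}\|T(x)\|,
$$
so $\varphi$ is continuous on $T(E)$ with norm at most $1/N$. Applying Hahn--Banach, I extend $\varphi$ to $y^\ast\in F^\ast$ with $\|y^\ast\|\leq 1/N$; by construction $T^\ast(y^\ast)(x)=y^\ast(T(x))=x^\ast(x)$ for every $x\in E$, so $T^\ast(y^\ast)=x^\ast$. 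This shows that $1/N$ is admissible in the definition of $M$, hence $M\leq 1/N$.

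The two steps combine to $N=1/M$. I do not anticipate a genuine obstacle here: the proof is a standard quantitative refinement of the classical equivalence ``$T$ is bounded below $\iff$ $T^\ast$ is surjective'', the only thing requiring a small care is the verification that the supremum defining $N$ is actually attained (so that ``$\max$'' is correct), and that the optimal constant obtained from Hahn--Banach in the second step matches exactly the quantity $M$ defined via the open mapping principle; both are immediate from the displayed chains of inequalities above.
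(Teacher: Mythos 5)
Your proposal is correct and follows essentially the same route as the paper's proof: Hahn--Banach applied to a norming functional for the inequality $N\geq 1/M$, and extension of the functional $x^*\circ T^{-1}$ from the closed subspace $T(E)$ (your $\varphi$ is exactly this functional) for $M\leq 1/N$. The only cosmetic difference is your explicit remark that the maximum defining $N$ is attained, which the paper handles implicitly by exhibiting $1/M$ as a member of the admissible set.
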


\begin{proof} 
Set $x\in E\setminus\{0\}$ and take $x^*\in S_{E^*}$ such that $x^*(x)=\Vert x\Vert$. Take $\varepsilon>0$ and, by the definition of $M$, a functional $y^*\in F^*$ such that $T^*(y^*)=x^*$ and $\Vert y^*\Vert\leq M+\varepsilon$. Now we get
$$
\Vert x\Vert=x^*(x)=T^*(y^*)(x)=y^*(T(x))\leq \Vert y^*\Vert \Vert T(x)\Vert\leq (M+\varepsilon)\Vert T(x)\Vert,
$$
and thus $(1/M)\Vert x\Vert\leq \Vert T(x)\Vert$ holds by the arbitrariness of $\varepsilon>0$. The arbitrariness of $x\in E$ implies that 
$$
\frac{1}{M}\in\left\{J>0\colon J\Vert x\Vert\leq \Vert T(x)\Vert,\; \forall x\in E\right\},
$$
and therefore $1/M\leq N$. 

For the reverse inequality, observe that $N\Vert x\Vert\leq \Vert T(x)\Vert$ for all $x\in E$. Select $x^*\in B_{E^*}$ and let us find $y^*\in F^*$ such that $T^*(y^*)=x^*$ and $\Vert y^*\Vert\leq 1/N$. To do so, take $T(E)\subseteq F$, which is a closed subspace since $T$ is an into isomorphism. Consider $T^{-1}\colon T(E)\to E$, which satisfies $\Vert T^{-1}\Vert\leq 1/N$. Consider now the bounded linear functional $x^*\circ T^{-1}\colon T(E)\to\mathbb R$ and note that $\Vert x^*\circ T^{-1}\Vert\leq 1/N$. By Hahn--Banach Theorem there exists $y^*\in F^*$ with $\Vert y^*\Vert \leq 1/N$ and $y^*=x^*\circ T^{-1}$ on $T(E)$. Note that $T^*(y^*)=x^*$ since 
$$
T^*(y^*)(x)=y^*(T(x))=(x^*\circ T^{-1})(T(x))=x^*(x)
$$ 
for all $x\in E$. Consequently, we have proved that 
$$
\frac{1}{N}\in\left\{K>0 \,|\, \forall x^*\in B_{E^*},\; \exists y^*\in KB_{F^*} \colon T^*(y^*)=x^*\right\}.
$$
Hence, by the definition of $M$, we deduce that $M\leq 1/N$ or, equivalently, $N\leq 1/M$.
\end{proof}

\begin{remark}\label{rem:constaopeattaindual}
In the above proof we have proved indeed that the infimum
$$M:=\inf\left\{K>0 \,|\, \forall x^*\in B_{E^*}\; \exists y^*\in KB_{F^*} \colon T^*(y^*)=x^*\right\}$$
is attained since $T^*$ is an adjoint operator, a fact which is well known in Banach space theory.
\end{remark}

The latter result can be seen as a dual version of Proposition~\ref{prop:caralipsconstant}. This time we will follow some ideas from \cite[Hint of Exercise 2.39 (ii)]{checos}.

\begin{proposition}\label{prop:caralipsconstantdual}
Let $E,F$ be Banach spaces and let $T\in\L(E,F)$ be an onto operator (equivalently, $T^*$ is an into isomorphism). Define
\begin{align*}
M&:=\inf\left\{K>0 \,|\, \forall y\in B_{F},\; \exists x\in KB_{E} \colon T(x)=y\right\},\\
N&:=\max\left\{J>0 \colon J\Vert y^*\Vert\leq \Vert T^*(y^*)\Vert,\; \forall y^*\in F^*\right\}.
\end{align*}
Then $N=\dfrac{1}{M}$.
\end{proposition}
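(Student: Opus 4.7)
The plan is to establish the equality $N = 1/M$ by proving both inequalities $1/M \leq N$ and $M \leq 1/N$, paralleling the structure of Proposition~\ref{prop:caralipsconstant} but now with the roles of $T$ and $T^*$ swapped. The first inequality will be essentially immediate from the definitions, while the second will require a Hahn--Banach separation followed by the classical iterative open-mapping trick.

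For the inequality $1/M \leq N$, I would fix $y^* \in F^*$ and $y \in B_F$. By the definition of $M$, for each $\varepsilon > 0$ there exists $x \in (M+\varepsilon)B_E$ with $T(x) = y$. Then
\[
|y^*(y)| = |y^*(T(x))| = |T^*(y^*)(x)| \leq (M+\varepsilon)\,\|T^*(y^*)\|.
\]
Taking the supremum over $y \in B_F$ and letting $\varepsilon \to 0$ gives $(1/M)\|y^*\| \leq \|T^*(y^*)\|$, so $1/M$ belongs to the set whose maximum defines $N$, hence $1/M \leq N$.

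For the reverse inequality $M \leq 1/N$, the strategy splits into two steps. First, I would show that $NB_F \subseteq \overline{T(B_E)}$ by contradiction: if some $y_0 \in NB_F$ were outside $\overline{T(B_E)}$, a Hahn--Banach separation of $y_0$ from the closed convex set $\overline{T(B_E)}$ would yield $y^* \in F^*$ with
\[
\|T^*(y^*)\| = \sup_{x \in B_E} y^*(T(x)) < y^*(y_0) \leq N\|y^*\|,
\]
contradicting the definition of $N$. Second, I would remove the closure via the standard geometric-series iteration: given $y \in NB_F$ and $\varepsilon \in (0,1)$, pick $x_1 \in B_E$ with $\|y - T(x_1)\| < \varepsilon N$, then $x_2 \in \varepsilon B_E$ with $\|y - T(x_1) - T(x_2)\| < \varepsilon^2 N$, and so on, producing $x = \sum_n x_n$ with $T(x) = y$ and $\|x\| \leq 1/(1-\varepsilon)$. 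Rescaling by $1/N$, every $y \in B_F$ has a preimage of norm at most $1/(N(1-\varepsilon))$, so $M \leq 1/(N(1-\varepsilon))$, and letting $\varepsilon \to 0$ completes the proof.

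The main obstacle is the second direction, specifically the passage from $NB_F \subseteq \overline{T(B_E)}$ to an actual inclusion (up to an arbitrarily small loss). The Hahn--Banach separation step is straightforward once one recalls that $\overline{T(B_E)}$ is closed, convex, and balanced; the delicate part is the iterative construction that uses the geometric sum $\sum_n \varepsilon^n$ to produce an exact preimage with controlled norm, mimicking Banach's original argument for the open mapping theorem.
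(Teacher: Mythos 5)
Your proof is correct, and your easy direction ($1/M \le N$) is essentially the paper's. For the hard direction ($M \le 1/N$) you take a genuinely different, and more classical, route. The paper argues by contradiction: assuming $M>1/(N-\varepsilon)$, it fixes a point $y\in B_F$ all of whose preimages have norm exceeding $1/(N-\varepsilon)$ and then, using the a priori finite surjectivity constant $M+1$ together with a two-parameter perturbation argument (the auxiliary $\eta$ and the shift from $\varepsilon$ to $\varepsilon/2$), shows that $y\notin (N-\varepsilon/2)^{-1}\overline{T(B_E)}$ before applying Hahn--Banach separation. You instead prove directly that $NB_F\subseteq\overline{T(B_E)}$ by a single separation (valid because $\overline{T(B_E)}$ is closed, convex and balanced, so $\sup_{x\in B_E}y^*(T(x))=\Vert T^*(y^*)\Vert$), and then remove the closure with the geometric-series iteration from the proof of the open mapping theorem, obtaining $\overline{T(B_E)}\subseteq (1-\varepsilon)^{-1}T(B_E)$ and hence $M\le 1/(N(1-\varepsilon))$ for every $\varepsilon\in(0,1)$. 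Your version is shorter, avoids the contradiction scaffolding, and its hard direction does not presuppose $M<\infty$; the paper's version avoids the infinite-series construction but must bootstrap through the qualitative open mapping theorem via the constant $M+1$. Both arguments lose an arbitrarily small factor when passing from $\overline{T(B_E)}$ to $T(B_E)$, which is consistent with Remark~\ref{non-proximinalremark} that the infimum defining $M$ need not be attained.
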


\begin{proof}
Let us prove first that $N\geq 1/M$, whose proof will follow the lines of Proposition~\ref{prop:caralipsconstant}.

Let $y^*\in F^*\setminus\{0\}$ and $\varepsilon>0$. We can take $y\in S_F$ such that $y^*(y)>(1-\varepsilon)\Vert y^*\Vert$. By the property defining $M$, we can find $x\in E$ such that $T(x)=y$ and $\Vert x\Vert\leq M+\varepsilon$. Thus
$$
(1-\varepsilon)\Vert y^*\Vert<y^*(y)=y^*(T(x))=T^*(y^*)(x)\leq \Vert T^*(y^*)\Vert \Vert x\Vert\leq (M+\varepsilon)\Vert T^*(y^*)\Vert.
$$
So $((1-\varepsilon)/(M+\varepsilon))\Vert y^*\Vert\leq \Vert T^*(y^*)\Vert$ for every $y^*\in F^*$. Then the definition of $N$ yields $((1-\varepsilon)/(M+\varepsilon))\leq N$ for all $\varepsilon>0$, and so $M\geq 1/N$. 

In order to prove the equality, assume by contradiction that $M>1/N$, and find $\varepsilon>0$ such that $M>1/(N-\varepsilon)$. Consequently, we can take a point $y\in B_F$ satisfying the condition:
$$
T(x)=y \text{ for some }x\in E \quad \Rightarrow \quad \Vert x\Vert>\frac{1}{N-\varepsilon}.
$$
Given $\eta>0$ small enough so that 
$$
\frac{1}{N-\frac{\varepsilon}{2}}<\frac{1}{N-\varepsilon}-(M+1)\eta,
$$
we claim that 
$$
\left(y+\eta B_F\right)\cap \left(\frac{1}{N-\varepsilon}-(M+1)\eta\right)T(B_E)=\emptyset.
$$
Indeed, given $y'\in B_F$, take $x\in E$ such that $T(x)=y+\eta y'$. By the property defining $M$, there exists $x'\in E$ such that $T(x')=y'$ and $\Vert x'\Vert\leq M+1$. Now $y=T(x-\eta x')$ and so $\Vert x-\eta x'\Vert>1/(N-\varepsilon)$. Hence
$$
\Vert x\Vert>\frac{1}{N-\varepsilon}-\eta\Vert x'\Vert\geq \frac{1}{N-\varepsilon}-(M+1)\eta.
$$
This shows that $y+\eta y'\notin(1/(N-\varepsilon)-(M+1)\eta)T(B_E)$, and the arbitrariness of $y'\in B_F$ proves our claim. 

We now prove that $y\notin (1/(N-\varepsilon/2))\overline{T(B_E)}$. Otherwise, we could find a sequence $(y_n)_{n=1}^\infty$ in $(1/(N-\varepsilon/2))B_E$ such that $(T(y_n))_{n=1}^\infty$ converges to $y$. In light of our claim, note that $T(y_n)\neq y$ for all $n\in\N$ since $y_n\in(1/(N-\varepsilon)-(M+1)\eta)B_E$ for all $n\in\N$. Hence, for each $n\in\N$, we can take a point $x'_n\in E$ with $\left\|x_n'\right\|\leq (M+1)$ such that $T(\left\|y-T(y_n)\right\|x'_n)=y-T(y_n)$, and thus $T(y_n+||y-T(y_n)||x'_n)=y$. Write $v_n=y_n+||y-T(y_n)||x'_n$ and note that  
$$
\left\|v_n\right\|\leq \frac{1}{N-\frac{\varepsilon}{2}}+\left\|y-T(y_n)\right\|(M+1)\rightarrow\frac{1}{N-\frac{\varepsilon}{2}}\qquad (n\to+\infty).
$$
Since $1/(N-\varepsilon/2)<1/(N-\varepsilon)-(M+1)\eta$, we can find some $n\in\N$ for which $||v_n||<1/(N-\varepsilon)-(M+1)\eta$ and $T(v_n)=y$, and this contradicts the condition that $y$ must satisfy. 

Finally, by Hahn--Banach Theorem we can find $y^*\in F^*$ and $\beta\in\R$ such that
$$
\sup_{x\in B_E}\frac{1}{N-\frac{\varepsilon}{2}} y^*(T(x))<\beta\leq y^*(y)\leq \Vert y^*\Vert \Vert y\Vert\leq\Vert y^*\Vert,
$$
and thus 
$$
\Vert T^*(y^*)\Vert=\sup_{x\in B_E}T^*(y^*)(x)=\sup_{x\in B_E} y^*(T(x))<
\left(N-\frac{\varepsilon}{2}\right)\Vert y^*\Vert,
$$
which entails in a contradiction with the property defining $N$. This proves that $M=1/N$ as desired.
\end{proof}

\begin{remark}\label{non-proximinalremark}
In Proposition \ref{prop:caralipsconstantdual}, we can not guarantee that the infimum $M$ is indeed attained. 
\end{remark}

Our study will also require a brief appeal to the theory of tensor products. Let us recall that the projective tensor product of Banach spaces $E$ and $F$, denoted by $E \pten F$, is the completion of the space $E\otimes F$, endowed with the projective norm  
\begin{align*}
\pi(z) &= \inf \left\{ \sum_{n=1}^{\infty} \|x_n\| \|y_n\|: \sum_{n=1}^{\infty} \|x_n\| \|y_n\| < \infty, z = \sum_{n=1}^{\infty} x_n \otimes y_n \right\} \\
&= \inf \left\{ \sum_{n=1}^{\infty} |\lambda_n|: z = \sum_{n=1}^{\infty} \lambda_n x_n \otimes y_n, \sum_{n=1}^{\infty} |\lambda_n| < \infty, \|x_n\| = \|y_n\| = 1 \right\},
\end{align*}
where the infinum is taken over all such representations of $z$. It is well-known that $\pi(x\otimes y)= \|x\| \|y\|$ for every $x \otimes y\in E\pten F$, and the closed unit ball of $E \pten F$ is the closed convex hull of the set $B_E \otimes B_F := \{ x \otimes y\colon x \in B_E,\, y \in B_F \}$. We will also apply the identification $(E \pten F)^*\cong\L(E,F^*)$, where the action of an operator $T\in\L(E,F^*)$ as a linear functional on $E\pten F$ is given by
$$
T\left(\sum_{n=1}^{\infty} x_n \otimes y_n \right) = \sum_{n=1}^{\infty} T(x_n)(y_n),
$$
for every $\sum_{n=1}^{\infty} x_n \otimes y_n \in E\pten F$. The reader is referred to Chapter 2 of the excellent book \cite{ryan} for more background on projective tensor products.


\section{Lipschitz interpolating sets}\label{section:Lipschitzseq}

Let us begin with some basic examples of Lipschitz interpolating finite sets.

\begin{example}\label{ex-0}
Let $X$ be a pointed metric space with at least two points. If $(x,y)\in\widetilde{X}$, for each $\alpha\in\R$, the function $f\colon X\to\R$, given by 
$$
f(z)=\alpha(d(z,y)-d(0,y))\qquad (z\in X),
$$
belongs to $\Lipo(X)$ with $\left\|f\right\|=\left|\alpha\right|$ and $\displaystyle\frac{f(x)-f(y)}{d(x,y)}=\alpha$. 
\end{example}

In the case of a set $\left((x_i,y_i)\right)_{i\in I}$ with $\#(I)\geq 2$, the following example shows that the conditions:
\begin{enumerate}
	\item $\{x_i,y_i\}\cap \{x_j,y_j\}=\emptyset$ whenever $i,j\in I$ and $i\neq j$,
	\item $x_i\neq 0\neq y_i$ for all $i\in I$,
\end{enumerate}
are relevant for the set $\left((x_i,y_i)\right)_{i\in I}$ in $\widetilde{X}$ to be Lipschitz interpolating for $\Lipo(X)$.

\begin{example}\label{ex-2}
Take any pointed metric space $X$ with at least three points and select $x,y,0\in X$ different from each other. Then the set $\left\{(x,0),(y,0),(x,y)\right\}$ is not Lipschitz interpolating for $\Lipo(X)$ since the operator $T:\Lipo(X)\to \ell_\infty^3$ defined by
$$
T(f)=\left(\frac{f(x)}{d(x,0)}, \frac{f(y)}{d(y,0)},\frac{f(x)-f(y)}{d(x,y)} \right),
$$
is not onto. Observe that there is no $f$ such that $T(f)=(1,0,0)$. 
\end{example}

In light of Example \ref{ex-2}, we proceed to generalize Example \ref{ex-0} as follows.

\begin{example}\label{ex-3}
Let $X$ be a pointed metric space and assume that $\left((x_i,y_i)\right)_{i=1}^n$ for some $n\in\N$ is a finite sequence in $\widetilde{X}$ such that $\{x_i,y_i\}\cap \{x_j,y_j\}=\emptyset$ if $i,j\in\{1,\ldots,n\}$ and $i\neq j$, and $x_i\neq 0\neq y_i$ for all $i\in\{1,\ldots,n\}$. Consider the set 
$$
X_n=\{x_i\colon i=1,\ldots,n\}\cup\{0\}\cup \{y_i\colon i=1,\ldots,n\}.
$$
Given $\left(\alpha_i\right)_{i=1}^n$ in $\R$, for $i=1,\ldots,n$ define the function $g_i\colon X_n\to\R$ by 
$$
g_i(z)=\left\{
\begin{array}{lll}
         \alpha_id(z,y_i)& \text{ if } & z=x_i,\\
         &           &   \\
	0& \ & \text{ otherwise.}
	\end{array}
\right.
$$
Clearly, $g_i\in\Lipo(X_n)$ with 
$$
\left\|g_i\right\|=\frac{\left|\alpha_i\right|d(x_i,y_i)}{d(x_i,X_n\setminus\{x_i\})}.
$$
An application of McShane extension Theorem (see, for example, \cite[Theorem 1.33]{Wea-18}) yields a function $f_i\in\Lipo(X)$ such that $f_i|_{X_n}=g_i$ and $\left\|f_i\right\|=\left\|g_i\right\|$. Clearly, the function $f=\sum_{i=1}^nf_i$ is in $\Lipo(X)$ with 
$$
\left\|f\right\|\leq \sum_{i=1}^n\frac{\left|\alpha_i\right|d(x_i,y_i)}{d(x_i,X_n\setminus\{x_i\})},
$$
and since the supports of the functions $g_i$ are pairwise disjoint, we conclude that 
$$
\left(\displaystyle\frac{f(x_i)-f(y_i)}{d(x_i,y_i)}\right)_{i=1}^n=\left(\alpha_i\right)_{i=1}^n.
$$
\end{example}

For infinite metric spaces, more examples of non Lipschitz interpolating sequences can be given:

\begin{example}
Consider a pointed metric space $X$ containing a sequence $(x_n)_{n=1}^\infty$ of pairwise distinct points in $X\setminus\{0\}$ that converges to some $x\in X\setminus\{0\}$. Then the sequence $\left((x_n,0)\right)_{n=1}^\infty$ is not Lipschitz interpolating for $\Lipo(X)$. Indeed, otherwise, we could have a function $f\in\Lipo(X)$ such that  
$$
\left(\frac{f(x_n)}{d(x_n,0)}\right)_{n=1}^\infty=\left((-1)^n\right)_{n=1}^\infty .
$$
Hence $f(x_{2n})=d(x_{2n},0)$ and $f(x_{2n+1})=-d(x_{2n+1},0)$ for all $n\in\N$, taking limits with $n\to\infty$ yields $f(x)=d(x,0)=-d(x,0)$, and thus $x=0$, a contradiction.
\end{example}

In order to find an explanation for the above behaviour, let us take a more global vision on Lipschitz interpolating operators.

Given a set $\left((x_i,y_i)\right)_{i\in I}$ in $\widetilde{X}$, its associate Lipschitz interpolating operator $T:\Lipo(X)\to\ell_\infty(I)$ is an adjoint operator. Indeed, an easy verification yields the following result.

\begin{proposition}\label{prop-new}
Let $\left((x_i,y_i)\right)_{i\in I}$ be a set in $\widetilde{X}$. Then the operator $S:\ell_1(I)\to\mathcal F(X)$ defined by
$$
S(\lambda)=\sum_{i\in I}\lambda_im_{x_i,y_i}\qquad (\lambda=(\lambda_i)_{i\in I}\in\ell_1(I)),
$$
is linear and continuous with $\left\|S\right\|\leq 1$, and $S^*=T$ (up to the identifications $\mathcal F(X)^*\cong\Lipo(X)$ and $\ell_1(I)^*\cong\ell_\infty(I)$). $\hfill \square$
\end{proposition}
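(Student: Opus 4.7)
The plan is to verify each claim directly from the definitions, after making sure the formula defining $S$ makes sense on all of $\ell_1(I)$.

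First I would check well-definedness and the norm bound simultaneously. Since $\|m_{x_i,y_i}\|=1$ in $\F(X)$ by \cite[Lemma 3.5]{Wea-18}, for any $\lambda=(\lambda_i)_{i\in I}\in\ell_1(I)$ the family $(\lambda_i m_{x_i,y_i})_{i\in I}$ is absolutely summable in $\F(X)$, because the only $i\in I$ with $\lambda_i\neq 0$ form a countable set and
$$
\sum_{i\in I}\|\lambda_i m_{x_i,y_i}\|=\sum_{i\in I}|\lambda_i|=\|\lambda\|_1<\infty.
$$
By completeness of $\F(X)$ the sum $S(\lambda):=\sum_{i\in I}\lambda_i m_{x_i,y_i}$ converges unconditionally, and the same estimate gives $\|S(\lambda)\|\leq\|\lambda\|_1$, hence $\|S\|\leq 1$. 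Linearity of $S$ is immediate from the unconditional convergence.

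Next I would identify the adjoint. Under the isometric isomorphism $Q_X\colon\Lipo(X)\to\F(X)^*$ and the canonical duality $\ell_1(I)^*\cong\ell_\infty(I)$, given $f\in\Lipo(X)$ and $\lambda\in\ell_1(I)$, the continuity of the evaluation functional $Q_X(f)$ allows us to interchange it with the series:
$$
S^*(Q_X(f))(\lambda)=Q_X(f)(S(\lambda))=\sum_{i\in I}\lambda_i\,Q_X(f)(m_{x_i,y_i})=\sum_{i\in I}\lambda_i\,\frac{f(x_i)-f(y_i)}{d(x_i,y_i)}.
$$
The last expression is precisely the evaluation of the element $\left(\frac{f(x_i)-f(y_i)}{d(x_i,y_i)}\right)_{i\in I}=T(f)\in\ell_\infty(I)$ on $\lambda\in\ell_1(I)$. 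Since this holds for every $\lambda\in\ell_1(I)$, we conclude $S^*(f)=T(f)$ under the stated identifications, and thus $S^*=T$.

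There is no real obstacle here; the only point deserving mild care is the justification of the interchange of sum and functional, which is guaranteed by absolute convergence of the series in $\F(X)$ and the continuity of $Q_X(f)$. Once $S^*=T$ is established, the inequality $\|S\|\leq 1$ also follows a posteriori from $\|T\|\leq 1$ (already observed in Definition~\ref{def-LIC-0}) via $\|S\|=\|S^*\|=\|T\|$, but I would prefer to give the direct triangle inequality argument above since it needs no duality.
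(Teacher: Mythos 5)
Your proof is correct and is precisely the ``easy verification'' that the paper leaves to the reader (no proof is printed for this proposition): absolute summability of $(\lambda_i m_{x_i,y_i})_{i\in I}$ from $\|m_{x_i,y_i}\|=1$, the resulting bound $\|S\|\leq 1$, and the computation of $S^*$ against $Q_X(f)$ via the identifications $\F(X)^*\cong\Lipo(X)$ and $\ell_1(I)^*\cong\ell_\infty(I)$. The justification of the interchange of sum and functional, and the remark that countably many coordinates suffice, are exactly the points worth making explicit, so nothing is missing.
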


If in addition the set $\left((x_i,y_i)\right)_{i\in I}$ is Lipschitz interpolating for $\Lip_0(X)$ with Lipschitz interpolation constant $M$, then Proposition~\ref{prop:caralipsconstant} shows that $(1/M)\left\|\lambda\right\|_1\leq\left\|S(\lambda)\right\|$ for all $\lambda\in \ell_1(I)$. Moreover, $1/M$ is the biggest positive constant for which the preceding inequality holds. Taking also into account Remark~\ref{rem:constaopeattaindual}, the following result is proved.

\begin{theorem}\label{theo:caraLipschitintergeneral}
Let $\left((x_i,y_i)\right)_{i\in I}$ be a set in $\widetilde{X}$ and let $M\geq 1$. The following statements are equivalent:
\begin{enumerate}
\item The set $\left((x_i,y_i)\right)_{i\in I}$ is Lipschitz interpolating for $\Lipo(X)$ and its Lipschitz interpolating constant is $M$.
\item The operator $S:\ell_1(I)\to\mathcal F(X)$ defined by $S(e_i)=m_{x_i,y_i}$ is an into isomorphism and
$$
\frac{1}{M}=\max\{J>0\colon J\Vert \lambda\Vert_1\leq \Vert S(\lambda)\Vert,\; \forall \lambda\in\ell_1(I)\}.
$$
As a consequence, $M:=\inf\{K\geq 1: \forall \alpha\in B_{\ell_\infty(I)}, \exists\ f\in KB_{\Lipo(X)}: T(f)=\alpha\}$. $\hfill \square$
\end{enumerate}
\end{theorem}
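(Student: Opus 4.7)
The plan is to derive the theorem as a direct consequence of the identification $S^{*}=T$ from Proposition~\ref{prop-new}, combined with Proposition~\ref{prop:caralipsconstant} applied to the operator $S:\ell_{1}(I)\to\mathcal{F}(X)$. First, Proposition~\ref{prop-new} gives $S^{*}=T$ under the canonical identifications $\mathcal{F}(X)^{*}\cong\Lipo(X)$ and $\ell_{1}(I)^{*}\cong\ell_{\infty}(I)$. Recalling the classical duality result that a bounded operator between Banach spaces is an into isomorphism (equivalently, bounded below) if and only if its adjoint is surjective, the identification $S^{*}=T$ yields at once the qualitative equivalence: the set $((x_{i},y_{i}))_{i\in I}$ is Lipschitz interpolating for $\Lipo(X)$, i.e.\ $T$ is onto, if and only if $S$ is an into isomorphism.

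For the quantitative part, I apply Proposition~\ref{prop:caralipsconstant} with $E=\ell_{1}(I)$ and $F=\mathcal{F}(X)$. Using $S^{*}=T$, the constant $M$ appearing in that proposition specialises to
\[
\inf\{K>0 : \forall \alpha\in B_{\ell_{\infty}(I)},\ \exists f\in KB_{\Lipo(X)}\ \text{with}\ T(f)=\alpha\},
\]
which is exactly the Lipschitz interpolation constant from Definition~\ref{def-LIC} (and is attained, by Remark~\ref{rem:constaopeattaindual}). Likewise, the constant $N$ of the proposition specialises to
\[
\max\{J>0 : J\|\lambda\|_{1}\leq \|S(\lambda)\|,\ \forall \lambda\in\ell_{1}(I)\},
\]
and Proposition~\ref{prop:caralipsconstant} gives $N=1/M$. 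This is precisely the equality stated in (2), so (1) and (2) are equivalent; the trailing ``as a consequence'' clause in the statement then merely rewrites Definition~\ref{def-LIC}.

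Since the heavy lifting has already been carried out in the preparatory propositions, I do not expect any substantial obstacle in this proof: the argument amounts to careful bookkeeping of canonical identifications and attained constants. The only point worth a brief explicit check is that the infimum defining the Lipschitz interpolation constant is actually attained, which follows from Remark~\ref{rem:constaopeattaindual} applied to the adjoint operator $T=S^{*}$.
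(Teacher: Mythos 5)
Your proposal is correct and follows essentially the same route as the paper: the paper likewise derives the theorem by combining Proposition~\ref{prop-new} (the identification $S^{*}=T$) with Proposition~\ref{prop:caralipsconstant} applied to $S:\ell_{1}(I)\to\mathcal{F}(X)$, together with Remark~\ref{rem:constaopeattaindual} for the attainment of the infimum. No gaps.
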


Let us now consider the following example -- based on \cite[Lemma 3.1]{amrt24} -- that can serve as a starting point for building our theory. As usual, we denote by $B(x,r)$ the open ball in the metric space $(X,d)$ with center $x\in X$ and radius $r>0$. 

\begin{example}\label{ex-1}
Let $X$ be a pointed metric space and assume that $\left((x_i,y_i)\right)_{i\in I}$ is a subset of $\widetilde{X}$ satisfying the following conditions:
\begin{enumerate}
    \item There are positive numbers $r_i>0$ such that $B(x_i,r_i)\cap B(x_j,r_j)=\emptyset$ if $i,j\in I$ and $i\neq j$.
    \item For every $i,j\in I$ with $i\neq j$, we have
    $$\frac{r_i+r_j}{d(B(x_i,r_i), B(x_j,r_j))}<\frac{1}{2}.$$
    \item For every $i\in I$, it holds that $0<d(x_i,y_i)<d(y_i,X\setminus B(x_i,r_i))$.
\end{enumerate}
Then $\left((x_i,y_i)\right)_{i\in I}$ is a Lipschitz interpolating set for $\Lipo(X)$ and its Lipschitz interpolation constant is $1$.
\end{example}

\begin{proof}
We first construct, for each $i\in I$, a function $f_i\in S_{\Lipo(X)}$ satisfying that $m_{x_j,y_j}(f_i)=\delta_{ij}$ for any $j\in I$. Indeed, consider $f_i\colon (X\setminus B(x_i,r_i))\cup\{x_i,y_i\}\to\mathbb{R}$ given by 
$$f_i(z)=\left\{\begin{array}{ll}
   0  & \mbox{if }z\neq y_i, \\
   -d(x_i,z)  & \mbox{if }z=y_i.
\end{array} \right.$$
Observe that $\Vert f_i\Vert=1$ since we have 
$$
\frac{f_i(z)-f_i(y_i)}{d(z,y_i)}=\frac{d(x_i,y_i)}{d(z,y_i)}\leq \frac{d(x_i,y_i)}{d(y_i,X\setminus B(x_i,r_i))}<1
$$
 and  
$$
\frac{f_i(x_i)-f_i(y_i)}{d(x_i,y_i)}=\frac{d(x_i,y_i)}{d(x_i,y_i)}=1.
$$
Hence we can extend $f_i$ by McShane Theorem to a norm-one function $f_i\in\Lipo(X)$. 

Now, given $\alpha=(\alpha_i)_{i\in I}\in \ell_\infty(I)$, consider the function $f:X\to\mathbb R$ given by
$$
f(z)=\sum_{i\in I}\alpha_i f_i(z),
$$
which is well defined 
since the supports of the functions $f_i$ are pairwise disjoint.
Let us prove that $f\in\Lipo(X)$ and $\Vert f\Vert=\Vert \alpha\Vert_\infty$. To do so, let $x,y\in X$ with $x\neq y$. Observe that if $x,y\notin\bigcup\limits_{i\in I}B(x_i,r_i)$, then $f(x)=f(y)=0$, hence $f(x)-f(y)=0$, so we assume $x\in B(x_i,r_i)$, which implies $f(x)=\alpha_i f_i(x)$. Now we have different possibilities for the position of $y$:
\begin{enumerate}
    \item[(1)] If $y\notin \bigcup\limits_{j\in I}B(x_j,r_j)$, then $f(y)=0$ and therefore
    \[\begin{split}
    f(x)-f(y)=\alpha_i f_i(x)=\alpha_i (f_i(x)-f_i(y))\leq \vert \alpha_i\vert \Vert f_i\Vert d(x,y)\leq \Vert \alpha\Vert_{\infty}d(x,y).
    \end{split}\]
    \item[(2)] If $y\in B(x_i,r_i)$ then $f(y)=\alpha_i f_i(y)$ and the conclusion that $f(x)-f(y)\leq \left\|\alpha\right\|_\infty d(x,y)$ is similar in this case.
    \item[(3)] If $y\in \bigcup\limits_{j\neq i}B(x_j,r_j)$, then $f(y)=\alpha_j f_j(y)$ for some $j\neq i$. In this case, using that $f_i(x_i)=f_j(x_j)=0$ by definition, we get 
    \[\begin{split}
    f(x)-f(y)=\alpha_i f_i(x)-\alpha_j f_j(y)
		& =\alpha_i (f_i(x)-f_i(x_i))+\alpha_j(f_j(x_j)-f_j(y))\\
    & \leq \vert \alpha_i\vert \Vert f_i\Vert d(x,x_i)+\vert \alpha_j\vert \Vert f_j\Vert d(x_j,y)\\
    & = \vert \alpha_i\vert  d(x,x_i)+\vert\alpha_j\vert   d(x_j,y)\\
    & \leq \left\|\alpha\right\|_\infty (d(x_i,x)+d(x_j,y))\\
    & \leq \left\|\alpha\right\|_\infty (r_i+r_j)\\
    & \leq \frac{\left\|\alpha\right\|_\infty}{2} d(B(x_i,r_i),B(x_j,r_j))\\
    & \leq \frac{\left\|\alpha\right\|_\infty}{2}d(x,y).
    \end{split}\]
\end{enumerate}
All the above discussion proves that $f\in \Lipo(X)$ and $\Vert f\Vert\leq \left\|\alpha\right\|_\infty$. To prove the reverse inequality, simply note that, given $i\in I$, it follows that
$$
\vert \alpha_i\vert=\frac{f(x_i)-f(y_i)}{d(x_i,y_i)}\leq \Vert f\Vert,
$$
and taking supremum over all $i\in I$ yields $\left\|\alpha\right\|_\infty\leq \Vert f\Vert$.

In order to prove that $\left((x_i,y_i)\right)_{i\in I}$ is a Lipschitz interpolating set for $\Lipo(X)$ whose Lipschitz interpolating constant is $1$, we will prove in virtue of Proposition~\ref{prop:caralipsconstant} that the operator $S:\ell_1(I)\to \mathcal F(X)$, defined by $S(\lambda)=\sum_{i\in I}\lambda_i m_{x_i,y_i}$ for all $\lambda=(\lambda_i)_{i\in I}\in\ell_1(I)$, is an into isometry. In order to do so, take any $\lambda=(\lambda_i)_{i\in I}\in\ell_1(I)$ of finite support (which is dense in $\ell_1(I)$) and select, for every $i\in I$, $\alpha_i:=\sign(\lambda_i)$, with the usual convention $\sign(0)=0$. Then $(\alpha_i)_{i\in I}$ is a set of finite support of $c_0(I)$. By the above, the function $f=\sum_{i\in I}\alpha_i f_i$ is in $S_{\Lipo(X)}$. Furthermore, 
\begin{align*}
\Vert S(\lambda)\Vert
&\geq f\left(\sum_{i\in I}\lambda_i m_{x_i,y_i}\right)
=\sum_{i\in I}\lambda_i f(m_{x_i,y_i})
=\sum_{i\in I}\sum_{j\in I}\alpha_j \lambda_i f_j(m_{x_i,y_i})\\
&=\sum_{i\in I}\lambda_i\alpha_i
=\sum_{i\in I}\vert \lambda_i\vert
=\Vert \lambda\Vert_1,
\end{align*}
as desired.
\end{proof}

A look at Example \ref{ex-1} shows that the following fact was essential: if $T:\Lipo(X)\to\ell_\infty(I)$ is the Lipschitz interpolating operator associated to $\left((x_i,y_i)\right)_{i\in I}$, then there exists a set of functions $(f_i)_{i\in I}$ in $S_{\Lipo(X)}$ such that $\overline{\mathrm{span}}((f_i)_{i\in I})$ is isometric to $\ell_\infty(I)$ and $T(f_i)=e_i$ holds for every $i\in I$. 

This motivates the introduction of the following sets of functions in $\Lipo(X)$ with a terminology borrowed from an elegant result of uniform algebra theory -- due to Beurling -- that appears in the Garnett's monograph \cite[Chapter VII, Theorem 2.1]{Gar-81}.

\begin{definition}\label{def:Berurlingeq}
Let $X$ be a pointed metric space and let $\left((x_i,y_i)\right)_{i\in I}$ be a Lipschitz interpolating set in $\widetilde{X}$ for $\Lipo(X)$ with Lipschitz interpolation constant $M$. A set $(f_i)_{i\in I}$ of real-valued-functions defined on $X$ is a Beurling set of functions in $\Lipo(X)$ for $\left((x_i,y_i)\right)_{i\in I}$ if $f_i(0)=0$ for all $i\in I$, $m_{x_j,y_j}(f_i)=\delta_{ij}$ for any $i,j\in I$, and 
$$
\sup_{(x,y)\in\widetilde{X}}\sum_{i\in I}\frac{\vert f_i(x)-f_i(y)\vert}{d(x,y)}\leq M.
$$ 
\end{definition}

Continuing with Example \ref{ex-1}, an easy verification shows that $R:\ell_\infty(I)\to\Lipo(X)$ given by  
$$
R(\alpha)=\sum_{i\in I}\alpha_i f_i,
$$
for any $\alpha=(\alpha_i)_{i\in I}\in\ell_\infty(I)$, is an isometric linear lifting of the operator $T:\Lipo(X)\to\ell_\infty(I)$ in the sense that $T\circ R=\mathrm{Id}_{\ell_\infty(I)}$ and $\Vert R\Vert=1$, which is precisely the Lipschitz interpolation constant of the set $\left((x_i,y_i)\right)_{i\in I}$. This fact is a reflection of the following more general result.

\begin{theorem}\label{rem:levanvarapouloset}
Let $\left((x_i,y_i)\right)_{i\in I}$ be a Lipschitz interpolating set in $\widetilde{X}$ for $\Lipo(X)$ with Lipschitz interpolating constant $M$. Assume that $(f_i)_{i\in I}$ is a Beurling set of functions in $\Lipo(X)$ for $\left((x_i,y_i)\right)_{i\in I}$. Then the mapping $R:\ell_\infty(I)\to\Lipo(X)$ given by
$$
R(\alpha)=\sum_{i\in I}\alpha_i f_i \qquad (\alpha=(\alpha_i)_{i\in I}\in\ell_\infty(I)),
$$
is linear and continuous with $\left\|R\right\|=M$ and $T\circ R=\mathrm{Id}_{\ell_\infty(I)}$.
\end{theorem}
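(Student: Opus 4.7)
The plan is to verify each of the three claims (linearity, $\|R\|=M$, and $T\circ R=\mathrm{Id}$) in order, after first checking that $R(\alpha)$ is a well-defined element of $\Lipo(X)$ for every $\alpha\in\ell_\infty(I)$.

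For the well-definedness, I would first extract pointwise information from the Beurling condition by taking $y=0$: since $f_i(0)=0$ for every $i\in I$, one obtains
\[
\sum_{i\in I}\frac{|f_i(x)|}{d(x,0)}=\sum_{i\in I}\frac{|f_i(x)-f_i(0)|}{d(x,0)}\leq M \qquad (x\in X\setminus\{0\}),
\]
so $\sum_{i\in I}|\alpha_i f_i(x)|\leq M\|\alpha\|_\infty d(x,0)$ converges absolutely at each $x\in X$. This defines a function $R(\alpha)\colon X\to\R$ with $R(\alpha)(0)=0$. Using now the Beurling condition at an arbitrary pair $(x,y)\in\widetilde{X}$, a direct estimate shows
\[
\frac{|R(\alpha)(x)-R(\alpha)(y)|}{d(x,y)}\leq \|\alpha\|_\infty \sum_{i\in I}\frac{|f_i(x)-f_i(y)|}{d(x,y)}\leq M\|\alpha\|_\infty,
\]
so $R(\alpha)\in\Lipo(X)$ with $\|R(\alpha)\|\leq M\|\alpha\|_\infty$. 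Linearity of $R$ follows routinely from the absolute convergence above, and therefore $R\in\L(\ell_\infty(I),\Lipo(X))$ with $\|R\|\leq M$.

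For the identity $T\circ R=\mathrm{Id}_{\ell_\infty(I)}$, I would just compute: for $\alpha\in\ell_\infty(I)$ and $j\in I$, exchanging the finite-differences operator with the absolutely convergent sum gives
\[
(T\circ R)(\alpha)_j=\sum_{i\in I}\alpha_i\, \frac{f_i(x_j)-f_i(y_j)}{d(x_j,y_j)}=\sum_{i\in I}\alpha_i m_{x_j,y_j}(f_i)=\sum_{i\in I}\alpha_i\delta_{ij}=\alpha_j,
\]
by the defining property of a Beurling set of functions. Finally, the equality $\|R\|=M$ follows from the definition of the Lipschitz interpolating constant: any $\alpha\in B_{\ell_\infty(I)}$ is the image under $T$ of $R(\alpha)\in \|R\| B_{\Lipo(X)}$, so $M\leq\|R\|$ by Definition~\ref{def-LIC}, and the reverse inequality was already established.

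There is essentially no serious obstacle here; the only subtle point is making sure the interchange of the difference operator and the infinite sum in the verification of $T\circ R=\mathrm{Id}$ is justified, but this is immediate from the absolute convergence provided by the Beurling estimate.
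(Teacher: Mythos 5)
Your proposal is correct and follows essentially the same route as the paper's proof: well-definedness via the Beurling estimate applied at the pair $(x,0)$, the Lipschitz bound $\|R(\alpha)\|\leq M\|\alpha\|_\infty$ from the Beurling condition at arbitrary $(x,y)$, the identity $T\circ R=\mathrm{Id}_{\ell_\infty(I)}$ from $m_{x_j,y_j}(f_i)=\delta_{ij}$, and the reverse inequality $M\leq\|R\|$ from the definition of the interpolation constant. No gaps.
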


\begin{proof}
Observe first that $R$ is well defined in the sense that, given $\alpha=(\alpha_i)_{i\in I}\in\ell_\infty(I)$, the formal sum $\sum_{i\in I} \alpha_i f_i$ is pointwise well defined. Indeed, given $x\in X\setminus\{0\}$ and $F\subseteq I$ finite, we have
\begin{align*}
\sum_{i\in F}\vert \alpha_i f_i(x)\vert
&=\sum_{i\in F}\vert \alpha_i(f_i(x)-f_i(0))\vert\\
&\leq d(x,0)\Vert \alpha\Vert_\infty \sum_{i\in F}\frac{\vert f_i(x)-f_i(0)\vert}{d(x,0)}\\
&\leq d(x,0)\Vert \alpha\Vert_\infty M.
\end{align*}
This implies that the family $\sum_{i\in I}\alpha_i f_i(x)$ is absolutely summable, so it is summable by a completeness argument and thus the function $R(\alpha)\colon X\to\mathbb R$ is pointwise well defined. 

Let $\alpha=(\alpha_i)_{i\in I}\in B_{\ell_\infty(I)}$ and define $g=\sum_{i\in I}\alpha_i f_i$. Clearly, $g(0)=0$. For any $(x,y)\in\widetilde{X}$, we have
\begin{align*}
\frac{\vert g(x)-g(y)\vert}{d(x,y)}
&=\left|\sum_{i\in I}\frac{\alpha_i(f_i(x)-f_i(y))}{d(x,y)}\right|
\leq \sum_{i\in I}\frac{\left|\alpha_i\right|\left|f_i(x)-f_i(y)\right|}{d(x,y)}\\
&\leq \Vert \alpha\Vert_\infty \sup_{(u,v)\in\widetilde{X}}\sum_{i\in I}\frac{\vert f_i(u)-f_i(v)\vert}{d(u,v)}
\leq M,   
\end{align*}
and therefore $g\in\Lipo(X)$ with $\Vert g\Vert\leq M$. Since $R\colon\ell_\infty(I)\to\Lipo(X)$ is clearly linear, it follows that $R\colon\ell_\infty(I)\to\Lipo(X)$ is continuous with $\Vert R\Vert\leq M$. Now, the condition that $m_{x_j,y_j}(f_i)=\delta_{ij}$ for any $i,j\in I$ implies that $T\circ R=\mathrm{Id}_{\ell_\infty(I)}$. Indeed, given $\alpha=(\alpha_i)_{i\in I}\in\ell_\infty(I)$, one has 
\begin{align*}
(T\circ R)(\alpha)
&=T\left(\sum_{i\in I}\alpha_i f_i\right)=\left(\sum_{i\in I}\frac{\alpha_i (f_i(x_j)-f_i(y_j))}{d(x_j,y_j)}\right)_{j\in I}\\
&=\left(\sum_{i\in I}\alpha_i m_{x_j,y_j}(f_i)\right)_{j\in I}=\left(\sum_{i\in I}\alpha_i\delta_{ij}\right)_{j\in I}=\left(\alpha_j\right)_{j\in I}=\alpha.
\end{align*}
Finally, note that $\Vert R\Vert=M$ since given any $\alpha\in B_{\ell_\infty(I)}$, we have that $T(R(\alpha))=\alpha$ where $R(\alpha)\in\Lipo(X)$ with $\left\|R(\alpha)\right\|\leq\left\|R\right\|$, and therefore $M\leq\Vert R\Vert$ by Definition \ref{def-LIC} and Remark \ref{rem:constaopeattaindual}.
\end{proof}

A kind of converse of Theorem \ref{rem:levanvarapouloset} can be established as follows.

\begin{theorem}\label{cor:caravarapoulos0}
Let $\left((x_i,y_i)\right)_{i\in I}$ be a Lipschitz interpolating set in $\widetilde{X}$ for $\Lipo(X)$ and let $M$ be its Lipschitz interpolating constant. Assume that there exists $R\in\L(\ell_\infty(I),\Lipo(X))$ such that $\left\|R\right\|\leq M$ and $T\circ R=\mathrm{Id}_{\ell_\infty(I)}$. Then there exists a Beurling set $(f_i)_{i\in I}$ of functions in $\Lipo(X)$ for $\left((x_i,y_i)\right)_{i\in I}$.
\end{theorem}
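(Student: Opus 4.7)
The plan is to take $f_i := R(e_i)$ for each $i \in I$ as the candidate Beurling family. Two of the three required properties will follow immediately: since $R$ maps into $\Lipo(X)$, each $f_i$ satisfies $f_i(0)=0$; and the identity $T \circ R = \mathrm{Id}_{\ell_\infty(I)}$ forces $T(f_i) = e_i$, which is precisely the statement that $m_{x_j,y_j}(f_i)=\delta_{ij}$ for all $i,j\in I$.

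The heart of the argument will be establishing the Beurling inequality
$$
\sup_{(x,y)\in\widetilde{X}}\sum_{i\in I}\frac{|f_i(x)-f_i(y)|}{d(x,y)}\leq M.
$$
I would handle this through a signing trick combined with a finite-to-infinite reduction. Fix $(x,y)\in\widetilde{X}$ and any finite subset $F\subseteq I$. Define $\alpha\in B_{\ell_\infty(I)}$ by $\alpha_i:=\sign(f_i(x)-f_i(y))$ for $i\in F$ and $\alpha_i:=0$ otherwise. Because $\alpha=\sum_{i\in F}\alpha_i e_i$ is finitely supported, linearity of $R$ gives $R(\alpha)=\sum_{i\in F}\alpha_i f_i$, so
$$
\sum_{i\in F}\frac{|f_i(x)-f_i(y)|}{d(x,y)}=\frac{R(\alpha)(x)-R(\alpha)(y)}{d(x,y)}\leq \|R(\alpha)\|\leq \|R\|\,\|\alpha\|_\infty\leq M.
$$
Passing to the supremum over finite $F\subseteq I$ (legitimate since the summands are nonnegative, so the supremum equals the sum over $I$) and then over $(x,y)\in\widetilde{X}$ yields the Beurling inequality.

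The only potential pitfall I anticipate is being tempted to feed an infinitely supported $\alpha\in B_{\ell_\infty(I)}$ directly into $R$ and argue in one step; this would in principle require a result on the interchange of $R$ with the formal series $\sum_i \alpha_i f_i$, which is not part of our hypotheses (indeed, the content of Theorem~\ref{rem:levanvarapouloset} is essentially what makes such an interchange work, so invoking it here would be circular). Restricting the signing to a finite subset $F$ and exploiting the monotonicity of the nonnegative sum sidesteps this issue entirely, and the whole argument reduces to the short chain of inequalities displayed above.
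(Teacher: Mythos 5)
Your proposal is correct and follows essentially the same route as the paper's proof: define $f_i:=R(e_i)$, check $m_{x_j,y_j}(f_i)=\delta_{ij}$ from $T\circ R=\mathrm{Id}_{\ell_\infty(I)}$, and obtain the Beurling bound by applying $R$ to finitely supported sign vectors and using $\|R\|\leq M$, then passing to the supremum over finite subsets. Your remark about avoiding the infinitely supported $\alpha$ is a sensible precaution, and the finite-support reduction is exactly what the paper does.
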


\begin{proof}
For each $i\in I$, define $f_i=R(e_i)\in\Lipo(X)$. Firstly, for any $i,j\in I$, we have
$$
m_{x_j,y_j}(f_i)=\frac{f_i(x_j)-f_i(y_j)}{d(x_j,y_j)}=\frac{R(e_i)(x_j)-R(e_i)(y_j)}{d(x_j,y_j)}=T(R(e_i))(j)=e_i(j)=\delta_{ij}.
$$
Secondly, let $(x,y)\in \widetilde{X}$. For every $i\in I$, there exists $\alpha_i\in\{-1,1\}$ such that $\left|f_i(x)-f_i(y)\right|=\alpha_i(f_i(x)-f_i(y))$. Given a finite set $F\subseteq I$, take $\beta_F=(\beta_i)_{i\in I}\in\ell_\infty(I)$ defined by 
$$
\beta_i=\left\{
\begin{array}{lll}
\alpha_i& \text{ if } & i\in F,\\
        &             &        \\
0       & \text{ if } & i\in I\setminus F.
\end{array}
\right.
$$
Note that $R(\beta_F)=\sum_{i\in F}\alpha_if_i$ and 
$$
\sum_{i\in F}\frac{\left|f_i(x)-f_i(y)\right|}{d(x,y)}
=\sum_{i\in F}\frac{\alpha_if_i(x)-\alpha_if_i(y)}{d(x,y)}
=m_{x,y}(R(\beta_F))\leq\left\|R(\beta_F)\right\|\leq \left\|R\right\|\left\|\beta_F\right\|_\infty\leq M.
$$
Therefore $\displaystyle\sum_{i\in I}\frac{\left|f_i(x)-f_i(y)\right|}{d(x,y)}\leq M$ by the arbitrariness of the finite set $F\subseteq I$. Since $(x,y)$ was arbitrary in $\widetilde{X}$, we deduce that 
$$
\sup_{(x,y)\in\widehat{X}}\sum_{i\in I}\frac{\vert f_i(x)-f_i(y)\vert}{d(u,v)}\leq M,
$$   
and this completes the proof.
\end{proof}

The following theorem shows that the existence of a lifting of the Lipschitz interpolating operator $T$ goes beyond in a preadjoint setting.

\begin{theorem}\label{theo:varapoadjointope} 
Let $\left((x_i,y_i)\right)_{i\in I}$ be a Lipschitz interpolating set in $\widetilde{X}$, let $M$ be its Lipschitz interpolation constant and let $(f_i)_{i\in I}$ be a Beurling set of functions in $\Lipo(X)$ for $\left((x_i,y_i)\right)_{i\in I}$. Then the bounded linear operator $R:\ell_\infty(I)\to \Lipo(X)$, defined by
$$
R(\alpha)=\sum_{i\in I}\alpha_i f_i\qquad (\alpha=(\alpha_i)_{i\in I}\in\ell_\infty(I)),
$$
is weak*-to-weak* continuous.
\end{theorem}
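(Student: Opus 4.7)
The plan is to exhibit $R$ as the adjoint of a bounded linear operator $P\in\L(\F(X),\ell_1(I))$; then the classical fact that every adjoint between dual spaces is $w^*$-to-$w^*$ continuous will finish the proof. The natural candidate, suggested by the universal property of $\F(X)$ together with the identification $\ell_1(I)^*\cong\ell_\infty(I)$, is
$$
P(\gamma):=\bigl(\gamma(f_i)\bigr)_{i\in I}\qquad (\gamma\in\F(X)).
$$

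First I would show that $P$ is a well-defined bounded operator into $\ell_1(I)$ with $\|P\|\leq M$. Writing an arbitrary $\gamma\in\F(X)$ as a norm-absolutely convergent series $\gamma=\sum_{n=1}^{\infty}\lambda_n m_{u_n,v_n}$ with $(\lambda_n)\in\ell_1$ and evaluating each $f_i\in\Lipo(X)$ at $\gamma$, Tonelli's theorem for nonnegative double series combined with the Beurling inequality of Definition~\ref{def:Berurlingeq} yields
$$
\sum_{i\in I}|\gamma(f_i)|\leq\sum_{n=1}^{\infty}|\lambda_n|\sum_{i\in I}\frac{|f_i(u_n)-f_i(v_n)|}{d(u_n,v_n)}\leq M\sum_{n=1}^{\infty}|\lambda_n|.
$$
Taking the infimum over representations of $\gamma$ gives $\|P(\gamma)\|_1\leq M\|\gamma\|$. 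This summability estimate is the sole delicate point of the argument: absent the Beurling hypothesis there would be no reason for $(\gamma(f_i))_{i\in I}$ to lie in $\ell_1(I)$, and it is exactly this hypothesis (not merely boundedness of the family $(f_i)$ in $\Lipo(X)$) that makes the preadjoint $P$ possible.

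Finally I would identify $P^*$ with $R$ under the canonical isometric isomorphisms $\F(X)^*\cong\Lipo(X)$ and $\ell_1(I)^*\cong\ell_\infty(I)$. Since $\F(X)=\overline{\mathrm{span}}\{\delta_x:x\in X\}$ and both $P^*(\alpha)$ and $R(\alpha)$ are continuous functionals on $\F(X)$, it suffices to check that they coincide on each $\delta_x$; but for $\alpha=(\alpha_i)_{i\in I}\in\ell_\infty(I)$ and $x\in X$ one has
$$
P^*(\alpha)(\delta_x)=\alpha(P(\delta_x))=\sum_{i\in I}\alpha_i f_i(x)=R(\alpha)(x)=\delta_x(R(\alpha)),
$$
where the series converges absolutely by the previous step applied to $\gamma=\delta_x$ (using $f_i(0)=0$). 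Hence $P^*=R$, and therefore $R$ is $w^*$-to-$w^*$ continuous, as desired.
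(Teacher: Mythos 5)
Your proof is correct, but it takes a genuinely different route from the paper's. The paper verifies weak*-to-weak* continuity directly: by the Banach--Dieudonn\'e theorem it suffices to show that $\ker(\gamma\circ R)\cap B_{\ell_\infty(I)}$ is $w^*$-closed for every $\gamma\in\F(X)$, and this is carried out by a fairly long net-and-$\varepsilon$ computation, truncating $\gamma$ to a finite sum of molecules and the index set $I$ to a finite subset. You instead build the preadjoint $P(\gamma)=(\gamma(f_i))_{i\in I}$ explicitly, where the Beurling inequality together with Tonelli gives $\|P(\gamma)\|_1\leq M\|\gamma\|$ for any $\ell_1$-representation of $\gamma$ by molecules, and then identify $P^*=R$ on the dense span of the $\delta_x$; the conclusion follows from the standard fact that adjoints are $w^*$-to-$w^*$ continuous. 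Your argument is shorter and has the added benefit of producing directly the operator $P\in\L(\F(X),\ell_1(I))$ with $\|P\|\leq M$ and (as one checks on the $e_i$) $P\circ S=\Id_{\ell_1(I)}$, i.e.\ essentially statement (iii) of Theorem~\ref{cor:caravarapoulos}; in the paper that operator is only obtained afterwards, in the implication $(ii)\Rightarrow(iii)$ of that theorem, by taking the preadjoint of the $R$ whose $w^*$-continuity Theorem~\ref{theo:varapoadjointope} establishes. So the two approaches traverse the same circle of equivalences in opposite directions, and yours does so more economically; the only point that deserves explicit mention is that $\sum_{i\in I}$ over a possibly uncountable $I$ is understood as a supremum over finite subsets, for which the Tonelli interchange with the series in $n$ still holds since all terms are nonnegative.
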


\begin{proof}
Note that $R$ is weak*-to-weak* continuous if, and only if, for every $\gamma\in\mathcal F(X)$ we get that $\gamma\circ R: \ell_\infty(I)\to \mathbb R$ is a $w^*$-continuous functional. By Banach--Dieudonn\'e Theorem \cite[Corollary 4.46]{checos}, this is equivalent to proving that $\ker(\gamma\circ  R)\cap B_{\ell_\infty(I)}$ is $w^*$-closed.

In order to prove it, take a $w^*$-convergent net $(\alpha_s)_s$ in $\ker(\gamma\circ  R)\cap B_{\ell_\infty(I)}$ with $\alpha_s=(\alpha_{s,i})_{i\in I}$ for each $s$, which converges to $\alpha=(\alpha_i)_{i\in I}\in\ell_\infty(I)$. Since the norm is $w^*$-lower semicontinuous, it follows that $\Vert\alpha\Vert_\infty\leq \liminf_{s}\Vert \alpha_s\Vert_\infty\leq 1$. Let us prove that $(\gamma\circ R)(\alpha)=0$. In order to do so, select $\varepsilon>0$ and let us prove that $\vert (\gamma\circ R)(\alpha)\vert<\varepsilon$. 

To begin with observe that we can find $\lambda_n\in\mathbb R^+$ and $(x_n,y_n)\in\widetilde{X}$ for every $n\in\mathbb N$ such that $\gamma=\sum_{n=1}^\infty \lambda_n m_{x_n,y_n}$ and $\sum_{n=1}^\infty \vert\lambda_n\vert<\infty$. 

Now select $n_0\in \mathbb N$ such that if $\gamma_0:=\sum_{n=1}^{n_0}\lambda_n m_{x_n,y_n}$, we get $\Vert \gamma-\gamma_0\Vert<\varepsilon/6M$. Then
$$
\vert (\gamma\circ R)(\alpha)\vert
=\vert  \gamma(R(\alpha))\vert
\leq \vert  \gamma_0(R(\alpha))\vert+\left\|\gamma-\gamma_0\right\|\left\|R\right\|\left\|\alpha\right\|_\infty
\leq \frac{\varepsilon}{6}+\vert\gamma_0(R(\alpha))\vert.
$$
Now observe that
$$
\gamma_0(R(\alpha))=\sum_{n=1}^{n_0}\lambda_n \frac{ R(\alpha)(x_n)- R(\alpha)(y_n)}{d(x_n,y_n)}.
$$
For every $1\leq n\leq n_0$, we get that $\sum_{i\in I} \vert f_i(x_n)\vert<\infty$ and $\sum_{i\in I}\vert f_i(y_n)\vert<\infty$ hold (Theorem~\ref{rem:levanvarapouloset}). Consequently, we can find a finite set $F\subseteq I$ such that, for every $1\leq n\leq n_0$,  
$$
\sum_{i\notin F}\vert f_i(x_n)\vert+\sum_{i\notin F}\vert f_i(y_n)\vert<\frac{\varepsilon  d(x_n,y_n)}{6\lambda_n n_0}.
$$
Thus
\begin{align*}
\vert \gamma_0(R(\alpha))\vert
& =\left\vert \sum_{n=1}^{n_0}\lambda_n \frac{\sum_{i\in I}\alpha_if_i(x_n)-\sum_{i\in I}\alpha_i f_i(y_n)}{d(x_n,y_n)} \right\vert\\
& \leq \left\vert \sum_{n=1}^{n_0}\lambda_n \frac{\sum_{i\in F}\alpha_if_i(x_n)-\sum_{i\in F}\alpha_i f_i(y_n)}{d(x_n,y_n)} \right\vert +\left\vert \sum_{n=1}^{n_0}\lambda_n \frac{\sum_{i\notin F}\alpha_if_i(x_n)-\sum_{i\notin F}\alpha_i f_i(y_n)}{d(x_n,y_n)} \right\vert\\
& \leq \left\vert \sum_{n=1}^{n_0}\lambda_n \frac{\sum_{i\in F}\alpha_if_i(x_n)-\sum_{i\in F}\alpha_i f_i(y_n)}{d(x_n,y_n)} \right\vert+\sum_{n=1}^{n_0} \lambda_n \frac{\sum_{i\notin F}\vert f_i(x_n)\vert+\sum_{i\notin F}\vert f_i(y_n)\vert}{d(x_n,y_n)}\\
& < \left\vert \sum_{n=1}^{n_0}\lambda_n \frac{\sum_{i\in F}\alpha_if_i(x_n)-\sum_{i\in F}\alpha_i f_i(y_n)}{d(x_n,y_n)} \right\vert+\frac{\varepsilon}{6}.
\end{align*}
Hence
$$\vert (\gamma\circ R)(\alpha)\vert<\frac{\varepsilon}{3}+\left\vert \sum_{n=1}^{n_0}\lambda_n \frac{\sum_{i\in F}\alpha_if_i(x_n)-\sum_{i\in F}\alpha_i f_i(y_n)}{d(x_n,y_n)} \right\vert.$$
Now we use that the net $(\alpha_s)_s$ is $w^*$-convergent to $\alpha$, which means $(\alpha_{s,i})_s\rightarrow \alpha_i$ holds for every $i\in I$. Consequently, we can find $s$ large enough to guarantee
$$
\vert \alpha_{s,i}-\alpha_i\vert<\frac{\varepsilon }{6 M n_0\max\{\lambda_1,\ldots, \lambda_{n_0}\}}.
$$
Hence
\begin{align*}
   &\left\vert \sum_{n=1}^{n_0}\lambda_n \frac{\sum_{i\in F}\alpha_if_i(x_n)-\sum_{i\in F}\alpha_i f_i(y_n)}{d(x_n,y_n)} \right\vert\\
	 &\leq \left\vert \sum_{n=1}^{n_0}\lambda_n \frac{\sum_{i\in F}\alpha_{s,i}f_i(x_n)-\sum_{i\in F}\alpha_{s,i} f_i(y_n)}{d(x_n,y_n)} \right\vert 
    +\left\vert \sum_{n=1}^{n_0}\lambda_n \frac{\sum_{i\in F}(\alpha_i-\alpha_{s,i})f_i(x_n)-\sum_{i\in F}(\alpha_i-\alpha_{s,i}) f_i(y_n)}{d(x_n,y_n)} \right\vert\\
   &\leq  \left\vert \sum_{n=1}^{n_0}\lambda_n \frac{\sum_{i\in F}\alpha_{s,i}f_i(x_n)-\sum_{i\in F}\alpha_{s,i} f_i(y_n)}{d(x_n,y_n)} \right\vert
   +\sum_{n=1}^{n_0}\lambda_n  \sum_{i\in F}\vert \alpha_i-\alpha_{s,i}\vert\frac{\vert f_i(x_n)-f_i(y_n)\vert}{d(x_n,y_n)}\\
   & \leq \left\vert \sum_{n=1}^{n_0}\lambda_n \frac{\sum_{i\in F}\alpha_{s,i}f_i(x_n)-\sum_{i\in F}\alpha_{s,i} f_i(y_n)}{d(x_n,y_n)} \right\vert
   + \sum_{n=1}^{n_0}\lambda_n \sum_{i\in F}\frac{\varepsilon }{6 M n_0\max\{\lambda_1,\ldots, \lambda_{n_0}\}}\frac{\vert f_i(x_n)-f_i(y_n)\vert}{d(x_n,y_n)}\\
   & \leq \left\vert \sum_{n=1}^{n_0}\lambda_n \frac{\sum_{i\in F}\alpha_{s,i}f_i(x_n)-\sum_{i\in F}\alpha_{s,i} f_i(y_n)}{d(x_n,y_n)} \right\vert
   + \frac{\varepsilon}{6M}\sum_{i\in I}\frac{\vert f_i(x_n)-f_i(y_n)\vert}{d(x_n,y_n)}\\
   & \leq \left\vert \sum_{n=1}^{n_0}\lambda_n \frac{\sum_{i\in F}\alpha_{s,i}f_i(x_n)-\sum_{i\in F}\alpha_{s,i} f_i(y_n)}{d(x_n,y_n)} \right\vert+\frac{\varepsilon}{6}.
\end{align*}
Putting all together, we get
$$\vert (\gamma\circ R)(\alpha)\vert<\frac{\varepsilon}{3}+\frac{\varepsilon}{6}+\left\vert \sum_{n=1}^{n_0}\lambda_n \frac{\sum_{i\in F}\alpha_{s,i}f_i(x_n)-\sum_{i\in F}\alpha_{s,i} f_i(y_n)}{d(x_n,y_n)} \right\vert.$$
Using once again that 
$$\sum_{i\notin F}\vert f_i(x_n)\vert+\sum_{i\notin F}\vert f_i(y_n)\vert<\frac{\varepsilon  d(x_n,y_n)}{6\lambda_n n_0},$$ 
we infer
\[
\begin{split}
&\left\vert \sum_{n=1}^{n_0}\lambda_n \frac{\sum_{i\in F}\alpha_{s,i}f_i(x_n)-\sum_{i\in F}\alpha_{s,i} f_i(y_n)}{d(x_n,y_n)} \right\vert\\
& \leq \left\vert \sum_{n=1}^{n_0}\lambda_n \frac{\sum_{i\in I}\alpha_{s,i}f_i(x_n)-\sum_{i\in I}\alpha_{s,i} f_i(y_n)}{d(x_n,y_n)} \right\vert
 +\left\vert \sum_{n=1}^{n_0}\lambda_n \frac{\sum_{i\notin F}\alpha_{s,i}f_i(x_n)-\sum_{i\notin F}\alpha_{s,i} f_i(y_n)}{d(x_n,y_n)} \right\vert\\
& <\left\vert \sum_{n=1}^{n_0}\lambda_n \frac{\sum_{i\in I}\alpha_{s,i}f_i(x_n)-\sum_{i\in I}\alpha_{s,i} f_i(y_n)}{d(x_n,y_n)} \right\vert
 +\sum_{n=1}^{n_0}\lambda_n \frac{\sum_{i\notin F}\vert f_i(x_n)\vert+\sum_{i\notin F}\vert f_i(y_n)\vert }{d(x_n,y_n)}\\
& < \left\vert \sum_{n=1}^{n_0}\lambda_n \frac{\sum_{i\in I}\alpha_{s,i}f_i(x_n)-\sum_{i\in I}\alpha_{s,i} f_i(y_n)}{d(x_n,y_n)} \right\vert+\frac{\varepsilon}{6}.
\end{split}\]
Thus
$$\vert (\gamma\circ R)(\alpha)\vert<\frac{\varepsilon}{3}+\frac{\varepsilon}{6}+\frac{\varepsilon}{6}+\left\vert \sum_{n=1}^{n_0}\lambda_n \frac{\sum_{i\in I}\alpha_{s,i}f_i(x_n)-\sum_{i\in I}\alpha_{s,i} f_i(y_n)}{d(x_n,y_n)} \right\vert.$$
Finally, observe that
\[
\begin{split}
\sum_{n=1}^{n_0}\lambda_n \frac{\sum_{i\in I}\alpha_{s,i}f_i(x_n)-\sum_{i\in I}\alpha_{s,i} f_i(y_n)}{d(x_n,y_n)}
& =\sum_{i=1}^{n_0}\lambda_n \frac{ R(\alpha_s)(x_n)- R(\alpha_s)(y_n)}{d(x_n,y_n)}\\
& = \left(\sum_{n=1}^{n_0}\lambda_n m_{x_n,y_n}\right)(R(\alpha_s))= \gamma_0(R(\alpha_s)).
\end{split}
\]
Thus
$$\vert (\gamma\circ R)(\alpha)\vert<\frac{2\varepsilon}{3}+\vert  \gamma_0(R(\alpha_s))\vert.$$
Using once again that $\Vert \gamma-\gamma_0\Vert<\varepsilon/6M$, we get that
\[
\begin{split}
\vert  \gamma_0(R(\alpha_s))\vert
&\leq \vert  \gamma(R(\alpha_s))\vert+\vert  (\gamma-\gamma_0)(R(\alpha_s))\vert\\
& \leq \vert  \gamma(R(\alpha_s))\vert+\Vert R\Vert \Vert \alpha_s\Vert_\infty\Vert \gamma-\gamma_0\Vert\\
& \leq \vert  \gamma(R(\alpha_s))\vert+\Vert  R\Vert \frac{\varepsilon}{6M}<\vert \gamma(R(\alpha_s))\vert+\frac{\varepsilon}{6}.
\end{split}
\]
Using finally that $\alpha_s\in \ker(\gamma\circ R)$, we get $ \gamma(R(\alpha_s))=0$. Consequently,
$$\vert (\gamma\circ R)(\alpha)\vert<\frac{2\varepsilon}{3}+\vert\gamma(R(\alpha_s))\vert+\frac{\varepsilon}{6}=\frac{5\varepsilon}{6}<\varepsilon.$$
Since $\varepsilon>0$ was arbitrary, we get $(\gamma\circ R)(\alpha)=0$, proving that $\ker(\gamma\circ R)\cap B_{\ell_\infty(I)}$ is $w^*$-closed. This finishes the proof.
\end{proof}

If $\left((x_i,y_i)\right)_{i\in I}$ is a Lipschitz interpolating set in $\widetilde{X}$ for $\Lip_0(X)$, the combination of Theorems \ref{rem:levanvarapouloset} and \ref{cor:caravarapoulos0} shows that the existence of a Beurling set of functions in $\Lipo(X)$ for $\left((x_i,y_i)\right)_{i\in I}$ characterizes the Lipschitz interpolation of such a set as linear. 

Now, the weak*-to-weak* continuity of the lifting $R:\ell_\infty(I)\to \Lipo(X)$ of the Lipschitz interpolating operator $T:\Lipo(X)\to \ell_\infty(I)$ associated to $\left((x_i,y_i)\right)_{i\in I}$ is the cornerstone for establishing new characterizations. Compare the following theorem to \cite[Theorem 5.2]{Muj-91}, a result of Mujica on interpolating sequences for spaces of bounded holomorphic maps between Banach spaces.

\begin{theorem}\label{cor:caravarapoulos}
Let $\left((x_i,y_i)\right)_{i\in I}$ be a Lipschitz interpolating set in $\widetilde{X}$ for $\Lipo(X)$ and let $M$ be its Lipschitz interpolating constant. The following statements are equivalent:
\begin{enumerate}
    \item There exists a Beurling set $(f_i)_{i\in I}$ of functions in $\Lipo(X)$ for $\left((x_i,y_i)\right)_{i\in I}$.
		\item There exists $R\in\L((\ell_\infty(I),w^*);(\Lipo(X),w^*))$ such that $\left\|R\right\|=M$ and $T\circ R=\mathrm{Id}_{\ell_\infty(I)}$.
    \item There exists $P\in\L(\F(X),\ell_1(I))$ such that $\Vert P\Vert=M$ and $P\circ S=\mathrm{Id}_{\ell_1(I)}$.
		\item There exists $f\in\Lipo(X,\ell_1)$ such that $\left\|f\right\|=M$ and $\left(\dfrac{f(x_i)-f(y_i)}{d(x_i,y_i)}\right)_{i\in I}=\left(e_i\right)_{i\in I}$.
    \item There exists $R_0\in\L(c_0(I),\Lipo(X))$ such that $\left\|R_0\right\|\leq M$ and $T\circ R_0=\Id_{c_0(I)}$.
\end{enumerate}
\end{theorem}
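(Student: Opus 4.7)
The plan is to run the implications $(1)\Rightarrow(2)\Leftrightarrow(3)\Leftrightarrow(4)$ together with $(2)\Leftrightarrow(5)$, using Banach-space duality as the main engine and forcing sharpness of norms through Proposition~\ref{prop:caralipsconstant} applied to $S$. The key identification is that, by Proposition~\ref{prop-new}, $S^{*}=T$ under $\F(X)^{*}\cong\Lipo(X)$ and $\ell_1(I)^{*}\cong\ell_\infty(I)$, so weak$^{*}$-to-weak$^{*}$ continuous right inverses of $T$ are precisely the preadjoints of left inverses of $S$.

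For $(1)\Rightarrow(2)$, Theorem~\ref{rem:levanvarapouloset} already supplies $R\in\L(\ell_\infty(I),\Lipo(X))$ with $\Vert R\Vert=M$ and $T\circ R=\Id_{\ell_\infty(I)}$, and Theorem~\ref{theo:varapoadjointope} gives its weak$^{*}$-to-weak$^{*}$ continuity; conversely, $(2)\Rightarrow(1)$ is Theorem~\ref{cor:caravarapoulos0}. The equivalence $(2)\Leftrightarrow(3)$ follows from the standard fact that a bounded operator between dual Banach spaces is weak$^{*}$-to-weak$^{*}$ continuous if and only if it is the adjoint of an operator between the preduals, with identical norm: $R=P^{*}$ for a unique $P\in\L(\F(X),\ell_1(I))$ with $\Vert P\Vert=\Vert R\Vert$, and $T\circ R=\Id_{\ell_\infty(I)}$ rewrites via $T=S^{*}$ as $(P\circ S)^{*}=\Id_{\ell_\infty(I)}$, which is equivalent to $P\circ S=\Id_{\ell_1(I)}$. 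For $(3)\Rightarrow(4)$ I set $f(x):=P(\delta_x)$, so that $f(0)=0$ and $(f(x)-f(y))/d(x,y)=P(m_{x,y})$ for every $(x,y)\in\widetilde{X}$, whence $f\in\Lipo(X,\ell_1(I))$ and $P(m_{x_i,y_i})=P(S(e_i))=e_i$; the equality $\Vert f\Vert=M$ comes from the standard description $B_{\F(X)}=\overline{\mathrm{conv}}\{\pm m_{x,y}:(x,y)\in\widetilde{X}\}$, which forces $\Vert f\Vert=\sup_{(x,y)\in\widetilde X}\Vert P(m_{x,y})\Vert_1=\Vert P\Vert=M$. The reverse $(4)\Rightarrow(3)$ is the universal extension property of $\F(X)$: the linearization $P:=T_f\in\L(\F(X),\ell_1(I))$ satisfies $\Vert P\Vert=\Lip(f)=M$ and $P(S(e_i))=P(m_{x_i,y_i})=e_i$.

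The implication $(2)\Rightarrow(5)$ is trivial by restricting $R$ to $c_0(I)$. For the harder converse $(5)\Rightarrow(3)$, I put $P:=R_0^{*}|_{\F(X)}\colon\F(X)\to c_0(I)^{*}=\ell_1(I)$, so $\Vert P\Vert\le\Vert R_0\Vert\le M$; a coordinatewise computation $P(S(e_j))_i=m_{x_j,y_j}(R_0(e_i))=T(R_0(e_i))_j=\delta_{ij}$ yields $P\circ S=\Id_{\ell_1(I)}$. The main obstacle is promoting the inequality $\Vert P\Vert\le M$ to the equality $\Vert P\Vert=M$ required by $(3)$: from $P\circ S=\Id_{\ell_1(I)}$ one obtains $\Vert\lambda\Vert_1\le\Vert P\Vert\Vert S(\lambda)\Vert$ for every $\lambda\in\ell_1(I)$, and Theorem~\ref{theo:caraLipschitintergeneral} identifies $1/M$ as the optimal such lower constant for $S$, so $\Vert P\Vert\ge M$, closing the cycle.
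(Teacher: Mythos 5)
Your proof is correct, and it relies on the same core ingredients as the paper's (the identification $S^*=T$, Theorems~\ref{rem:levanvarapouloset}, \ref{theo:varapoadjointope} and \ref{cor:caravarapoulos0}, the preadjoint correspondence for weak*-to-weak* continuous operators, and the linearization $P=T_f$), but the implication graph is arranged differently, and the difference is worth recording. The paper runs a single directed cycle $(i)\Rightarrow(ii)\Rightarrow(iii)\Rightarrow(iv)\Rightarrow(v)\Rightarrow(i)$: it passes from $(iv)$ to $(v)$ by explicitly constructing $R_0(\alpha)=\sum_{i\in I}\alpha_i f_i$ and verifying summability and boundedness by hand, and it returns to $(i)$ by evaluating $(R_0)^*$ at the molecules $m_{x,y}$, which delivers the Beurling bound $\sum_{i\in I}\lvert m_{x,y}(f_i)\rvert\le\Vert R_0\Vert\le M$ directly. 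You instead obtain $(5)$ for free by restricting $R$ to $c_0(I)$ (bypassing the explicit construction entirely), and you close the loop via $(5)\Rightarrow(3)$, taking $P=R_0^*\vert_{\F(X)}$; this forces you to upgrade $\Vert P\Vert\le M$ to the equality $\Vert P\Vert=M$ demanded by $(3)$, which you do correctly by combining $\Vert\lambda\Vert_1\le\Vert P\Vert\,\Vert S(\lambda)\Vert$ with the optimality of $1/M$ in Theorem~\ref{theo:caraLipschitintergeneral} --- an extra step the paper avoids because its $(v)\Rightarrow(i)$ lands on the Beurling condition, where only the upper bound $M$ is needed. Your additional legs $(2)\Rightarrow(1)$ (via Theorem~\ref{cor:caravarapoulos0}), $(3)\Rightarrow(2)$ and $(4)\Rightarrow(3)$ are all legitimate and standard, and the resulting graph does connect all five statements. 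In short: both arguments are sound; yours trades the paper's hands-on construction in $(iv)\Rightarrow(v)$ for shorter duality arguments, at the price of one extra appeal to Theorem~\ref{theo:caraLipschitintergeneral} to recover the exact norm of $P$.
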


\begin{proof}
$(i) \Rightarrow (ii)$ follows from Theorems \ref{rem:levanvarapouloset} and \ref{theo:varapoadjointope}.

If $(ii)$ holds, then there exists $P\in\L(\F(X),\ell_1(I))$ such that $P^*=R$ and so $\left\|P\right\|=M$. Moreover, the equality 
$$
\left(\mathrm{Id_{\ell_1(I)}}\right)^*=\mathrm{Id}_{\ell_\infty(I)}=T\circ  R=S^*\circ P^*=(P\circ S)^*
$$
yields that $P\circ S=\Id_{\ell_1(I)}$, and this proves $(iii)$. 

If $(iii)$ holds, it is known that there exists a unique mapping $f\in\Lipo(X,\ell_1(I))$ such that $P=T_f$ and $\left\|f\right\|=||T_f||$. Moreover, 
$$
e_i=(P\circ S)(e_i)=T_f(m_{x_i,y_i})=m_{x_i,y_i}(f)=\frac{f(x_i)-f(y_i)}{d(x_i,y_i)}
$$ 
for each $i\in I$, and thus $(iv)$ is proved. 

Assume that $(iv)$ holds and let $f\in\Lipo(X,\ell_1(I))$ be such that $\left\|f\right\|=M$ and $\dfrac{f(x_j)-f(y_j)}{d(x_j,y_j)}=e_j$ for all $j\in I$. Denote $f=(f_i)_{i\in I}$ and note that  
$$
\left\|f\right\|=\sup_{(x,y)\in\widetilde{X}}\frac{\left\|f(x)-f(y)\right\|_1}{d(x,y)}=\sup_{(x,y)\in\widetilde{X}}\sum_{i\in I}\frac{\left|f_i(x)-f_i(y)\right|}{d(x,y)}.
$$
Consider $R_0\colon c_0(I)\to\Lipo(X)$ given by 
$$
R_0(\alpha)=\sum_{i\in I}\alpha_if_i\qquad (\alpha=(\alpha_i)_{i\in I}\in c_0(I)).
$$
We next prove that $R_0$ is a well-defined bounded linear operator with $\left\|R_0\right\|\leq \left\|f\right\|$. Indeed, let $\alpha=(\alpha_i)_{i\in I}\in c_0(I)$. Given $x\in X$ and $F\subseteq I$ finite, we have  
$$
\sum_{i\in F}\left|\alpha_i f_i(x)\right|
\leq \Vert \alpha\Vert_\infty\sum_{i\in F}\left|f_i(x)\right|
\leq \Vert \alpha\Vert_\infty\sum_{i\in I}\left|f_i(x)\right|
=\Vert \alpha\Vert_\infty\left\|f(x)\right\|_1,
$$
and therefore $R_0(\alpha)(x)\in\R$. Consider the function $R_0(\alpha)\colon X\to\mathbb R$. Clearly, $R_0(\alpha)(0)=0$. For any $(x,y)\in\widetilde{X}$, we get that 
$$
\frac{\vert R_0(\alpha)(x)-R_0(\alpha)(y)\vert}{d(x,y)}
=\left|\sum_{i\in I}\frac{\alpha_i(f_i(x)-f_i(y))}{d(x,y)}\right|
\leq \sum_{i\in I}\frac{\left|\alpha_i\right|\left|f_i(x)-f_i(y)\right|}{d(x,y)}
\leq \Vert \alpha\Vert_\infty M,   
$$
and so $R_0(\alpha)\in\Lipo(X)$ with $\Vert R_0(\alpha)\Vert\leq M$. In view of the linearity of $R_0$ we deduce that $R_0\colon c_0(I)\to\Lipo(X)$ is continuous with $\Vert R_0\Vert\leq M$. Moreover,  
$$
(T\circ R_0)(\alpha)
=T\left(\sum_{i\in I}\alpha_if_i\right)
=\left(\sum_{i\in I}\frac{\alpha_i(f_i(x_j)-f_i(y_j))}{d(x_j,y_j)}\right)_{j\in I}
=\left(\sum_{i\in I}\alpha_ie_j(i)\right)_{j\in I}
=\alpha
$$
for all $\alpha=(\alpha_j)_{j\in I}\in c_0(I)$, and so we have (v). 

Finally, to prove $(v)\Rightarrow (i)$, assume that there exists $R_0\in\L(c_0(I),\Lipo(X))$ such that $\left\|R_0\right\|\leq M$ and $T\circ R_0=\Id_{c_0(I)}$. Define $f_i=R_0(e_i)\in\Lipo(X)$ for every $i\in I$. An easy verification gives
$$
m_{x_j,y_j}(f_i)=\frac{f_i(x_j)-f_i(y_j)}{d(x_j,y_j)}=\frac{R_0(e_i)(x_j)-R_0(e_i)(y_j)}{d(x_j,y_j)}=T(R_0(e_i))(j)=e_i(j)=\delta_{ij}
$$
for all $i,j\in I$. On the other hand, the adjoint operator $(R_0)^*\colon\Lipo(X)^*\to c_0(I)^*$ comes given by $(R_0)^*(\phi)(e_i)=(\phi\circ R_0)(e_i)=\phi(f_i)$ for all $\phi\in\Lipo(X)^*$ and $i\in I$. Hence we have that 
$$
\sum_{i\in I}|\phi(f_i)|=\left\|(R_0)^*(\phi)\right\|_1\leq \left\|R_0\right\|\left\|\phi\right\|
$$
for all $\phi\in\Lipo(X)^*$. In particular, this implies that  
$$
\sum_{i\in I}\frac{\left|f_i(x)-f_i(y)\right|}{d(x,y)}=\sum_{i\in I}\left|m_{x,y}(f_i)\right|\leq \left\|R_0\right\|||m_{x,y}||\leq M
$$
for all $(x,y)\in\widetilde{X}$, and thus 
$$
\sup_{(x,y)\in\widetilde{X}}\sum_{i\in I}\frac{\left|f_i(x)-f_i(y)\right|}{d(x,y)}\leq M,
$$
as required. This proves $(i)$ and the proof of the theorem is complete. 
\end{proof}

Some comments on Theorem \ref{cor:caravarapoulos}:

\begin{remarks}\label{rem-to-cor:caravarapoulos}
\begin{enumerate}
\item Theorem \ref{cor:caravarapoulos} is also true if in the statements $(ii)$, $(iii)$ and $(iv)$, the respective norms of $R$, $P$ and $f$ are taken less or equal than $M$.
\item The operators $P\in\L(\F(X),\ell_1)$ and $R\in\L(\ell_\infty,\Lipo(X))$ coincide -- up to isometric isomorphisms -- with the linearisation $T_f$ and its adjoint $(T_f)^*$ of the function $f\in\Lipo(X,\ell_1)$, respectively.
\item Affirmations $(ii)$ and $(v)$ show that $\left((x_i,y_i)\right)_{i\in I}$ is linear Lipschitz interpolating and linear Lipschitz $c_0(I)$-interpolating for $\Lipo(X)$, respectively. This last terminology should be self-explanatory (see Definitions \ref{def-LIC-0} and \ref{def-LIC-00}). 
\end{enumerate}
\end{remarks}

Now, the following question is natural.

\begin{question}\label{quest:varapoulos}
Let $\left((x_i,y_i)\right)_{i\in I}$ be a Lipschitz interpolating set in $\widetilde{X}$ for $\Lipo(X)$. Is there always a Beurling set of functions $(f_i)_{i\in I}$ in $\Lipo(X)$ associated to $\left((x_i,y_i)\right)_{i\in I}$?
\end{question}

Taking into account Theorems \ref{theo:caraLipschitintergeneral} and \ref{cor:caravarapoulos}, an affirmative answer to Question~\ref{quest:varapoulos} can be read in the following terms: if $\left((x_i,y_i)\right)_{i\in I}$ is a set in $\widetilde{X}$ such that the operator $S\colon\ell_1(I)\to\F(X)$ is bounded below and $N:=\max\{J>0\colon J\Vert \lambda\Vert\leq \Vert S(\lambda)\Vert,\; \forall \lambda\in\ell_1(I)\}$, then automatically there exists an operator $P\in\L(\F(X),\ell_1(I))$ with $\Vert P\Vert =1/N$ such that $P\circ S=\mathrm{Id}_{\ell_1(I)}$. 

In a more informal language, the above can be simplified as follows: if $(m_{x_i,y_i})_{i\in I}$ is $N$-equivalent to the canonical basis of $\ell_1(I)$, then such a set of molecules is $N$-complemented in $\F(X)$. Observe that this problem has been recently analysed in \cite{OstOst-24} with an affirmative answer when $N=1$ and $\left((x_i,y_i)\right)_{i\in I}$ is a finite \cite[Lemma 2.1]{OstOst-24} or a countable set \cite[Theorem 1.3]{OstOst-24}.

Following those results, let us obtain a slight generalisation of \cite[Theorem 1.3]{OstOst-24}, but whose proof is a literal translation of the original proof to the setting of arbitrary sets.

\begin{theorem}\label{theo:ovstroskii}
Let $\left((x_i,y_i)\right)_{i\in I}$ be a Lipschitz interpolating set in $\widetilde{X}$ for $\Lipo(X)$ whose Lipschitz interpolating constant is $1$. Then there exists a Beurling set of functions in $\Lipo(X)$ for $\left((x_i,y_i)\right)_{i\in I}$.
\end{theorem}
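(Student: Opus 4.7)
The plan is to reduce the arbitrary-index statement to the finite case covered by \cite[Lemma 2.1]{OstOst-24}, and then extract a Beurling set from a weak*-cluster point in a suitable product space.

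First I observe that the hypothesis localises to every finite subfamily: for each finite $F\subseteq I$, the restricted family $((x_i,y_i))_{i\in F}$ is Lipschitz interpolating for $\Lipo(X)$ with constant $1$. Indeed, given $\alpha\in B_{\ell_\infty(F)}$, its zero extension $\tilde\alpha\in B_{\ell_\infty(I)}$ admits by hypothesis an interpolant $f\in B_{\Lipo(X)}$, so that the restricted operator $T_F\colon\Lipo(X)\to\ell_\infty(F)$ reaches $\alpha$ with a norm-one preimage. Applying \cite[Lemma 2.1]{OstOst-24} to each such $F$ yields a finite Beurling family $(f_i^F)_{i\in F}\subseteq B_{\Lipo(X)}$, which I extend by setting $f_i^F:=0$ for $i\in I\setminus F$. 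The resulting point $\phi^F=(f_i^F)_{i\in I}$ lies in the compact space $K:=\prod_{i\in I}(B_{\Lipo(X)},w^*)$, compact by Tychonoff combined with Banach--Alaoglu applied to the duality $\Lipo(X)\cong\F(X)^*$.

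Indexing $(\phi^F)_{F\in\mathcal{F}}$ by the directed set $\mathcal{F}$ of finite subsets of $I$ ordered by inclusion, I extract a subnet $(\phi^{F_\alpha})$ converging to some $\phi=(f_i)_{i\in I}\in K$. By the standard cofinality property of subnets, for any finite $G\subseteq I$ one has $F_\alpha\supseteq G$ eventually. The coordinatewise weak*-convergence $f_i^{F_\alpha}\to f_i$, evaluated against $\delta_0\in\F(X)$, gives $f_i(0)=0$, and evaluated against $m_{x_j,y_j}\in\F(X)$ (for $\alpha$ large enough that $\{i,j\}\subseteq F_\alpha$) gives $m_{x_j,y_j}(f_i)=\delta_{ij}$. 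For the summability condition, fix $(x,y)\in\widetilde{X}$ and a finite $G\subseteq I$; once $F_\alpha\supseteq G$ we have
\[
\sum_{i\in G}\frac{|f_i^{F_\alpha}(x)-f_i^{F_\alpha}(y)|}{d(x,y)}\leq 1,
\]
and pointwise convergence of this finite sum yields $\sum_{i\in G}|f_i(x)-f_i(y)|/d(x,y)\leq 1$. Taking the supremum over all finite $G\subseteq I$ delivers the Beurling bound with constant $1$.

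The delicate point is the subnet-cofinality step: it is what ensures that both the exact interpolation identities and the uniform bound on finite partial sums persist under the passage to the weak*-limit; in particular, the zero-extension device is harmless because for any fixed $i$ the values $f_i^{F_\alpha}(z)$ eventually come from the genuine Beurling construction. With this set up, no further analytical input beyond the finite case of Ostrovskaa--Ostrovskii is required.
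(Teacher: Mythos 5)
Your proposal is correct and follows essentially the same route as the paper: reduce to finite subfamilies (where the interpolation constant $1$ forces $\mathrm{span}\{m_{x_i,y_i}\colon i\in F\}$ to be isometrically $\ell_1^{|F|}$, so that \cite[Lemma 2.1]{OstOst-24} applies), and then pass to a weak*-limit along the net of finite subsets ordered by inclusion. Your use of a single convergent subnet in the product $\prod_{i\in I}(B_{\Lipo(X)},w^*)$ is in fact a slightly cleaner way to organize the limiting step than the paper's coordinatewise choice of weak*-cluster points, since it guarantees the simultaneous convergence needed for the finite-sum estimate to survive in the limit.
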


\begin{proof}
Denote $\P_{\F}:=\{F\subseteq I\colon F\mbox{ is finite}\}$, which is a directed set with the classical order given by the inclusion. Let $F\in\P_{\F}$ be of cardinality $n$ with $n\in\N$. Since $\mathrm{span}\{m_{x_i,y_i}\colon i\in F\}$ is isometrically $\ell_1^n$, then \cite[Theorem 2.1]{OstOst-20} implies that $\{x_i,y_i\colon i\in F\}$ is a set of pairs forming a minimum-weight matching in $K(\{x_i,y_i\colon i\in F\})$ in the language of \cite[Lemma 2.1]{OstOst-24} (see also \cite[Corollary 2.7]{AliRZ-20} and the subsequent paragraph). Because of that, the proof of \cite[Lemma 2.1]{OstOst-24} implies that, for every $i\in F$, there exists $f_{i,F}\in S_{\Lipo(X)}$ such that $m_{x_j,y_j}(f_i)=\delta_{ij}$ for all $j\in F$ and the operator $P_F\colon\F(X)\to\mathrm{span}\{m_{x_i,y_i}\colon i\in F\}$, defined by $P_F(\gamma)=\sum_{i\in F}\gamma(f_{i,F})m_{x_i,y_i}$, is a surjective norm-one projection. Since $\mathrm{span}\{m_{x_i,y_i}: i\in F\}$ is isometrically $\ell_1^n$, we get
$$
1\geq \left\Vert \sum_{i\in F}m_{x,y}(f_{i,F})m_{x_i,y_i}\right\Vert=\sum_{i\in F}\vert m_{x,y}(f_{i,F})\vert
$$
holds for every $(x,y)\in \widetilde{X}$. Since $f_{i,F}\in B_{\Lipo(X)}$ for every $F\in\P_{\F}$ and $i\in F$, we can select, for any $i\in I$, a $w^*$-cluster point $f_i$ of the net $(f_{i,F})_{F\in S, F\supset\{i\}}$. 

In order to prove that $(f_i)_{i\in I}$ is a Beurling set in $\Lipo(X)$ associated to $\left((x_i,y_i)\right)_{i\in I}$, take $(x,y)\in \widetilde{X}$. Given any finite set $G\subseteq I$, select $F\in\P_{\F}$ with $G\subseteq F$. Now we get, by the properties of $f_{i,F}$, that
$$
\sum_{i\in G} \vert m_{x,y}(f_{i,F})\vert\leq \sum_{i\in F}\vert m_{x,y}(f_{i,F})\vert\leq 1.
$$
Taking into account that the above inequality holds true for every $F\geq G$ in $\P_{\F}$ and that $f_i$ is a $w^*$-cluster point of the net $(f_{i,F})_{F\in S, F\supset\{i\}}$ for every $i\in I$, we infer that
$$
\sum_{i\in G}\vert m_{x,y}(f_{i})\vert\leq 1.
$$
Now, since the above inequality is satisfied for every finite set $G\subseteq I$, then, by definition, there exists the following sum
$$
\sum_{i\in I}\vert m_{x,y}(f_{i})\vert\leq 1.
$$
Since $(x,y)\in \widetilde{X}$ was arbitrary, we conclude that
$$
\sup\limits_{(x,y)\in \widetilde{X}} \sum_{i\in I}\frac{\vert f_i(x)-f_i(y)\vert}{d(x,y)}
=\sup\limits_{(x,y)\in \widetilde{X}} \sum_{i\in I}\vert m_{x,y}(f_i)\vert\leq 1.
$$
According to Definition~\ref{def:Berurlingeq} we get that $(f_i)_{i\in I}$ is a Beurling set of functions in $\Lipo(X)$ associated to $\left((x_i,y_i)\right)_{i\in I}$, as requested.
\end{proof}

We now give two more examples where Question~\ref{quest:varapoulos} has an affirmative answer. In the former example, we will exploit the well known property that $\ell_\infty^2$ and $\ell_1^2$ are isometrically isomorphic Banach spaces and the fact that $\ell_1$-spaces have the lifting property.

\begin{example}
Let $X$ be a pointed metric space and let $\left\{(x_1,y_1), (x_2,y_2)\right\}$ be an interpolating set for $\Lipo(X)$ in $\widetilde{X}$. Let $T:\Lipo(X)\to \ell_\infty^2$ be its associate interpolating operator with Lipschitz interpolating constant $M$. We claim that there exists a Beurling set $\{f_1,f_2\}$ in $\Lipo(X)$ associated to $\left\{(x_1,y_1), (x_2,y_2)\right\}$. 

Indeed, consider the onto linear isometry $q:\ell_1^2\to \ell_\infty^2$ defined by $q(x,y)=(x+y,x-y)$. Since $T$ is onto, for $i=1,2$ there exists $f_i\in \Lipo(X)$ so that $T(f_i)=q(e_i)$ and $\Vert f_i\Vert\leq M$. Now define $\varphi: \ell_1^2\to \Lipo(X)$ by $\varphi(a,b)=a f_1+b f_2$. Observe that $\varphi\in\L(\ell_1^2,\Lipo(X))$ with $\Vert \varphi\Vert\leq M$, and it holds that $T\circ\varphi=q$. So $R:=\varphi\circ q^{-1}\in\L(\ell_\infty^2,\Lipo(X))$ with $\Vert  R\Vert\leq M$ and $T\circ R=\Id_{\ell_\infty^2}$, and the proof is finished by Theorem \ref{cor:caravarapoulos0}.
\end{example}

\begin{example}
Let $X$ be a finite set and consider $D:=\left((x_i,y_i)\right)_{i\in I}\subseteq \widetilde{X}$. Assume that $(X,D)$ defines a connected graph. The following are equivalent:
\begin{enumerate}
\item $(X,D)$ contains no cycle.
\item The Lipschitz interpolating operator $T:\Lipo(X)\to \ell_\infty(I)$ associated to $\left((x_i,y_i)\right)_{i\in I}$ is onto.
\item $\left((x_i,y_i)\right)_{i\in I}$ has a finite Beurling set of functions in $\Lipo(X)$.
\end{enumerate}

$(ii)\Rightarrow(i)$: Assume that $(M,D)$ contains a cycle. Then there exists $i_1,\ldots, i_n\subseteq I$ such that $x_i=y_{i+1}$ for every $1,\ldots,n-1$ and $x_n=y_1$. Then there is no function $f\in\Lipo(X)$ such that $m_{x_1,y_1}(f)=1$ and $m_{x_i,y_i}(f)=0$ for every $2\leq i\leq n$.  Indeed, if such a function existed, then $f(x_1)=f(y_1)+d(x_1,y_1)$. Moreover, since $f(x_i)=f(y_i)$ for every $2\leq i\leq n$, then we would get $f(x_i)=f(y_1)+d(x_1,y_1)$. This would imply $f(x_n)=f(y_1)+d(x_1,y_1)$. But $x_n=y_1$ would imply $f(y_1)=f(y_1)+d(x_1,y_1)$, which is impossible. 

$(iii)\Rightarrow(ii)$ is obvious. Let us now prove $(i)\Rightarrow(iii)$. Assume that $(M,D)$ is a connected graph containing no cycle. By the assumption we can assume that $(x_1,y_1),\ldots, (x_n,y_n)$ is a simple path and, up to a relabeling, we can assume with no loss of generality that $y_1=0$. 

We claim that in the above situation $T$ is a bijection. Indeed, given $\alpha=(\alpha_1,\ldots, \alpha_n)\in \ell_\infty^n$, we claim that there exists a unique $f\in \Lipo(X)$ such that $T(f)=\alpha$. Indeed, given $x\in X$ there exists a unique subset $i_1,\ldots, i_k\subseteq I$ such that $y_{i_1}=0$ and $x_{i_k}=x$. Now the condition $f(x_{i_j})=f(y_{i_j}) +\alpha_{i_j}d(x_{i_j},y_{i_j})$ holds for every $1\leq j\leq k$ and $f(y_{i_1})=0$ implies $f(x_{i_k})$ is uniquely defined. Moreover $f$ is well defined since the path joining $0$ and $x$ is unique. 

Since $T:\Lipo(X)\to \ell_\infty^n$ is bijective and continuous, its inverse $ R:\ell_\infty^n\to \Lipo(X)$ is continuous too, and it is not difficult to prove that $\Vert  R\Vert=M$. Taking $f_i= R(e_i)$ for $i=1,\ldots,n$, it is immediate that $(f_i)_{i=1}^n$ is a Beurling set in $\Lipo(X)$ for $\left((x_i,y_i)\right)_{i\in I}$, and the proof is finished.
\end{example}

We now study the connection between the Lipschitz interpolation of a set $\left((x_i,y_i)\right)_{i\in I}$ in $\widetilde{X}$ and the separation which provides the following distance. 

\begin{definition}
Let $X$ be a pointed metric space. Define the Lipschitz-molecular metric (L-molecular metric, for short) on $\widetilde{X}$ by 
$$
\rho((x,y),(u,v))=\left\|m_{x,y}-m_{u,v}\right\|
$$
for any $(x,y),(u,v)\in\widetilde{X}$. For $(x,y)\in\widetilde{X}$ and $0<r\leq 2$, define
$$
D_\rho((x,y),r)=\left\{(u,v)\in\widetilde{X}\colon \rho((x,y),(u,v))<r\right\}.
$$
A set $\left((x_i,y_i)\right)_{i\in I}$ in $\widetilde{X}$ is said to be L-molecularly $r$-separated if 
$$
\rho((x_i,y_i),(x_j,y_j))>r\qquad (i,j\in I,\; i\neq j),
$$
and the constant of separation of $\left((x_i,y_i)\right)_{i\in I}$ is defined as 
$$
\inf\left\{\rho((x_i,y_i),(x_j,y_j))\colon i,i\in ,\; i\neq j\right\}.
$$ 
\end{definition}

L-molecularly $r$-separated sets $\left((x_i,y_i)\right)_{i\in I}\subseteq\widetilde{X}$ are characterized in \cite[Lemma 3.12]{CasALL-20} under the name of uniformly discrete set of molecules as follows. 

\begin{lemma}\label{lemaCascales}\cite[Lemmas 1.3 and 3.12]{CasALL-20}. 
Let $(x,y),(u,v)\in\widetilde{X}$. Then
$$
\rho((x,y),(u,v))\leq 2\frac{d(x,u)+d(y,v)}{\max\{d(x,y),d(u,v)\}}.
$$
If, moreover, $\rho((x,y),(u,v))<1$, then 
$$
\frac{\max\{d(x,u),d(y,v)\}}{\min\{d(x,y),d(u,v)\}}\leq \rho((x,y),(u,v)).
$$
As an application, a set $\left((x_i,y_i)\right)_{i\in I}$ in $\widetilde{X}$ is L-molecularly $r$-separated if and only if $(r/2)d(x_i,y_i)\leq d(x_i,x_j)+d(y_i,y_j)$ for all $i,j\in I$ with $i\neq j$. $\hfill$ $\Box$
\end{lemma}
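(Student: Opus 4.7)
My plan is to prove each of the two inequalities by a direct argument in $\F(X)$: the first by decomposing the difference $m_{x,y}-m_{u,v}$ as a sum of two explicit elements, the second by testing against carefully chosen cutoff Lipschitz functions. The characterization of L-molecularly $r$-separated sets will then drop out from these estimates.

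For the first inequality, I would use the decomposition
\begin{equation*}
m_{x,y}-m_{u,v}=\frac{(\delta_x-\delta_u)-(\delta_y-\delta_v)}{d(u,v)}+\left(\frac{1}{d(x,y)}-\frac{1}{d(u,v)}\right)(\delta_x-\delta_y)
\end{equation*}
and invoke the triangle inequality in $\F(X)$ together with the identity $\|\delta_a-\delta_b\|_{\F(X)}=d(a,b)$. The first summand is bounded by $(d(x,u)+d(y,v))/d(u,v)$, while the second equals $|d(u,v)-d(x,y)|/d(u,v)$, which is in turn at most $(d(x,u)+d(y,v))/d(u,v)$ by the reverse triangle inequality applied to $d(x,y)$ and $d(u,v)$. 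Summing yields $\|m_{x,y}-m_{u,v}\|\leq 2(d(x,u)+d(y,v))/d(u,v)$, and running the same argument with the roles of $(x,y)$ and $(u,v)$ swapped gives the analogous bound with $d(x,y)$ in the denominator, so the estimate with $\max\{d(x,y),d(u,v)\}$ follows by taking the smaller of the two.

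For the second inequality, set $\varepsilon:=\rho((x,y),(u,v))<1$ and, by symmetry, assume $d(x,y)\leq d(u,v)$, so $\min\{d(x,y),d(u,v)\}=d(x,y)$. I would test against the $1$-Lipschitz function
\begin{equation*}
f(z):=\min\{d(z,y),d(x,y)\}-\min\{d(0,y),d(x,y)\}\in B_{\Lipo(X)},
\end{equation*}
which satisfies $m_{x,y}(f)=1$. The duality bound $|1-m_{u,v}(f)|\leq\varepsilon$, after a short case analysis on whether each of $d(u,y),d(v,y)$ lies above or below $d(x,y)$, yields $d(v,y)\leq\varepsilon\,d(x,y)$; the two configurations for which the cutoff forces $m_{u,v}(f)\leq 0$ are excluded outright, because they would give $|1-m_{u,v}(f)|\geq 1>\varepsilon$. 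A symmetric argument with $g(z):=\min\{d(z,x),d(x,y)\}-\min\{d(0,x),d(x,y)\}$ (for which $m_{x,y}(g)=-1$) produces $d(u,x)\leq\varepsilon\,d(x,y)$, and combining the two bounds delivers the desired estimate.

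Once both inequalities are at hand, the application is routine: if $((x_i,y_i))_{i\in I}$ is L-molecularly $r$-separated, the first inequality gives $r<\rho((x_i,y_i),(x_j,y_j))\leq 2(d(x_i,x_j)+d(y_i,y_j))/\max\{d(x_i,y_i),d(x_j,y_j)\}$, whence $(r/2)d(x_i,y_i)\leq d(x_i,x_j)+d(y_i,y_j)$ for all $i\neq j$, and the converse direction follows by using the second inequality in the regime $\rho<1$ to rule out the existence of a pair with $\rho\leq r$ under the stated hypothesis. The main obstacle is the second inequality: everything else reduces to the triangle inequality in $\F(X)$ and the identity $\|\delta_a-\delta_b\|_{\F(X)}=d(a,b)$, whereas the second estimate depends on identifying the right cutoff test function and discarding the configurations incompatible with $\rho<1$ via the short case analysis described above.
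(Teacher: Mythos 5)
Your treatment of the two displayed inequalities is correct and, unlike the paper (which offers no argument at all, simply citing \cite[Lemmas 1.3 and 3.12]{CasALL-20}), self-contained. The decomposition of $m_{x,y}-m_{u,v}$ is an exact identity, both summands are controlled as you say via $\|\delta_a-\delta_b\|=d(a,b)$ and the reverse triangle inequality, and symmetrizing gives the $\max$ in the denominator. For the second inequality, with $d(x,y)\le d(u,v)$ and $f(z)=\min\{d(z,y),d(x,y)\}-\min\{d(0,y),d(x,y)\}$ one gets $(1-\varepsilon)d(u,v)\le f(u)-f(v)\le d(x,y)-\min\{d(v,y),d(x,y)\}$, hence $\min\{d(v,y),d(x,y)\}\le\varepsilon\,d(x,y)<d(x,y)$, which already forces $d(v,y)\le\varepsilon\,d(x,y)$ without any case analysis; the symmetric function in $x$ gives $d(u,x)\le\varepsilon\,d(x,y)$. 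So far, so good.

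The genuine gap is in the ``if'' direction of the application, which you declare routine. From the second inequality, a pair with $\rho_{ij}:=\rho((x_i,y_i),(x_j,y_j))<1$ satisfies $d(x_i,x_j)+d(y_i,y_j)\le 2\rho_{ij}\min\{d(x_i,y_i),d(x_j,y_j)\}$; combined with the hypothesis $(r/2)d(x_i,y_i)\le d(x_i,x_j)+d(y_i,y_j)$ this only yields $\rho_{ij}\ge r/4$, which does not contradict $\rho_{ij}\le r$. In fact the same-constant converse is false: in $X=\mathbb{R}$ with base point $0$, take $(x_1,y_1)=(1,0)$ and $(x_2,y_2)=(1+t,t)$; then $m_{x_1,y_1}-m_{x_2,y_2}=(\delta_1-\delta_{1+t})+(\delta_t-\delta_0)$ has norm at most $2t$, while $d(x_1,x_2)+d(y_1,y_2)=2t$ and $d(x_1,y_1)=d(x_2,y_2)=1$, so the condition $(r/2)\cdot 1\le 2t$ holds for $r=3t$ even though $\rho_{12}\le 2t<3t$, i.e.\ the pair is not L-molecularly $3t$-separated. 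What your two inequalities actually establish is the two-constant equivalence of \cite{CasALL-20}: $r$-separation implies the stated condition, and the stated condition implies $r'$-separation for every $r'<\min\{1,r/4\}$. Since the paper only ever uses the forward implication (in Proposition \ref{prop:dist0interseqcompact}), nothing downstream is affected, but your proof as written cannot close the converse with the constant $r$ appearing in the statement, and you should either record the loss of constant or restate the equivalence qualitatively.
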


Notice that a set $\left((x_i,y_i)\right)_{i\in I}$ in $\widetilde{X}$ is L-molecularly $2r$-separated if and only if $D_\rho((x_i,y_i),r)\cap D_\rho((x_j,y_j),r)=\emptyset$ whenever $i,j\in I$ with $i\neq j$.

We now show that Lipschitz interpolating sets for $\Lipo(X)$ are necessarily separated in the following sense. Some similar results were stated with interpolating sequences for Bloch functions, weighted analytic functions and uniform algebras with respect to the pseudo-hyperbolic metric (see \cite{Ate-92,BlaGalLinMir-19,GalGamLin-04,GalLinMir-09}).

\begin{proposition}\label{prop-2}
Let $\left((x_i,y_i)\right)_{i\in I}$ be a Lipschitz interpolating set for $\Lipo(X)$ in $\widetilde{X}$ with Lipschitz interpolation constant $M$. Then $\left((x_i,y_i)\right)_{i\in I}$ is L-molecularly $1/M$-separated.
\end{proposition}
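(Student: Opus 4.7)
My plan is to leverage Theorem~\ref{theo:caraLipschitintergeneral} directly, testing the lower bound it provides on specific elementary elements of $\ell_1(I)$.

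First, by Theorem~\ref{theo:caraLipschitintergeneral} applied to our hypothesis, the operator $S\colon\ell_1(I)\to\F(X)$ determined by $S(e_i)=m_{x_i,y_i}$ is an into isomorphism that satisfies
$$
\frac{1}{M}\|\lambda\|_1 \leq \|S(\lambda)\|\qquad (\lambda\in\ell_1(I)).
$$

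Next, fix $i,j\in I$ with $i\neq j$ and apply the above inequality to $\lambda=e_i-e_j\in\ell_1(I)$. On one side, $\|e_i-e_j\|_1=2$; on the other, by linearity, $S(e_i-e_j)=m_{x_i,y_i}-m_{x_j,y_j}$. Substituting in the displayed inequality yields
$$
\frac{2}{M} \leq \|m_{x_i,y_i}-m_{x_j,y_j}\| = \rho((x_i,y_i),(x_j,y_j)).
$$
Since $M\geq 1>0$, we have $2/M>1/M$, so $\rho((x_i,y_i),(x_j,y_j))>1/M$, and the arbitrariness of $i\neq j$ in $I$ gives the desired L-molecular $1/M$-separation.

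I do not foresee any serious obstacle: the work has been done in establishing Theorem~\ref{theo:caraLipschitintergeneral}, and the rest is just a suitable choice of test element. It is worth noting that the argument in fact yields the slightly stronger bound $\rho((x_i,y_i),(x_j,y_j))\geq 2/M$, but the formulation with $1/M$ is the one stated in the proposition.
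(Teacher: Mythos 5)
Your proof is correct, and it rests on the same underlying duality as the paper's: the Lipschitz interpolation constant $M$ is dual to the lower bound $\frac{1}{M}\|\lambda\|_1\le\|S(\lambda)\|$ (Proposition~\ref{prop:caralipsconstant} / Theorem~\ref{theo:caraLipschitintergeneral}). The difference is in the test element. The paper argues directly: it picks $f\in\Lipo(X)$ with $\|f\|\le M$ interpolating the values $1$ at $i$ and $0$ at $j$, and evaluates $m_{x_i,y_i}-m_{x_j,y_j}$ against $f$ to get $\rho((x_i,y_i),(x_j,y_j))\ge 1/M$. You instead test the lower bound for $S$ at $\lambda=e_i-e_j$, which amounts to interpolating $1$ at $i$ and $-1$ at $j$; since $\|e_i-e_j\|_1=2$, this yields the sharper estimate $\rho((x_i,y_i),(x_j,y_j))\ge 2/M$. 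Two small dividends of your route are worth noting. First, the definition of L-molecular $r$-separation in the paper requires the \emph{strict} inequality $\rho>r$; the paper's argument only delivers $\rho\ge 1/M$, whereas your bound $\ge 2/M>1/M$ gives the strict inequality for free. Second, the factor $2$ propagates through Lemma~\ref{lemaCascales} to improve the metric consequence in Proposition~\ref{prop:dist0interseqcompact} from $d(x_i,y_i)\le 2M\left(d(x_i,x_j)+d(y_i,y_j)\right)$ to $d(x_i,y_i)\le M\left(d(x_i,x_j)+d(y_i,y_j)\right)$.
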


\begin{proof}
Let $i,j\in I$ be with $i\neq j$. We can find a $f\in\Lipo(X)$ with $\left\|f\right\|\leq M$ such that
$$
\frac{f(x_i)-f(y_i)}{d(x_i,y_i)}=1\quad \text{and}\quad \frac{f(x_j)-f(y_j)}{d(x_j,y_j)}=0.
$$ 
It follows that
\begin{align*}
\rho((x_i,y_i),(x_j,y_j))&\geq \frac{1}{M}\left|\delta_{(x_i,y_i)}(f)-\delta_{(x_j,y_j)}(f)\right|\\
&=\frac{1}{M}\left|\frac{f(x_i)-f(y_i)}{d(x_i,y_i)}-\frac{f(x_j)-f(y_j)}{d(x_j,y_j)}\right|=\frac{1}{M}
\end{align*}
for all $i,j\in I$ with $i\neq j$. Hence $\left((x_i,y_i)\right)_{i\in I}$ is L-molecularly $1/M$-separated.
\end{proof}

We finish this section with the study of the stability of Lipschitz interpolating sets for $\Lipo(X)$ in $\widetilde{X}$. See \cite[Proposition 7]{Ate-92} for a similar result in the study of interpolating sequences for the derivatives of Bloch functions. 

\begin{proposition}\label{prop-3}
Let $\left((x_i,y_i)\right)_{i\in I}$ be a Lipschitz interpolating set for $\Lipo(X)$ in $\widetilde{X}$. Then there exists a number $\delta>0$, depending on $\left((x_i,y_i)\right)_{i\in I}$, such that if $\left((u_i,v_i)\right)_{i\in I}$ is a set in $\widetilde{X}$ satisfying that $\rho((x_i,y_i),(u_i,v_i))<\delta$ for all $i\in I$, then $\left((u_i,v_i)\right)_{i\in I}$ is Lipschitz interpolating for $\Lipo(X)$.
\end{proposition}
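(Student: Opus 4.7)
The plan is to translate the problem to the language of the operator $S\colon\ell_1(I)\to \F(X)$ defined by $S(e_i)=m_{x_i,y_i}$ via Theorem~\ref{theo:caraLipschitintergeneral}, and then run a standard perturbation of a bounded-below operator. The crucial observation is that by the very definition of the L-molecular metric $\rho$, one has $\Vert m_{x_i,y_i}-m_{u_i,v_i}\Vert=\rho((x_i,y_i),(u_i,v_i))$, so pointwise control in $\rho$ transfers directly into control of the operator norm $\Vert S-S'\Vert$, where $S'\colon\ell_1(I)\to\F(X)$ is given by $S'(e_i)=m_{u_i,v_i}$.

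First, I would let $M$ be the Lipschitz interpolation constant of $\left((x_i,y_i)\right)_{i\in I}$. By Theorem~\ref{theo:caraLipschitintergeneral}, the associated operator $S$ is bounded below, namely $\Vert S(\lambda)\Vert\geq (1/M)\Vert\lambda\Vert_1$ for every $\lambda\in\ell_1(I)$. I would then set $\delta:=1/(2M)$ (any positive number strictly smaller than $1/M$ would do), and assume $\left((u_i,v_i)\right)_{i\in I}$ is a set in $\widetilde{X}$ with $\rho((x_i,y_i),(u_i,v_i))<\delta$ for every $i\in I$. By Proposition~\ref{prop-new}, the operator $S'\colon\ell_1(I)\to\F(X)$ given by $S'(e_i)=m_{u_i,v_i}$ is well defined, linear and continuous with $\Vert S'\Vert\leq 1$.

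Next, I would estimate, for any finitely supported $\lambda=(\lambda_i)_{i\in I}\in\ell_1(I)$,
$$
\Vert S(\lambda)-S'(\lambda)\Vert
=\left\Vert \sum_{i\in I}\lambda_i\bigl(m_{x_i,y_i}-m_{u_i,v_i}\bigr)\right\Vert
\leq \sum_{i\in I}|\lambda_i|\,\rho((x_i,y_i),(u_i,v_i))
\leq \delta\Vert\lambda\Vert_1,
$$
and extend by density to all of $\ell_1(I)$. Combining this with the lower bound on $S$ gives
$$
\Vert S'(\lambda)\Vert\geq \Vert S(\lambda)\Vert-\Vert S(\lambda)-S'(\lambda)\Vert\geq \left(\frac{1}{M}-\delta\right)\Vert\lambda\Vert_1=\frac{1}{2M}\Vert\lambda\Vert_1
$$
for every $\lambda\in\ell_1(I)$, so $S'$ is an into isomorphism. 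Applying Theorem~\ref{theo:caraLipschitintergeneral} in the reverse direction, I conclude that $\left((u_i,v_i)\right)_{i\in I}$ is Lipschitz interpolating for $\Lipo(X)$, with Lipschitz interpolation constant at most $2M$.

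There is no real obstacle here: the argument is a direct perturbation of a bounded-below operator, and the only insight needed is the coincidence between $\rho$ and the norm distance of molecules in $\F(X)$. The choice of $\delta$ is uniform over perturbations of each individual pair $(x_i,y_i)$ because the summation-by-$\lambda_i$ step decouples the indices, so one does not need any compactness or joint control beyond the pointwise bound $\rho((x_i,y_i),(u_i,v_i))<\delta$.
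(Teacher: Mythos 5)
Your proof is correct, but it takes a different route from the paper's. The paper perturbs the interpolating operator $T\colon\Lipo(X)\to\ell_\infty(I)$ directly: it cites the fact that the set of surjective operators between two Banach spaces is open in $\L(E,F)$ (from Lang's book), observes that $\left\|T-T_0\right\|=\sup_{i\in I}\rho((x_i,y_i),(u_i,v_i))$, and concludes in two lines. You instead pass to the predual operator $S\colon\ell_1(I)\to\F(X)$ via Theorem~\ref{theo:caraLipschitintergeneral} and carry out the perturbation by hand, using the lower bound $\Vert S(\lambda)\Vert\geq(1/M)\Vert\lambda\Vert_1$ and the estimate $\Vert S-S'\Vert\leq\sup_{i}\rho((x_i,y_i),(u_i,v_i))$ to show $S'$ remains bounded below. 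The two arguments are dual to one another and rest on the same identity between $\rho$ and the operator-norm distance of the (pre)interpolating operators. What yours buys is self-containedness and quantitative information: an explicit admissible $\delta=1/(2M)$ expressed in terms of the original interpolation constant, and the bound $2M$ on the interpolation constant of the perturbed set, neither of which the paper's argument yields (its $\delta$ comes from a non-constructive openness theorem). What the paper's buys is brevity. One small point of rigor in your write-up: when you invoke Theorem~\ref{theo:caraLipschitintergeneral} ``in the reverse direction,'' what you actually use is that $S'$ being an into isomorphism forces $(S')^*$ (which equals the interpolating operator of $\left((u_i,v_i)\right)_{i\in I}$ by Proposition~\ref{prop-new}) to be surjective; your lower bound $1/(2M)$ need not be the maximal such constant, so the new interpolation constant is only bounded above by $2M$, exactly as you state.
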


\begin{proof}
Since the set of all surjective operators between two Banach spaces $E$ and $F$ is open in $\L(E,F)$ by \cite[Chapter XV, Theorem 3.4]{Lan-93} and the Lipschitz interpolating operator $T\colon\Lipo(X)\to\ell_\infty(I)$ associated to $\left((x_i,y_i)\right)_{i\in I}$ for $\Lipo(X)$ is surjective, we can find a number $\delta=\delta(T)>0$ such that $T_0$ is surjective whenever $T_0\in\L(\Lipo(X),\ell_\infty(I))$ and $\left\|T-T_0\right\|\leq\delta$. 

Let $\left((u_i,v_i)\right)_{i\in I}$ be a set in $\widetilde{X}$ such that $\rho((x_i,y_i),(u_i,v_i))<\delta$ for all $i\in I$ and let $T_0\colon\Lipo(X)\to\ell_\infty(I)$ be the Lipschitz interpolating operator associated to $\left((u_i,v_i)\right)_{i\in I}$ for $\Lipo(X)$. Clearly, 
$$
\left\|T-T_0\right\|=\sup_{i\in I}\rho((x_i,y_i),(u_i,v_i))\leq\delta,
$$
and so $T_0$ is surjective.
\end{proof}


\section{Lipschitz interpolating sequences on compact metric spaces}\label{sect:compact}

Let $X$ be a pointed metric space and let $((x_n,y_n))_{n=1}^\infty$ be a Lipschitz interpolating sequence in $\widetilde{X}$. As we have exposed in Proposition \ref{prop-new}, the associate Lipschitz interpolating operator $T:\Lipo(X)\to\ell_\infty$ is the adjoint operator of $S\in\L(\ell_1,\F(X))$ given by
$$
S(\lambda):=\sum_{n=1}^\infty \lambda_n m_{x_n,y_n}\qquad (\lambda=(\lambda_n)_{n=1}^\infty\in\ell_1).
$$
There are some cases in which the Lipschitz free space $\F(X)$ is itself a dual space, say $\mathcal F(X)\cong Z^*$ for some closed linear subspace $Z\subseteq \Lipo(X)$. In this context, it would be natural to wonder when $S$ is an adjoint operator itself, say $S=R^*$, where $R:Z\to c_0$ is a surjective operator. In such case, we would obtain that $R$ would be, in an informal language, a \textit{$c_0$ interpolating operator for $Z$}. Observe that such ideas were previously considered on interpolating sequences for Bloch type spaces (see \cite[Theorem 2.4]{MirMal-22}).

Since one of the most classical setting in which the existence of a predual of $\mathcal F(X)$ is found is in the case that $X$ is a compact metric space, we will focus first on studying which property a sequence in $\widetilde{X}$ must satisfy to be a Lipschitz interpolating sequence. Note that in the context of compact pointed metric spaces (in particular, they are separable spaces), we may restrict ourselves to work with sequences $\left((x_n,y_n)\right)_{n=1}^\infty$ in $\widetilde{X}$.

We now give a necessary condition for a sequence $\left((x_n,y_n)\right)_{n=1}^\infty$ in $\widetilde{X}$ to be Lipschitz interpolating for $\Lipo(X)$. 

\begin{proposition}\label{prop:dist0interseqcompact}
Let $X$ be a compact pointed metric space and let $\left((x_n,y_n)\right)_{n=1}^\infty$ be a Lipschitz interpolating sequence for $\Lipo(X)$ in $\widetilde{X}$. Then $\left(d(x_n,y_n)\right)_{n=1}^\infty\to 0$ as $n\to\infty$.
\end{proposition}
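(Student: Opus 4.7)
The plan is to argue by contradiction, exploiting compactness together with the separation property already established in Proposition~\ref{prop-2}. Suppose that $(d(x_n,y_n))_{n=1}^\infty$ does not converge to $0$. Then there exist $\varepsilon>0$ and a subsequence, which for simplicity I will still denote by $((x_n,y_n))_{n=1}^\infty$, such that $d(x_n,y_n)\geq\varepsilon$ for every $n\in\N$.

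Since $X$ is compact, by successively extracting subsequences I may assume that $(x_n)_{n=1}^\infty$ and $(y_n)_{n=1}^\infty$ converge in $X$, say to $x$ and $y$ respectively. Clearly $d(x,y)\geq\varepsilon>0$, so $(x,y)\in\widetilde{X}$. The key observation is that, combining the estimate from Lemma~\ref{lemaCascales} with $\max\{d(x_n,y_n),d(x_m,y_m)\}\geq\varepsilon$, one has
$$
\rho((x_n,y_n),(x_m,y_m))\leq 2\,\frac{d(x_n,x_m)+d(y_n,y_m)}{\max\{d(x_n,y_n),d(x_m,y_m)\}}\leq\frac{2}{\varepsilon}\bigl(d(x_n,x_m)+d(y_n,y_m)\bigr),
$$
for all $n,m\in\N$. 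Since $(x_n)$ and $(y_n)$ are Cauchy in $X$, the right-hand side tends to $0$ as $n,m\to\infty$.

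Now, being a subsequence of a Lipschitz interpolating sequence, the extracted sequence $((x_n,y_n))_{n=1}^\infty$ remains Lipschitz interpolating for $\Lipo(X)$ with a Lipschitz interpolation constant at most $M$ (any solution $f\in\Lipo(X)$ to the interpolation problem on the full index set extended by zeros on the complement produces a solution to the restricted problem with no larger norm). Proposition~\ref{prop-2} then forces
$$
\rho((x_n,y_n),(x_m,y_m))>\tfrac{1}{M}\qquad (n\neq m),
$$
which contradicts the previous displayed convergence. Hence $(d(x_n,y_n))_{n=1}^\infty\to 0$, as desired. The only nontrivial step is the comparison inequality from Lemma~\ref{lemaCascales}, which is precisely what lets one turn closeness of the endpoints (granted by compactness) into closeness of the molecules, and this is where the assumption $d(x_n,y_n)\geq\varepsilon$ is essential.
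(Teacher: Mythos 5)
Your argument is correct and follows essentially the same route as the paper: argue by contradiction, use compactness to extract Cauchy subsequences of $(x_n)$ and $(y_n)$, and combine the separation from Proposition~\ref{prop-2} with the comparison estimate of Lemma~\ref{lemaCascales} to derive a contradiction. The only cosmetic difference is that you work with the molecular distance $\rho$ directly via the first inequality of Lemma~\ref{lemaCascales} (and note, correctly but unnecessarily, that the subsequence is itself interpolating with constant at most $M$), whereas the paper applies the equivalent reformulation $d(x_n,y_n)\leq 2M\left(d(x_n,x_m)+d(y_n,y_m)\right)$ to the original sequence.
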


\begin{proof}
Let $M$ be the Lipschitz interpolation constant of $\left((x_n,y_n)\right)_{n=1}^\infty$. Given $n\neq m$, Proposition \ref{prop-2} and Lemma \ref{lemaCascales} yield
$$
d(x_n,y_n)\leq 2M\left(d(x_n,x_m)+d(y_n,y_m)\right).
$$
In particular, 
$$
d(x_n,y_n)\leq 2M\left(d(x_n,x_{n+h})+d(y_n,y_{n+h})\right),
$$
for all $n,h\in\mathbb{N}$. Now, if $\left(d(x_n,y_n)\right)_{n=1}^\infty\to 0$ did not converge to $0$ as $n\to\infty$, we could find some $\varepsilon_0$ and two subsequences $(x_{n_k})_{k=1}^\infty$, $(y_{n_k})_{k=1}^\infty$ such that $d(x_{n_k},y_{n_k})\geq \varepsilon_0$ for all $k\in\N$. By the above we would get that 
$$
\varepsilon_0\leq 2M \left(d({x_{n_k},x_{n_k+h}})+ d({y_{n_k},y_{n_k+h}})\right)
$$
would hold for every $k, h\in\mathbb N$. Now, up to taking a further subsequence, we can assume that both $(x_{n_k})_{k=1}^\infty$ and $(y_{n_k})_{k=1}^\infty$ are convergent and, consequently, Cauchy sequences. Taking, by the Cauchy condition, $k\in\mathbb N$ large enough such that $d(x_{n_k},x_{n_{k+h}})<\varepsilon_0/4M$ and $d(y_{n_k},y_{n_{k+h}})<\varepsilon_0/4M$ holds for every $h\geq k$, we get
$$
\varepsilon_0\leq 2M \left(d({x_{n_k},x_{n_{k+h}}})+ d({y_{n_k},y_{n_{k+h}}})\right)<\varepsilon_0,
$$
a contradiction. Consequently, $\left(d(x_n,y_n)\right)_{n=1}^\infty\to 0$ as $n\to\infty$.
\end{proof}

At this point a natural question can be raised.

\begin{question}\label{question now}
Given a compact pointed metric space $X$ and a L-molecularly separated sequence of molecules $(m_{x_n,y_n})_{n=1}^\infty$ with $(x_n,y_n)\in\widetilde{X}$ for all $n\in\N$, is it true that $\left((x_n,y_n)\right)_{n=1}^\infty$ is Lipschitz interpolating for $\Lipo(X)$? 
\end{question}

An affirmative answer could be interpreted as a version for Lipschitz interpolating sequences of some results as, among others, \cite[Corollary 6]{Ate-92} established for Bloch functions, \cite[Proposition 3.3]{GalLinMir-09} for uniform algebras, or \cite[Corollary 5.3]{BlaGalLinMir-19} for weighted analytic functions.

In general, the above question has a negative answer, but a converse can be stated if we admit taking a subsequence. First we present the following example.

\begin{example}\label{exam:compactnointer}
For each $n\in\N$, let $X_n:=\{0,x_n,y_n\}$ be equipped with distances $d(0,x_n)=d(0,y_n)=d(x_n,y_n)=1/n$. Define $X:=\bigcup\limits_{n\in\mathbb N} X_n$ and declare the distance such that
$$
d(x,y)=d(x,0)+d(y,0)\qquad (x\in X_n,\; y\in X_k,\; n\neq k).
$$
This metric space is realizable, for instance, in $\left(\oplus_{n=1}^\infty \ell_\infty^2 \right)_1$. Consider the set of points in $\widetilde{X}$:
$$
Y=\left\{(x_n,0),(y_n,x_n),(0,y_n)\colon n\in\N\right\}.
$$
This sequence of points is not Lipschitz interpolating for $\Lipo(X)$ because it contains cycles (it is impossible that a function $f\in\Lipo(X)$ satisfies $m_{x_n,0}(f)=1$ and $m_{x_n,y_n}(f)=m_{0,y_n}(f)=0$ holds for every $n\in\N$).

We claim, however, that $\Vert m_{x,y}-m_{u,v}\Vert=2$ whenever $(x,y),(u,v)\in Y$. In order to do so, for every $n\in\mathbb N$, define the functions $f_n,g_n\colon X\to\mathbb R$ by $f(x_n)=1/n$ and $0$ otherwise, and, similarly, $g_n(y_n)=1/n$ and $g_n=0$ otherwise. It is not difficult to prove that $\Vert f_n\Vert=\Vert g_n\Vert=1$. Moreover, $(m_{x_n,0}-m_{y_n,x_n})(f_n)=2$ and $(m_{y_n,x_n}-m_{0,y_n})(g_n)=2$ for every $n\in\mathbb N$, and so $\Vert m_{x_n,0}-m_{y_n,x_n}\Vert=\Vert m_{y_n,x_n}-m_{0,y_n}\Vert=2$. 

Moreover, if $n\neq m$, observe that the defined distance implies that 
$$
\Vert f_n\pm f_m\Vert=\Vert f_n\pm g_m\Vert=\Vert g_n\pm g_m\Vert=1.
$$
Thus,
$$\Vert m_{x_n,0}-m_{x_m,0}\Vert\geq (m_{x_n,0}-m_{x_m,0})(f_n-f_m)=2.$$
Similarly,
$$\Vert m_{x_n,0}-m_{y_m,x_m}\Vert\geq (m_{x_n,0}-m_{y_m,x_m})(f_n-g_m)=2$$
and 
$$\Vert m_{x_n,0}-m_{0,y_m}\Vert\geq (m_{x_n,0}-m_{0,y_m})(f_n+g_m)=2.$$
Moreover,
$$\Vert m_{y_n,x_n}-m_{y_m,x_m}\Vert\geq (m_{y_n,x_n}-m_{y_m,x_m})(g_n+f_m)=2$$
and
$$\Vert m_{y_n,x_n}-m_{0,y_m}\Vert\geq (m_{y_n,x_n}-m_{0,y_m})(g_n+g_m)=2,$$
and, finally,
$$\Vert m_{y_n,0}-m_{y_m,0}\Vert\geq (m_{y_n,0}-m_{y_m,0})(-g_n+g_m)=2.$$
All the considered cases finish the proof.
\end{example}

With the aid of Lemma 2.6 in \cite{jr22}, we may establish the announced result. Observe that the compactness of $X$ can be removed.

\begin{theorem}\label{theo:suceLorto}
Let $X$ be a pointed metric space and let $((x_n,y_n))_{n=1}^\infty$ be a sequence in $\widetilde{X}$ such that $(d(x_n,y_n)))_{n=1}^\infty\rightarrow 0$ as $n\to\infty$. Then, for every $\varepsilon>0$,  there exists a subsequence $((x_{n_k},y_{n_k}))_{k=1}^\infty$ which is Lipschitz interpolating for $\Lipo(X)$ and its Lipschitz interpolating constant is smaller or equal than $1/(1-\varepsilon)$.
\end{theorem}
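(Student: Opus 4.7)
The plan is to combine the characterisation in Theorem~\ref{theo:caraLipschitintergeneral} with a subsequence-selection principle for molecules of shrinking diameter, namely \cite[Lemma 2.6]{jr22}.

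First, I would reformulate the conclusion via Theorem~\ref{theo:caraLipschitintergeneral}: to show that $((x_{n_k},y_{n_k}))_{k=1}^\infty$ is Lipschitz interpolating for $\Lipo(X)$ with Lipschitz interpolating constant at most $1/(1-\varepsilon)$, it is equivalent to extract a subsequence so that the operator $S_0\colon\ell_1\to\F(X)$ given by $S_0(e_k)=m_{x_{n_k},y_{n_k}}$ satisfies $\|S_0(\lambda)\|\geq (1-\varepsilon)\|\lambda\|_1$ for every $\lambda\in\ell_1$. Hence the problem reduces to producing a subsequence of the molecule sequence $(m_{x_n,y_n})_{n=1}^\infty$ which is $(1-\varepsilon)$-equivalent to the canonical $\ell_1$-basis, i.e.,
$$
\Bigl\|\sum_{k=1}^{N}\lambda_k\,m_{x_{n_k},y_{n_k}}\Bigr\|_{\F(X)} \geq (1-\varepsilon)\sum_{k=1}^{N}|\lambda_k|
$$
for every finitely supported family $(\lambda_k)$, the general inequality then following by a density and continuity argument.

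Next, I would invoke \cite[Lemma 2.6]{jr22}, which is precisely a selection principle of the above shape: from any sequence in $\widetilde{X}$ satisfying $d(x_n,y_n)\to 0$, one can extract a subsequence whose molecules are $(1-\varepsilon)$-equivalent to the $\ell_1$-basis. The underlying strategy is an inductive scheme in which $n_k$ is chosen so that $d_{n_k}:=d(x_{n_k},y_{n_k})$ decays sufficiently fast, and, attached to it, one defines (via McShane's extension) a spike Lipschitz function of the form
$$
f_k(z)=\bigl(d_{n_k}-d(z,x_{n_k})\bigr)^{+}-\bigl(d_{n_k}-d(0,x_{n_k})\bigr)^{+},
$$
which evaluates to $1$ on $m_{x_{n_k},y_{n_k}}$ and whose interaction with the previously selected molecules is $\varepsilon$-negligible, because the support radius $d_{n_k}$ is much smaller than all earlier $d_{n_j}$. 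Suitable signed sums of the $f_k$'s then witness the desired $\ell_1$-lower bound.

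Once this subsequence is produced, Theorem~\ref{theo:caraLipschitintergeneral} yields the conclusion immediately. The only obstacle I expect is the one flagged by the authors in the remark preceding the theorem: \cite[Lemma 2.6]{jr22} is stated in the compact setting, so I would need to re-examine its proof and verify that compactness is unnecessary. This should be plausible since compactness would only serve to control the clustering of the base points $\{x_n,y_n\}$, whereas here the stronger hypothesis $d(x_n,y_n)\to 0$, together with the inductive geometric-decay choice of $n_k$, already provides whatever control is needed for the spike construction and for the estimates on cross-interactions between distinct molecules.
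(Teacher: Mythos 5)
Your reduction is the same as the paper's: pass to a subsequence whose molecules satisfy $\bigl\|\sum_k\lambda_k m_{x_{n_k},y_{n_k}}\bigr\|\geq(1-\varepsilon)\sum_k|\lambda_k|$ and then invoke Proposition~\ref{prop:caralipsconstant} (equivalently Theorem~\ref{theo:caraLipschitintergeneral}). The gap is in how you produce that subsequence. First, you misstate the content of \cite[Lemma 2.6]{jr22}: it is not a selection principle yielding a subsequence $(1-\varepsilon)$-equivalent to the $\ell_1$-basis. It only asserts that $\|\gamma+m_{x_k,y_k}\|\to 1+\|\gamma\|$ as $k\to\infty$ for each \emph{fixed} $\gamma\in\F(X)$, i.e.\ that the molecules form an asymptotically $L$-orthogonal sequence. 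To upgrade this pointwise norm-additivity to the inductive step --- choosing $n_{p+1}$ so that $\|\gamma+\lambda m_{x_{n_{p+1}},y_{n_{p+1}}}\|\geq(1-\varepsilon_{p+1})(\|\gamma\|+|\lambda|)$ uniformly over the whole finite-dimensional span of the previously chosen molecules --- the paper needs a second ingredient, \cite[Lemma 3.1]{amr23}, and then a product $\prod_k(1-\varepsilon_k)>1-\varepsilon$ to accumulate the losses. Your proposal skips this uniformization entirely.

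Second, your fallback explicit construction does not work. The signed sum $g=\sum_k\sigma_k f_k$ of the tent functions $f_k(z)=\bigl(d_{n_k}-d(z,x_{n_k})\bigr)^{+}-\mathrm{const}$ is $1$-Lipschitz only when the supporting balls are suitably separated, e.g.\ $d(x_{n_j},x_{n_k})\geq d_{n_j}+d_{n_k}$. Nothing in the hypothesis $d(x_n,y_n)\to 0$ prevents the base points from clustering: take $x_n=x$ for all $n$ and $y_n\to x$. Then every spike is centred at the same point, and near $x$ the slopes add up, so $\Lip\bigl(\sum_k f_k\bigr)=\infty$; no choice of fast-decaying $d_{n_k}$ repairs this, and no subsequence restores disjointness of supports. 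This clustered case is exactly where the $L$-orthogonality machinery is doing real work (the almost-norming functional for $\gamma+\lambda m$ is not obtained by superposing disjointly supported spikes), so the claim that the cross-interactions are ``$\varepsilon$-negligible because the support radius is much smaller'' is not a proof. To close the argument you should follow the paper's route: apply \cite[Lemma 2.6]{jr22} to get $L$-orthogonality, then \cite[Lemma 3.1]{amr23} for the inductive gliding-hump selection.
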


\begin{proof}
Choose a sequence of positive numbers $(\varepsilon_n)_{n=1}^\infty$ small enough to get $\prod\limits_{k=1}^\infty (1-\varepsilon_k)>1-\varepsilon$. Let us construct by induction a subsequence $(m_{x_{n_k},y_{n_k}})_{k=1}^\infty$ such that, given $p\in\mathbb N$, it holds that 
$$
\left\Vert \sum_{k=1}^p\lambda_k m_{x_{n_k},y_{n_k}} \right\Vert\geq \left( \prod\limits_{k=1}^p 1-\varepsilon_k\right)\sum_{k=1}^p\vert\lambda_k\vert
$$
for every $\lambda_1,\ldots, \lambda_p\in\mathbb R$. To do so, select $n_1=1$ for which clearly the above condition holds, assume that $n_1,\ldots, n_p$ have already been constructed, and let us construct $n_{p+1}$. 

Clearly, it follows that $(d(x_k,y_k))_{k=n_p+1}^\infty\rightarrow 0$ as $k\to\infty$. By \cite[Lemma 2.6]{jr22} we get that $\Vert\gamma+m_{x_k,y_k}\Vert_{k=n_p+1}^\infty\to 1+\Vert \gamma\Vert$ holds as $k\to\infty$ for every $\gamma\in\F(X)$, so $(m_{x_k,y_k})_{k=n_p+1}^\infty$ is an $L$-orthogonal sequence in the language of \cite{amr23}. By \cite[Lemma 3.1]{amr23} there exists $n_{p+1}$ large enough to satisfy that
$$
\Vert \gamma+\lambda m_{x_{n_{p+1}},y_{n_{p+1}}}\Vert\geq (1-\varepsilon_{p+1})(\Vert \gamma\Vert+\vert\lambda\vert)
$$
holds for all $\lambda\in\mathbb R$ and $\gamma\in\mathrm{span}\left\{m_{x_{n_i},y_{n_i}}\colon 1\leq i\leq p\right\}$. Given $\lambda_1,\ldots,\lambda_p,\lambda_{p+1}\in\mathbb R$, we get
\begin{align*}
\left\Vert \sum_{i=1}^{p+1}\lambda_i m_{x_{n_i},y_{n_i}} \right\Vert
& =\left\Vert \sum_{i=1}^{p}\lambda_i m_{x_{n_i},y_{n_i}} +\lambda_{p+1}m_{x_{n_{p+1}},y_{n_{p+1}}}\right\Vert\geq (1-\varepsilon_{p+1})\left(\left\Vert \sum_{i=1}^{p}\lambda_i m_{x_{n_i},y_{n_i}}\right\Vert +\vert\lambda_{p+1}\vert\right)\\
& \geq (1-\varepsilon_{p+1})\left(\left(\prod\limits_{i=1}^p 1-\varepsilon_i\right) \sum_{i=1}^p\vert\lambda_i\vert +\vert\lambda_{p+1}\vert\right)\geq \left(\prod\limits_{i=1}^{p+1}1-\varepsilon_i\right)\sum_{i=1}^{p+1}\vert\lambda_i\vert.
\end{align*}
Finally, given $\lambda=(\lambda_n)_{n=1}^\infty\in\ell_1$, it is clear by the inductive construction that
$$
\left\Vert\sum_{k=1}^\infty \lambda_k m_{x_{n_k},y_{n_k}} \right\Vert\geq \left( \prod\limits_{k=1}^\infty 1-\varepsilon_k\right)\sum_{k=1}^\infty \vert \lambda_k\vert\geq (1-\varepsilon)\Vert \lambda\Vert_{\ell_1},
$$
and this proves that the mapping $S:\ell_1\to \mathcal F(X)$, defined by $S(e_i):=m_{x_{n_i},y_{n_i}}$, is an into isomorphism satisfying that $(1-\varepsilon)\Vert\lambda\Vert_1\leq\Vert S(\lambda)\Vert$. Then the result follows by applying Proposition~\ref{prop:caralipsconstant}.
\end{proof}

We now proceed to study Lipschitz interpolation of sequences for little Lipschitz functions. If $\left((x_n,y_n)\right)_{n=1}^\infty$ is a sequence in $\widetilde{X}$ such that $\left(d(x_n,y_n)\right)_{n=1}^\infty\to 0$ as $n\to\infty$, then the restriction to $\lipo(X)$ of the Lipschitz interpolating operator $T\colon\Lipo(X)\to\ell_\infty$ maps $\lipo(X)$ to $c_0$ because 
$$
\lim_{n\to\infty}\frac{f(x_n)-f(y_n)}{d(x_n,y_n)}=0.
$$
This justifies the following notion introduced in a more general environment.

\begin{definition}\label{def-LIC-00}
Let $X$ be a pointed metric space. A set $\left((x_i,y_i)\right)_{i\in I}$ in $\widetilde{X}$ is Lipschitz $c_0(I)$-interpolating for $\lipo(X)$ if for any set $\left(\alpha_i\right)_{i\in I}\in c_0(I)$, there exists $f\in\lipo(X)$ such that 
$$
\frac{f(x_i)-f(y_i)}{d(x_i,y_i)}=\alpha_i
$$ 
for all $i\in I$.  
\end{definition}

Now it is time to go back to our original compact setting of this section. To do so, let $X$ be a compact pointed metric space and assume that $\lipo(X)$ separates the points of $X$ uniformly. In this case, $\mathcal F(X)^*\cong\lipo(X)$.

The following theorem is a version for Lipschitz spaces of a result on interpolating sequences for Bloch type spaces (see \cite[Theorem 2.4]{MirMal-22}).

\begin{theorem}\label{teo-1}
Let $X$ be a compact pointed metric space such that $\lipo(X)$ separates points uniformly, let $\left((x_n,y_n)\right)_{n=1}^\infty$ be a sequence of distinct points in $\widetilde{X}$ and let $M\geq 1$. The following statements are equivalent:
\begin{enumerate}
	\item $\left((x_n,y_n)\right)_{n=1}^\infty$ is Lipschitz interpolating for $\Lipo(X)$ and its Lipschitz interpolating constant is $M$.
    \item The operator $S:\ell_1\to \F(X)$ defined by $S(e_n):=m_{x_n,y_n}$ is an into isomorphism and 
    $$\frac{1}{M}=\max\{J>0\colon J\Vert\lambda\Vert\leq \Vert S(\lambda)\Vert,\; \forall\lambda\in\ell_1\}.$$
	\item $\left((x_n,y_n)\right)_{n=1}^\infty$ is Lipschitz $c_0$-interpolating for $\lipo(X)$ and its Lipschitz interpolating constant is $M$.
 	\item $M$ is the infimal constant satisfying the condition: for every $\alpha\in c_0$, there exists a function $f\in\Lipo(X)$ such that $T(f)=\alpha$ and $\Vert f\Vert\leq M$.
\end{enumerate}
\end{theorem}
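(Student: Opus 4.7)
The equivalence (i) $\Leftrightarrow$ (ii) is already provided by Theorem \ref{theo:caraLipschitintergeneral}, so the plan is to add (iii) and (iv) into the chain of equivalences by exploiting the predual identification $R_X\colon\F(X)\to\lipo(X)^*$, which is an isometric isomorphism under the standing hypotheses.

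First I would pass from (i) to the little-Lipschitz setting. Proposition \ref{prop:dist0interseqcompact} forces $d(x_n,y_n)\to 0$, and the little-Lipschitz condition then makes $T_0:=T|_{\lipo(X)}$ a bounded operator $\lipo(X)\to c_0$. The key computation is to verify, on basis vectors,
\[
T_0^*(e_n)(f)=\frac{f(x_n)-f(y_n)}{d(x_n,y_n)}=m_{x_n,y_n}(f)\qquad (f\in\lipo(X)),
\]
which shows that, under $R_X$, the operator $T_0^*$ is exactly the operator $S$ of Proposition \ref{prop-new}. I would then apply Proposition \ref{prop:caralipsconstantdual} to $T_0$: the onto-ness of $T_0$ with optimal lifting constant $M$ (which is (iii)) is equivalent to $T_0^*=S$ being bounded below with optimal constant $1/M$ (which is (ii)), and the constants match. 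For the reverse implication (iii) $\Rightarrow$ (ii) I would first mimic Proposition \ref{prop-2} with $\lipo$-witnesses provided by (iii) to conclude $L$-molecular separation, and then the compactness argument of Proposition \ref{prop:dist0interseqcompact} still delivers $d(x_n,y_n)\to 0$, placing us in the situation where $T_0$ is defined and the duality loop applies.

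For (i) $\Leftrightarrow$ (iv), the implications (i) $\Rightarrow$ (iv) and (iii) $\Rightarrow$ (iv) are trivial since $c_0\subseteq\ell_\infty$ and $\lipo(X)\subseteq\Lipo(X)$ isometrically. For the nontrivial direction (iv) $\Rightarrow$ (i) I would use weak*-compactness: given $\alpha\in B_{\ell_\infty}$, Goldstine's theorem produces a net $(\alpha_s)\subseteq B_{c_0}$ with $\alpha_s\to\alpha$ in the weak* topology; (iv) lifts each $\alpha_s$ to $f_s\in MB_{\Lipo(X)}$; Banach--Alaoglu extracts a weak*-cluster point $f\in MB_{\Lipo(X)}$; and the weak*-to-weak* continuity of $T=S^*$ forces $T(f)=\alpha$. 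Matching of the constants then follows by comparing the two infima.

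The main obstacle will be the bookkeeping of identifications: one has to keep track of $\F(X)\cong\lipo(X)^*$, $\Lipo(X)\cong\F(X)^*$, and the fact that $T=S^*$ is weak*-to-weak* continuous when $S$ is viewed as a preadjoint of $T_0$, so that the optimal constants delivered by Propositions \ref{prop:caralipsconstant} and \ref{prop:caralipsconstantdual} line up with the single constant $M$ appearing in all four statements. Once these identifications are pinned down, the equivalences follow mechanically from the duality results already in Section \ref{section:prel}.
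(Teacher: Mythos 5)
Your proposal is correct and follows the same overall architecture as the paper's proof: (i)$\Leftrightarrow$(ii) is quoted from Theorem~\ref{theo:caraLipschitintergeneral}, the passage between (ii) and (iii) is a duality argument hinging on Proposition~\ref{prop:caralipsconstantdual}, and (iv)$\Rightarrow$(i) is the Goldstine/Banach--Alaoglu argument combined with the weak*-to-weak* continuity of $T=S^*$. The one place where you genuinely diverge is the key technical step of producing a preadjoint of $S$. The paper proves directly, by an $\varepsilon$-argument on weak*-convergent sequences (legitimised by the separability of $\ell_1$), that $S$ is weak*-to-weak* continuous, and only then concludes abstractly that $S=(R_1)^*$ for some $R_1\in\L(\lipo(X),c_0)$. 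You instead exhibit the preadjoint explicitly as $T_0=T|_{\lipo(X)}$ --- well defined into $c_0$ once Proposition~\ref{prop:dist0interseqcompact} gives $d(x_n,y_n)\to 0$ --- and verify $T_0^*=R_X\circ S$ by evaluating on the basis vectors $e_n$ and invoking density of their span in $\ell_1$; since $R_X$ is an isometric isomorphism, this transfers all the constants intact. Your identification is markedly shorter and makes the duality transparent, while reaching exactly the same operator (the paper's abstract $R_1$ is necessarily $T|_{\lipo(X)}$ anyway). Two small points to tighten: in (iv)$\Rightarrow$(i) the infimum defining the constant need not be attained (cf. Remark~\ref{non-proximinalremark}), so the lifts of the approximating net should be taken in $(M+\varepsilon)B_{\Lipo(X)}$ and the bound $\Vert f\Vert\leq M\Vert\alpha\Vert$ recovered by weak* lower semicontinuity of the norm, as the paper does with the factor $M+\frac{1}{k}$; and your separate argument for (iii)$\Rightarrow$(ii) via $L$-molecular separation, while correct, is dispensable because the cycle already closes through (iii)$\Rightarrow$(iv)$\Rightarrow$(i)$\Rightarrow$(ii).
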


\begin{proof} 
$(i)\Leftrightarrow(ii)$ has been shown in Theorem~\ref{theo:caraLipschitintergeneral}.

$(ii)\Rightarrow(iii)$: Assume that $(ii)$ holds. We claim that $S$ is weak*-to-weak* continuous, then $S=(R_1)^*$ for some $R_1\in\L(\lipo(X),c_0)$, and $(iii)$ will be a direct consequence of Proposition~\ref{prop:caralipsconstantdual}.

In order to prove our claim, let $\lambda=(\lambda_n)_{n=1}^\infty\in\ell_1$ and since $\ell_1$ is separable, take a sequence $(\lambda_k)_{k=1}^\infty$, with $\lambda_k=(\lambda_{k,n})_{n=1}^\infty\in\ell_1$ for every $k\in\N$, that converges to $\lambda$ as $k\to\infty$ in the weak* topology of $\ell_1$. Let us prove that $S((\lambda_k)_{k=1}^\infty)\to S(\lambda)$ as $k\to\infty$ in the weak* topology of $\F(X)$. It suffices to show that for every $f\in\lipo(X)\setminus\{0\}$, it holds that
$$
S((\lambda_k)_{k=1}^\infty)(f)=\left(\sum_{n=1}^\infty \lambda_{k,n} m_{x_n,y_n}(f)\right)_{k=1}^\infty\mathop{\to}\limits^{k\rightarrow \infty} \sum_{n=1}^\infty \lambda_n m_{x_n,y_n}(f)=S(\lambda)(f).
$$
To this end, select $\varepsilon>0$ and take $L>0$ such that $\Vert\lambda\Vert_1+\sup_{k\in\mathbb N}\Vert \lambda_k\Vert_1\leq L$ (since weak* convergent sequences are bounded). On a hand, since $((x_n,y_n))_{n=1}^\infty$ is Lipschitz interpolating for $\Lipo(X)$ and $X$ is compact, we get $(d(x_n,y_n))_{n=1}^\infty\to 0$ as $n\to\infty$ in virtue of Proposition~\ref{prop:dist0interseqcompact}. Since $f\in\lipo(X)$, we get that $(m_{x_n,y_n}(f))_{n=1}^\infty\to 0$ as $n\to\infty$. Thus one may select $n_0\in\mathbb N$ such that $\vert m_{x_n,y_n}(f)\vert<\varepsilon/2L$ for every $n\geq n_0$. 

On the other hand, since $(\lambda_k)_{k=1}^\infty\to\lambda$ as $k\to\infty$ in the weak* topology of $\ell_1$, we infer that, for each $n\in\mathbb N$, the real sequence $(\lambda_{k,n})_{k=1}^\infty\to\lambda_n$ as $k\to\infty$. So we can find $k_0\in\N$ such that if $k\geq k_0$, then 
$$
\vert \lambda_{k,n}-\lambda_n\vert<\frac{\varepsilon}{2n_0\Vert f\Vert} \quad (1\leq n\leq n_0).
$$
Note that $\vert (S((\lambda_k)_{k=1}^\infty)-S(\lambda))(f)\vert<\varepsilon$ whenever $k\geq k_0$, since 
\begin{align*}
\vert (S((\lambda_k)_{k=1}^\infty)-S(\lambda))(f)\vert
&=\left\vert \sum_{n=1}^\infty (\lambda_{k,n}-\lambda_n)m_{x_n,y_n}(f) \right\vert\\
& \leq \sum_{n=1}^{n_0}\vert \lambda_{k,n}-\lambda_n\vert \vert m_{x_n,y_n}(f)\vert +\sum_{n=n_0+1}^\infty \vert \lambda_{k,n}-\lambda_n\vert\vert m_{x_n,y_n}(f)\vert\\
& < \sum_{n=1}^{n_0}\frac{\varepsilon}{2n_0}\frac{\vert m_{x_n,y_n}(f)\vert}{\Vert f\Vert}+\sum_{n=n_0+1}^\infty (\vert \lambda_{k,n}\vert +\vert \lambda_n\vert)\frac{\varepsilon}{2L}\\
& <\frac{\varepsilon}{2}+\frac{\varepsilon}{2}\frac{\sum_{n=1}^\infty \vert \lambda_{k,n}\vert +\sum_{n=1}^\infty \vert \lambda_n\vert}{L}<\varepsilon.
\end{align*}
This proves that $S$ is weak*-to-weak* continuous, as desired.

$(iii)\Rightarrow(iv)$ is obvious. 

$(iv)\Rightarrow (i)$: Let $\alpha\in\ell_\infty$. By Goldstein'n Theorem and by the separability of $\ell_1$, there is a sequence $\left(\beta_k\right)_{k=1}^\infty$ in $\Vert \alpha\Vert B_{c_0}$ such that $\left(\beta_k\right)_{k=1}^\infty\to\alpha$ as $k\to\infty$ in the weak* topology. By $(iv)$, for any $k\in\N$, there exists $f_k\in\Lipo(X)$ with $\left\|f_k\right\|\leq \left(M+\frac{1}{k}\right)\left\|\beta_k\right\|_\infty$ such that $T(f_k)=\beta_k$. Since $\F(X)$ is separable, Banach--Alaoglu Theorem provides a subsequence $\left(f_{k_m}\right)_{m=1}^\infty$ of $\left(f_k\right)_{k=1}^\infty$ which w*-converges to some $f\in\Lip_0(X)$. Since $(T|_{\lip_0(X)})^{**}=T$ (proved in $(ii)\Rightarrow(iii)$), we get that $T$ is weak*-to-weak* continuous, so 
$$
\alpha=w^*-\lim_{m\to\infty}\beta_{k_m}=w^*-\lim_{m\to\infty}T(f_{k_m})=T(f).
$$ 
This proves that $((x_n,y_n))_{n=1}^\infty$ is a Lipschitz interpolating sequence for $\Lipo(X)$. In order to compute its Lipschitz interpolating constant, observe that the $w^*$-lower semicontinuity of the norm of $\Lipo(X)$ implies
$$
\Vert f\Vert\leq \liminf_{k}\Vert f_k\Vert\leq M \Vert \alpha\Vert.
$$
This also proves that $M$ is the Lipschitz interpolating constant, as desired.
\end{proof}

\section{Lipschitz interpolating sets for vector-valued Lipschitz functions}\label{sect:vectorval}

In this section we will have a look to the case of vector-valued Lipschitz functions. In the study of interpolating sequences, the extension of the scalar-valued case to the vector-valued one also has been addressed in the setting of bounded holomorphic functions (see \cite[Theorem 5.2]{Muj-91} and  \cite[p. 286]{Gar-81}). 

\begin{definition}\label{def:Lipschitvect}
Let $X$ be a pointed metric space and let $E$ be a Banach space. A set $\left((x_i,y_i)\right)_{i\in I}$ of points in $\widetilde{X}$ is called a Lipschitz interpolating set for $\Lip_0(X,E)$ if for each $\left(v_i\right)_{i\in I}\in \ell_\infty(I,E)$, there exists a function $f\in\Lipo(X,E)$ such that 
$$
\frac{f(x_i)-f(y_i)}{d(x_i,y_i)}=v_i
$$
for all $i\in I$. This means that the bounded linear operator $T_E\colon\Lipo(X,E)\to\ell_\infty(I,E)$, defined by 
$$
T_E(f)=\left(\frac{f(x_i)-f(y_i)}{d(x_i,y_i)}\right)_{i\in I}\qquad (f\in\Lipo(X,E)),
$$
is onto. In such a case define the Lipschitz interpolation constant for $(\left((x_i,y_i)\right)_{i\in I},E)$ by
$$
M_E:=\inf\left\{K>0 \,|\, \forall v\in B_{\ell_\infty(I,E)},\; \exists f\in KB_{\Lipo(X,E)}\colon T_E(f)=v\right\}<\infty.
$$
\end{definition}

In the following we will prove that the existence of a Beurling set of functions in $\Lipo(X)$ allows us to connect the Lipschitz interpolating sets for any Banach space $E$.

\begin{proposition}\label{teo-2}
Let $\left((x_i,y_i)\right)_{i\in I}$ be a Lipschitz interpolating set for $\Lipo(X)$ in $\widetilde{X}$ whose Lipschitz interpolating constant is $M$. The following are equivalent:
\begin{enumerate}
\item $\left((x_i,y_i)\right)_{i\in I}$ has a Beurling set $(f_i)_{i\in I}$ of functions in $\Lipo(X)$.
\item For each Banach space $E$, there exists an operator $R_E\in\L(\ell_\infty(I,E),\Lipo(X,E))$ such that $\Vert R_E\Vert\leq M$ and $T_E\circ R_E=\mathrm{Id}_{\ell_\infty(I,E)}$. In such a case, $\left((x_i,y_i)\right)_{i\in I}$ is a Lipschitz interpolating set for $\Lipo(X,E)$ and $M_E=M$. 
\end{enumerate}
\end{proposition}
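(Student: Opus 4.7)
The equivalence is proved by producing $R_E$ explicitly from the Beurling set in one direction and by specializing to $E = \mathbb{R}$ in the other.

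For $(i) \Rightarrow (ii)$, given a Beurling set $(f_i)_{i \in I}$ of functions in $\Lipo(X)$ for $\left((x_i, y_i)\right)_{i \in I}$, I would mimic Theorem~\ref{rem:levanvarapouloset} in the vector-valued setting. Define $R_E \colon \ell_\infty(I,E) \to \Lipo(X,E)$ by the pointwise formula
$$
R_E(v)(x) = \sum_{i \in I} f_i(x)\, v_i \qquad \bigl(v = (v_i)_{i \in I} \in \ell_\infty(I,E),\; x \in X\bigr).
$$
The key observation is that, for every finite $F \subseteq I$ and $x \in X$,
$$
\sum_{i \in F} \|f_i(x)\, v_i\|_E = \sum_{i \in F} |f_i(x) - f_i(0)|\, \|v_i\|_E \leq \|v\|_\infty \, d(x,0) \sum_{i \in F} \frac{|f_i(x) - f_i(0)|}{d(x,0)} \leq M\, \|v\|_\infty\, d(x,0),
$$
by the Beurling summability condition, so the sum converges absolutely in $E$ by completeness. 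An analogous estimate with $(x,y) \in \widetilde X$ yields
$$
\frac{\|R_E(v)(x) - R_E(v)(y)\|_E}{d(x,y)} \leq \sum_{i \in I} \frac{|f_i(x) - f_i(y)|}{d(x,y)}\, \|v_i\|_E \leq M\, \|v\|_\infty,
$$
so $R_E(v) \in \Lipo(X,E)$ with $\|R_E(v)\| \leq M\, \|v\|_\infty$. Linearity of $R_E$ is clear, hence $R_E \in \L(\ell_\infty(I,E), \Lipo(X,E))$ with $\|R_E\| \leq M$. The identity $T_E \circ R_E = \mathrm{Id}_{\ell_\infty(I,E)}$ follows componentwise from $m_{x_j,y_j}(f_i) = \delta_{ij}$, exactly as in the scalar case.

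For $(ii) \Rightarrow (i)$, I would simply specialize to $E = \mathbb{R}$: the resulting operator $R_{\mathbb R} \in \L(\ell_\infty(I), \Lipo(X))$ with $\|R_{\mathbb R}\| \leq M$ and $T \circ R_{\mathbb R} = \mathrm{Id}_{\ell_\infty(I)}$ is precisely the hypothesis of Theorem~\ref{cor:caravarapoulos0}, which produces a Beurling set.

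Finally, for the remaining assertion ``in such a case $\left((x_i,y_i)\right)_{i\in I}$ is Lipschitz interpolating for $\Lipo(X,E)$ and $M_E = M$'', surjectivity of $T_E$ is immediate from $T_E \circ R_E = \mathrm{Id}_{\ell_\infty(I,E)}$, and the inequality $M_E \leq M$ follows since $R_E$ provides, for every $v \in B_{\ell_\infty(I,E)}$, a preimage $R_E(v) \in M B_{\Lipo(X,E)}$. For the reverse inequality, given $\alpha \in B_{\ell_\infty(I)}$, fix a norm-one vector $e \in E$ (assuming $E \neq \{0\}$; the case $E = \{0\}$ being trivial) and form $v = (\alpha_i e)_{i \in I} \in B_{\ell_\infty(I,E)}$. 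By assumption one may find $g \in \Lipo(X,E)$ with $T_E(g) = v$ and $\|g\| \leq (M_E + \varepsilon)$. Choose by Hahn--Banach $e^* \in S_{E^*}$ with $e^*(e) = 1$, and set $f = e^* \circ g \in \Lipo(X)$. Then $\|f\| \leq \|g\| \leq M_E + \varepsilon$ and $T(f) = \alpha$, so $M \leq M_E + \varepsilon$ by Definition~\ref{def-LIC}. Letting $\varepsilon \to 0$ gives $M \leq M_E$.

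The main point requiring care is the absolute convergence of the defining series for $R_E(v)$ at each point of $X$ together with the uniform Lipschitz estimate; both rely crucially on the summability sup in Definition~\ref{def:Berurlingeq}, which is exactly what ``Beurling set'' encapsulates. No further tensor product machinery is needed for this proposition; tensor products are reserved for the stronger Theorem~\ref{theo:mainvectorval}.
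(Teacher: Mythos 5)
Your proposal is correct and follows essentially the same route as the paper: the same pointwise formula $R_E(v)(x)=\sum_{i\in I}f_i(x)v_i$ for $(i)\Rightarrow(ii)$, and specialization to $E=\mathbb{R}$ followed by Theorem~\ref{cor:caravarapoulos0} for the converse (the paper's citation of Theorem~\ref{rem:levanvarapouloset} at that point appears to be a slip; yours is the right reference). You additionally spell out the absolute-convergence and Lipschitz estimates that the paper dismisses as ``an easy calculation'', and you supply the functional-composition argument for $M\leq M_E$, which the paper leaves implicit.
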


\begin{proof}
$(i)\Rightarrow(ii)$: Let $E$ be a Banach space. If $(i)$ holds, let $(f_i)_{i\in I}$ be a Beurling set of functions in $\Lipo(X)$ associated to $\left((x_i,y_i)\right)_{i\in I}$ such that, for every $j\in I$, we get
$$
\left(\frac{f_j(x_i)-f_j(y_i)}{d(x_i,y_i)}\right)_{i\in I}=\left(\delta_{ji}\right)_{i\in I}
$$
and 
$$
\sup_{(x,y)\in\widetilde{X}}\sum_{i\in I}\frac{\left|f_i(x)-f_i(y)\right|}{d(x,y)}\leq M.
$$
Define $R_E\colon\ell_\infty(I,E)\to\Lipo(X,E)$ by 
$$
R_E(\alpha)(x):=\sum_{i\in I}f_i(x) \alpha_i\qquad (x\in X,\; \alpha=(\alpha_i)_{i\in I}\in\ell_\infty(I,E)).
$$
An easy calculation shows that $R_E\in\L(\ell_\infty(I,E),\Lipo(X,E))$ with $||R_E||\leq M$ and $T_E\circ R_E=\Id_{\ell_\infty(I,E)}$, and thus $(ii)$ holds. The converse $(ii)\Rightarrow(i)$ is obvious taking $E=\mathbb R$ by Theorem~\ref{rem:levanvarapouloset}.
\end{proof}

Next we prove that, in order to get that a set $\left((x_i,y_i)\right)_{i\in I}$ in $\widetilde{X}$ is a Lipschitz interpolating set for $\Lipo(X,E)$ for every Banach space $E$, the existence of a Beurling set of functions in $\Lipo(X)$ for $\left((x_i,y_i)\right)_{i\in I}$, apart from being sufficient, is also a necessary condition. In order to do so, let us make use of tensor product theory.

Given a pointed metric space $X$, assume that $\left((x_i,y_i)\right)_{i\in I}\subseteq\widetilde{X}$ is a Lipschitz interpolating set for $\Lipo(X,E)$ for every Banach space $E$ with Lipschitz interpolating constant equal to $M_E=M$ (as in the thesis of Proposition~\ref{teo-2}). Given any Banach space $E$, let $T_{E^*}:\Lipo(X,E^*)\to \ell_\infty(I,E^*)$ denote be the Lipschitz interpolating operator associated to $(\left((x_i,y_i)\right)_{i\in I},E^*)$. Consider the identifications 
$$
\Lipo(X,E^*)\cong \mathcal L(\mathcal F(X),E^*)\cong (\mathcal F(X)\pten E)^*.
$$
We claim that the bounded linear operator $R_E:\ell_1(I,E)\to \mathcal F(X)\pten E$, defined by
$$
R_E(v):=\sum_{i\in I} m_{x_i,y_i}\otimes v_i\qquad (v=(v_i)_{i\in I}\in\ell_1(I,E)),
$$
satisfies that $(R_E)^*=T_{E^*}$. Indeed, given $f\in\Lipo(X,E^*)$ and $v=(v_i)_{i\in I}\in\ell_1(I,E)$, we get
\begin{align*}
(R_E)^*(T_f)(v)&=T_f(R_E(v)) =T_f\left( \sum_{i\in I} m_{x_i,y_i}\otimes v_i\right)=\sum_{i\in I}T_f(m_{x_i,y_i})(v_i)\\
&=\underbrace{\left(\frac{f(x_i)-f(y_i)}{d(x_i,y_i)}\right)_{i\in I}}\limits_{\in\ell_\infty(I,E^*)=\ell_1(I,E)^*}\underbrace{\left(v_i\right)_{i\in I}}\limits_{\in\ell_1(I,E)}
=T_{E^*}(f)(v),
\end{align*}
and the arbitrariness of $v$ and $f$ implies $T_{E^*}=(R_E)^*$. By Proposition~\ref{prop:caralipsconstant}, we get that $R_E:\ell_1(I,E)\to\F(X)\pten E$ is an into isomorphism and 
$$
\frac{1}{M}=\max\left\{J>0\colon J\Vert v\Vert\leq\Vert R_E(v)\Vert,\; \forall v\in\ell_1(I,E)\right\}.
$$
Now, \cite[Example 2.6]{ryan} establishes that $Q_E\colon\ell_1(I)\pten E\to\ell_1(I,E)$ given by 
$$
Q_E(\lambda\otimes x):=(\lambda_i x)_{i\in I}\qquad (\lambda=(\lambda_i)_{i\in I}\in\ell_1(I),\; x\in X),
$$
is an onto linear isometry, and this implies that 
$$
\frac{1}{M}=\max\left\{J>0\colon J\Vert z\Vert\leq \Vert (R_E\circ Q_E)(z)\Vert,\; \forall z\in\ell_1(I)\pten E \right\}.
$$
Finally, if we consider the operator $S\in\L(\ell_1(I),\mathcal F(X))$ defined by 
$$
S(\lambda):=\sum_{i\in I}\lambda_i m_{x_i,y_i}\qquad (\lambda=(\lambda_i)_{i\in I}\in\ell_1(I)),
$$ 
then \cite[Proposition 2.3]{ryan} provides the operator $S\otimes_\pi\mathrm{Id}_E\colon\ell_1(I)\pten E\to\F(X)\pten E$ given as 
$$
(S\otimes_\pi\mathrm{Id}_E)(\lambda\otimes x):=S(\lambda)\otimes x\qquad (\lambda\in\ell_1(I),\; x\in E).
$$
It is immediate that $S\otimes_\pi\Id_E=R_E\circ Q_E$. Indeed, given $\lambda\in\ell_1(I)$ and $x\in E$, we get
\begin{align*}
(R\circ Q_E)(\lambda\otimes x)&=R((\lambda_i x)_{i\in I})=\sum_{i\in I} m_{x_i,y_i}\otimes (\lambda_i x)=\sum_{i\in I}\lambda_i m_{x_i,y_i}\otimes x\\
&=\left(\sum_{i\in I}\lambda_i m_{x_i,y_i}\right)\otimes x=S(\lambda)\otimes x=(S\otimes_\pi\Id_E)(\lambda\otimes x).
\end{align*}
It follows that $\ell_1(I)\otimes E\subseteq\ker(S\otimes_\pi\Id_E-R_E\circ Q_E)$, and since the linear span of $\ell_1(I)\otimes E$ is dense in $\ell_1(I,E)\pten E$, we conclude that $S\otimes_\pi\Id_E=R_E\circ Q_E$. All this argument proves the following result.

\begin{proposition}\label{prop:tensorpredual}
Let $X$ be a pointed metric space, let $E$ be a Banach space and let $\left((x_i,y_i)\right)_{i\in I}$ be a set in $\widetilde{X}$. The following are equivalent:
\begin{enumerate}
    \item $\left((x_i,y_i)\right)_{i\in I}$ is a Lipschitz interpolating set for $\Lipo(X,E^*)$ and its Lipschitz interpolating constant is $M_{E^*}$.
    \item The operator $S\otimes_\pi\Id_E:\ell_1(I)\pten E\to\F(X)\pten E$ is an into isomorphism and 
    $$\frac{1}{M}=\max\left\{J>0\colon J\Vert z\Vert\leq \Vert (S\otimes_\pi\Id_{E})(z)\Vert,\; \forall z\in\ell_1(I)\pten E\right\}.$$ $\hfill$ $\Box$
\end{enumerate}
\end{proposition}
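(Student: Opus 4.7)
The plan is to verify the equivalence via a three-step chain that mirrors the discussion immediately preceding the statement: first identify $T_{E^*}$ as an adjoint, then apply the duality between being bounded below and being quotient-surjective from Proposition~\ref{prop:caralipsconstant}, and finally transport the conclusion via the canonical identification $\ell_1(I)\pten E\cong\ell_1(I,E)$.

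First, using the chain of isometric isomorphisms $\Lipo(X,E^*)\cong\L(\F(X),E^*)\cong(\F(X)\pten E)^*$ and $\ell_\infty(I,E^*)\cong\ell_1(I,E)^*$, I would introduce the bounded linear operator $R_E\colon\ell_1(I,E)\to\F(X)\pten E$ defined by $R_E(v):=\sum_{i\in I}m_{x_i,y_i}\otimes v_i$ and check by direct computation on a generic $v=(v_i)_{i\in I}$ and a generic $f\in\Lipo(X,E^*)$ that $(R_E)^*=T_{E^*}$, exactly as performed in the paragraph preceding the statement. Proposition~\ref{prop:caralipsconstant} then gives immediately that $T_{E^*}$ is surjective with Lipschitz interpolating constant $M_{E^*}$ if and only if $R_E$ is an into isomorphism whose sharpest lower constant is exactly $1/M_{E^*}$.

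Next, I would transport this characterisation to the projective tensor product via the canonical onto linear isometry $Q_E\colon\ell_1(I)\pten E\to\ell_1(I,E)$ of \cite[Example 2.6]{ryan} defined by $Q_E(\lambda\otimes x)=(\lambda_ix)_{i\in I}$. Being an isometry, $Q_E$ does not alter the constants, so $R_E$ is an into isomorphism with optimal lower constant $1/M_{E^*}$ if and only if the composition $R_E\circ Q_E$ enjoys the same property with the same constant.

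Finally, I would verify $R_E\circ Q_E=S\otimes_\pi\Id_E$, where $S\colon\ell_1(I)\to\F(X)$ is the operator from Proposition~\ref{prop-new}. On an elementary tensor $\lambda\otimes x$ both sides send it to $S(\lambda)\otimes x=\sum_{i\in I}\lambda_im_{x_i,y_i}\otimes x$; by continuity of both operators and norm-density of $\ell_1(I)\otimes E$ in $\ell_1(I)\pten E$, the equality extends to the whole projective tensor product. Combining the three steps yields (i)$\Leftrightarrow$(ii). The only real obstacle is the bookkeeping required to keep the dualities $(\F(X)\pten E)^*\cong\L(\F(X),E^*)\cong\Lipo(X,E^*)$ and $\ell_1(I,E)^*\cong\ell_\infty(I,E^*)$ consistent when computing $(R_E)^*$; no new idea beyond the manipulations already displayed in the paragraphs above the statement is needed.
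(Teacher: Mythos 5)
Your proposal is correct and coincides with the paper's own argument: the paper likewise proves this proposition by showing $(R_E)^*=T_{E^*}$ under the identifications $\Lipo(X,E^*)\cong(\F(X)\pten E)^*$ and $\ell_\infty(I,E^*)\cong\ell_1(I,E)^*$, invoking Proposition~\ref{prop:caralipsconstant}, and then transporting constants through the isometry $Q_E$ after checking $S\otimes_\pi\Id_E=R_E\circ Q_E$ on elementary tensors and extending by density. No gaps.
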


Now we are able the present an extension of Proposition~\ref{teo-2} in the following sense.


\begin{theorem}\label{theo:mainvectorval}
Let $\left((x_i,y_i)\right)_{i\in I}$ be a Lipschitz interpolating set in $\widetilde{X}$ whose Lipschitz interpolating constant is $M$. The following are equivalent:
\begin{enumerate}
\item $\left((x_i,y_i)\right)_{i\in I}$ is a Lipschitz interpolating set for $\Lipo(X,E)$ for every Banach space $E$ and the Lipschitz interpolating constant $M_E=M$.
\item $\left((x_i,y_i)\right)_{i\in I}$ has a Beurling set $(f_i)_{i\in I}$ of functions in $\Lipo(X)$.
\end{enumerate}
\end{theorem}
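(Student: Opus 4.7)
The direction (ii) $\Rightarrow$ (i) is already contained in Proposition~\ref{teo-2}: the Beurling set produces, for every $E$, an operator $R_E$ with $\|R_E\|\leq M$ and $T_E\circ R_E=\mathrm{Id}$, forcing $M_E\leq M$; the reverse inequality $M_E\geq M$ is automatic, since composing any solution of the vector-valued problem with a norm-one functional on $E$ yields a solution of the scalar problem. Hence the real task is (i) $\Rightarrow$ (ii). My plan is to feed hypothesis (i) a single Banach space, namely $E=\ell_1(I)$, and, through a $w^*$-compactness argument, produce the map $f\in\Lipo(X,\ell_1(I))$ demanded by condition (iv) of Theorem~\ref{cor:caravarapoulos}; the equivalence proved there will then deliver a Beurling set.

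First I would consider $v=(e_i)_{i\in I}\in\ell_\infty(I,\ell_1(I))$, which has $\|v\|_\infty=1$ since each canonical basis vector $e_i$ is a unit vector in $\ell_1(I)$. Hypothesis (i) with $E=\ell_1(I)$, together with $M_{\ell_1(I)}=M$, provides, for every $n\in\N$, a map $f_n\in\Lipo(X,\ell_1(I))$ such that $\|f_n\|\leq M+1/n$ and
$$
\frac{f_n(x_i)-f_n(y_i)}{d(x_i,y_i)}=e_i \qquad (i\in I).
$$
To extract a limit, I would exhibit $\Lipo(X,\ell_1(I))$ as a dual Banach space by combining the duality $\ell_1(I)=c_0(I)^*$ with the universal property of $\F(X)$ and the tensor-product identification recalled in Section~\ref{section:prel}:
$$
\Lipo(X,\ell_1(I))\cong\L(\F(X),c_0(I)^*)\cong(\F(X)\pten c_0(I))^*.
$$
Banach--Alaoglu then produces a $w^*$-cluster point $f$ of $(f_n)$, and the $w^*$-lower semicontinuity of the norm, combined with the bound $\|f_n\|\leq M+1/n$ on every tail, forces $\|f\|\leq M$.

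The main obstacle is to show that the interpolation identity survives the $w^*$-limit, and this is precisely the place where the tensor-product identification pays off. For each pair $i,j\in I$, the scalar-valued map
$$
g\mapsto \langle T_g(m_{x_i,y_i}),e_j\rangle\qquad(g\in\Lipo(X,\ell_1(I)))
$$
is the evaluation at the element $m_{x_i,y_i}\otimes e_j\in\F(X)\pten c_0(I)$, and hence is $w^*$-continuous on the dual $\Lipo(X,\ell_1(I))$. Along $(f_n)$ it takes the constant value $\delta_{ij}$, so at the cluster point $f$ it still equals $\delta_{ij}$. Letting $j$ vary over $I$ and using that $c_0(I)$ separates the points of $\ell_1(I)$ yields $T_f(m_{x_i,y_i})=e_i$ for every $i\in I$. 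Thus $f$ satisfies condition (iv) of Theorem~\ref{cor:caravarapoulos} in the $\|f\|\leq M$ form recorded in Remarks~\ref{rem-to-cor:caravarapoulos}(i), and the equivalence established there supplies the Beurling set of functions in $\Lipo(X)$ associated to $\left((x_i,y_i)\right)_{i\in I}$, completing the proof.
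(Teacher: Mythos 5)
Your proof is correct, and for the substantive implication $(i)\Rightarrow(ii)$ it takes a genuinely different route from the paper's. Both arguments specialise hypothesis $(i)$ to $E=\ell_1(I)$ and both rest on the identification $\Lipo(X,\ell_1(I))\cong\L(\F(X),c_0(I)^*)\cong(\F(X)\pten c_0(I))^*$, but they diverge in how they manufacture a witness for Theorem~\ref{cor:caravarapoulos}. The paper aims at item $(iii)$: it invokes Proposition~\ref{prop:tensorpredual} to see that $S\otimes_\pi\Id_{c_0(I)}$ is bounded below with constant $1/M$, transports the trace functional $\varphi$ through the inverse on the range, extends it by Hahn--Banach to $\Phi$ with $\Vert\Phi\Vert\leq M$, and reads off the projection $P\in\L(\F(X),\ell_1(I))$ with $P\circ S=\Id_{\ell_1(I)}$. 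You aim instead at item $(iv)$: you interpolate the single element $(e_i)_{i\in I}\in S_{\ell_\infty(I,\ell_1(I))}$ with constants $M+1/n$ and extract a $w^*$-cluster point $f$ in the dual ball, using $w^*$-continuity of evaluation at the elementary tensors $m_{x_i,y_i}\otimes e_j$ to preserve the identity $T_f(m_{x_i,y_i})=e_i$ and $w^*$-closedness of balls to force $\Vert f\Vert\leq M$; both routes then appeal to Remarks~\ref{rem-to-cor:caravarapoulos}(i) to work with $\leq M$ rather than $=M$. Your argument is somewhat more economical: it bypasses Proposition~\ref{prop:tensorpredual} and the trace-functional construction entirely, and it makes visible that only the interpolability of the one target $(e_i)_{i\in I}$ with constants arbitrarily close to $M$ is used, not the full surjectivity of $T_{\ell_1(I)}$. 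What the paper's route buys in exchange is a purely ``soft'' duality argument with no limit extraction, together with Proposition~\ref{prop:tensorpredual} as a quantitative statement of independent interest.
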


\begin{proof}
$(ii)\Rightarrow(i)$ clearly follows from Proposition \ref{teo-2}. 

Conversely, if we apply $(i)$ to $E=\ell_1(I)$, we get that $T:\Lipo(X,\ell_1(I))\to \ell_\infty(I,\ell_1(I))$ is a Lipschitz interpolating operator associated to $\left((x_i,y_i)\right)_{i\in I}$ and its Lipschitz interpolating constant is $M$. By Proposition~\ref{prop:tensorpredual}, we get that the operator $S\otimes_\pi\mathrm{Id}_{c_0(I)}\colon\ell_1(I)\pten c_0(I)\to\F(X)\pten c_0(I)$ is an into isomorphism and
$$
\frac{1}{M}=\max\{J>0\colon J\Vert z\Vert \leq \Vert (S\otimes_\pi\mathrm{Id}_{c_0(I)})(z)\Vert,\;\forall z\in\ell_1(I)\pten c_0(I)\}.
$$
Set $\varphi:\ell_1(I)\pten c_0(I)\to \mathbb R$ given by the equation
$$
\varphi(\lambda\otimes v):=\sum_{i\in I}\lambda_i v_i\qquad (\lambda=(\lambda_i)_{i\in I}\in\ell_1(I),\; v=(v_i)_{i\in I}\in c_0(I)).
$$
It is a norm-one linear functional (observe $\varphi$ is the identity operator under the identification $(\ell_1(I)\pten c_0(I))^*\cong\mathcal L(\ell_1(I),\ell_1(I))$). Consider $\varphi \circ (S\otimes_\pi  \mathrm{Id}_{c_0(I)})^{-1}: (S\otimes_\pi \mathrm{Id}_{c_0(I)})(\ell_1(I)\pten c_0(I))\to \mathbb R$, which is a functional of norm less or equal than $M$ since $\Vert (S\otimes_\pi \mathrm{Id}_{c_0(I)})^{-1}\Vert=M$. Since $(S\otimes_\pi \mathrm{Id}_{c_0(I)})(\ell_1(I)\pten c_0(I))$ is a subspace of $\mathcal F(X)\pten c_0(I)$, Hahn--Banach theorem provides another continuous linear functional $\Phi\colon\mathcal F(X)\pten c_0(I)\to\mathbb R$ with $\Vert\Phi\Vert\leq M$ and $\Phi(S(\lambda)\otimes v)=\varphi(\lambda\otimes v)$ for all $\lambda\otimes v\in \ell_1(I)\pten c_0(I)$. Let us call $P:\mathcal F(X)\to \ell_1(I)$ the operator defined by
$$
P(\gamma)(v):=\Phi(\gamma\otimes v)\qquad (\gamma\in\F(X),\; v\in c_0(I)).
$$
It is well known (c.f. e.g. \cite[p. 24]{ryan}) that $\Vert P\Vert=\Vert \Phi\Vert\leq M$. 

It remains to prove that $P\circ S=\mathrm{Id}_{\ell_1(I)}$. In order to do so, let $\lambda=(\lambda_i)_{i\in I}\in\ell_1(I)$. Now, given $v=(v_i)_{i\in I}\in c_0(I)$, we infer that 
$$
(P\circ S)(\lambda)(v)=P(S(\lambda))(v)=\Phi(S(\lambda)\otimes v)=\varphi(\lambda\otimes v)=\sum_{i\in I}\lambda_iv_i=\mathrm{Id}_{\ell_1(I)}(\lambda)(v).
$$
The arbitrariness of $v\in c_0(I)$ and $\lambda\in\ell_1(I)$ implies in turn $P\circ S=\mathrm{Id}_{\ell_1(I)}$, and $(ii)$ follows by Theorem \ref{rem-to-cor:caravarapoulos} and Remarks \ref{cor:caravarapoulos} (i).
\end{proof}

\begin{remark}\label{rema:caraprevaravectorvalued}
In view of Theorem \ref{theo:mainvectorval}, Question~\ref{quest:varapoulos} can be equivalently reformulated in the following terms: given a Lipschitz interpolating set $\left((x_i,y_i)\right)_{i\in I}$ in $\widetilde{X}$ for $\Lipo(X)$ whose Lipschitz interpolation constant is $M$, is it true that $\left((x_i,y_i)\right)_{i\in I}$ is a Lipschitz interpolating set for $\Lipo(X,E)$ with Lipschitz interpolation constant $M_E=M$ for every Banach space $E$?
\end{remark}

\section*{Acknowledgments} The authors thank Ram\'on J. Aliaga for fruitful conversations on the topic. Part of the research of the topic was developed during the participation of the second author to the congress “Structures in Banach Spaces” which took place at the Erwin Schr\"odinger International Institute for Mathematics and Physics (ESI) of the University of Vienna, institution which supported part of the expenses. The second author is grateful to the ESI for the financial support received.

The research was supported by Ministerio de Ciencia e Innovaci\'{o}n grant PID2021-122126NB-C31 funded by MICIU/AEI/10.13039/501100011033 and by ERDF/EU. The first author was also supported by Junta de Andaluc\'ia grant FQM194 and by P\_FORT\_GRUPOS\_2023/76, PPIT-UAL, Junta de Andaluc\'ia- ERDF 2021-2027. Programme: 54.A. The second author was also supported by Junta de Andaluc\'ia: grant FQM-0185 and by Fundaci\'on S\'eneca: ACyT Regi\'on de Murcia: grant 21955/PI/22.


\begin{thebibliography}{99}
\bibitem{AliPer-20} R. J. Aliaga and E. Perneck\'a, Supports and extreme points in Lipschitz-free spaces, Rev. Mat. Iberoam. \textbf{36} (2020), no. 7, 2073--2089.
\bibitem {AliRZ-20} R. J. Aliaga and A. Rueda Zoca, Points of differentiability of the norm in Lipschitz-free spaces, J. Math. Anal. Appl. \textbf{489} (2020), article 124171. 
\bibitem{Ate-92} K. R. M. Attele, Interpolating sequences for the derivatives of the Bloch functions, Glasg. Math. J. \textbf{34} (1992) 35--41.
\bibitem {amr23} A. Avil\'es, G. Mart\'inez-Cervantes and A. Rueda Zoca, $L$-orthogonal elements and $L$-orthogonal sequences, Int. Math. Res. Not. \textbf{Vol 2023}, Nº 11 (2023), 9128--9154.
\bibitem{amrt24} A. Avil\'es, G. Mart\'inez-Cervantes, A. Rueda Zoca and P. Tradacete, Infinite dimensional spaces in the set of strongly norm-attaining Lipschitz maps, Rev. Mat. Iberoam. \textbf{40}, 1 (2024), 189--200.
\bibitem{BlaGalLinMir-19} O. Blasco, P. Galindo, M. Lindstr\"om and A. Miralles, Interpolating sequences for weighted spaces of analytic functions on the unit ball of a Hilbert space, Rev. Mat. Compl. \textbf{32}, no. 1, (2019) 115--139.
\bibitem{Car-58} L. Carleson, An interpolation problem for bounded analytic functions. Am. J. Math. \textbf{80} (1958) 851--866.
\bibitem{CasALL-20} B. Cascales, R. Chiclana, L. C. Garc\'ia-Lirola, M. Mart\'in and A. Rueda Zoca, On strongly norm attaining Lipschitz maps, J. Funct. Anal. \textbf{277} (2019), no. 6, 1677--1717.
\bibitem{dalet15} A. Dalet, Free spaces over countable compact metric spaces, Proc. Amer. Math. Soc. \textbf{143}, 8 (2015), 3537--3546.
\bibitem{checos} M. Fabian, P. Habala, P. H\'ajek, V. Montesinos, J. Pelant and V. Zizler, Functional Analysis and Infinite-Dimensional Geometry, CMS Books in Mathematics. Springer-Velag New York 2001.
\bibitem{GalGamLin-04} P. Galindo, T. W. Gamelin and M. Lindstr\"om, Composition operators on uniform algebras and the pseudohyperbolic metric, J. Korean Math. Soc. \textbf{41} (2004), 1--20.
\bibitem{GalLinMir-09} P. Galindo, M. Lindstr\"om and A. Miralles, Interpolating sequences on uniform algebras, Topology \textbf{48} (2009) 111--118.
\bibitem{Gar-81} J. B. Garnett, Bounded Analytic Functions, Academic Press, New York, London, 1981.
\bibitem{GodKal-03} G. Godefroy and N. J. Kalton, Lipschitz-free Banach spaces, Studia Math. \textbf{159} (2003), 121--141.
\bibitem{jr22} M. Jung and A. Rueda Zoca, Daugavet points and $\Delta$-points in Lipschitz free spaces, Studia Math. \textbf{265}, 1 (2022), 37--55.
\bibitem{Kro-76} E. P. Kronstadt, Interpolating sequences for functions satisfying a Lipschitz condition, Pacific J. Math. \textbf{63} (1976), no. 1, 169--177.
\bibitem{Lan-93} S. Lang, Real and functional analysis, Grad. Texts in Math., 142 Springer-Verlag, New York, 1993, xiv+580 pp.
\bibitem{MadMat-95} K. Madigan and A. Matheson, Compact composition operators on the Bloch space, Trans. Amer. Math. Soc. \textbf{347} (1995), no. 7, 2679--2687.
\bibitem{MirMal-22} A. Miralles and M. P. Maletzki, On interpolating sequences for Bloch type spaces, J. Math. Anal. Appl. \textbf{511} (2022), no. 1, Paper No. 126079, 6 pp.
\bibitem{Muj-91} J. Mujica, Linearization of holomorphic mappings on infinite-dimensional spaces. X Latin American School of Mathematics (Spanish) (Tanti, 1991). Rev. Un. Mat. Argentina \textbf{37} (1991), no. 1--2, 127--134 (1992). 
\bibitem{OstOst-20} S. Ostrovskaa and M. I. Ostrovskii, On relations between transportation cost spaces and $\ell_1$, J. Math. Anal. Appl. \textbf{491} (2020) 124338.
\bibitem{OstOst-24} S. Ostrovskaa and M. I. Ostrovskii, Complementability of isometric copies of $\ell_1$ in transportation cost spaces, J. Math. Anal. Appl. \textbf{529} (2024), no. 2, Paper No. 127234, 12 pp.
\bibitem {ryan} R.~A.~Ryan, Introduction to tensor products of Banach spaces, Springer Monographs in Mathematics, Springer-Verlag, London, 2002.
\bibitem{Sei-04} K. Seip, Interpolation and sampling in spaces of analytic functions, University Lecture Series, 33. American Mathematical Society, Providence, RI, 2004.
\bibitem{Wea-18} N. Weaver, Lipschitz algebras, Second edition. World Scientific Publishing Co. Pte. Ltd., Hackensack, NJ, 2018.
\end{thebibliography}
\end{document}